\DeclareMathOperator{\dx}{dx}
\DeclareMathOperator{\tr}{tr}
\newcommand{\vertiii}[1]{{\left\vert\kern-0.25ex\left\vert\kern-0.25ex\left\vert #1
        \right\vert\kern-0.25ex\right\vert\kern-0.25ex\right\vert}}
\spnewtheorem{assumption}{Assumption}[section]{\bf}{\rm}
\journalname{BIT}
\newcommand*{\affaddr}[1]{#1} 
\newcommand*{\affmark}[1][*]{\textsuperscript{#1}}
\begin{document}

\title{Augmented Lagrangian preconditioners for the Oseen--Frank model of nematic and cholesteric liquid crystals\thanks{This work is supported by the National University of Defense Technology, the EPSRC Centre for Doctoral Training in Partial Differential Equations [grant number EP/L015811/1], the EPSRC Centre for Doctoral Training in Industrially Focused Mathematical Modelling [grant number EP/L015803/1] in collaboration with London Computational Solutions, and by the Engineering and Physical Sciences Research Council [grant numbers EP/R029423/1 and EP/V001493/1].}}

\titlerunning{AL preconditioners for Oseen--Frank}        

\author{Jingmin Xia \affmark[1] \and
    Patrick E.~Farrell \affmark[1] \and
    Florian Wechsung \affmark[2]
}

\authorrunning{J.~Xia et al.} 

\institute{J.~Xia \at
              \email{jingmin.xia@maths.ox.ac.uk}           
           \and
           P.~E.~Farrell \at
              \email{patrick.farrell@maths.ox.ac.uk}           
           \and
           F.~Wechsung \at
              \email{wechsung@nyu.edu}            \\
               \affaddr{\affmark[1]Mathematical Institute, University of Oxford, Oxford, UK}\\
                \affaddr{\affmark[2]Courant Institute of Mathematical Sciences, New York University, New York, USA}\\
}

\date{Received: date / Accepted: date}

\maketitle

\begin{abstract}
	We propose a robust and efficient augmented Lagrangian-type preconditioner
for solving linearizations of the Oseen--Frank model arising in nematic and cholesteric liquid crystals.
    By applying the augmented Lagrangian method, the Schur complement of the director block
    can be better approximated by the weighted mass matrix of the Lagrange multiplier,
    at the cost of making the augmented director block harder to solve.
    In order to solve the augmented director block, we develop a robust multigrid algorithm which includes an additive Schwarz relaxation that captures a pointwise version of the kernel of the semi-definite term.
    Furthermore, we prove that the augmented Lagrangian term improves the discrete enforcement of the unit-length constraint.
    Numerical experiments verify the efficiency of the algorithm and its robustness with respect to problem-related
    parameters (Frank constants and cholesteric pitch) and the mesh size.
\keywords{
Augmented Lagrangian \and Oseen--Frank \and Cholesteric liquid crystal \and Preconditioning \and Robust algorithms \and Multigrid}
\subclass{76A15 \and 65N55 \and 65N30 \and 65F08}
\end{abstract}

\section{Introduction}

Liquid crystals (LC), first discovered by Reinitzer in 1888 \cite{reinitzer},
are materials that can exist in an intermediate mesophase between isotropic
liquids and solid crystals:
they can flow like liquids while also possessing long-range orientational order.
Based on different ordering symmetries, Friedel \cite{friedel} proposed to classify them into three broad categories: \textit{nematic}, \textit{smectic} and \textit{cholesteric}.
The nematic phase is the simplest and most extensively studied form of LC, where the molecules locally tend to align in one preferred direction, described in this work by a director field $\mathbf{n}: \Omega \to \mathbb{R}^3$. 
In the smectic phase, the molecules exhibit orientational order but also organize themselves into well-defined layers that can slide over each other.
In the cholesteric phase, also referred as the \textit{chiral nematic} phase, the molecules are arranged in layers, each of which is rotated with a fixed angle relative to the previous one. The distance over which the layers rotate by $2\pi$ is referred to as the cholesteric pitch $q_0$. A nonzero parameter $q_0$ indicates chirality, while a zero value of $q_0$ represents a nematic phase.
Since the orientational properties of LC can be manipulated by imposing electric fields,
they are often used to control light and have formed the basis of several important technologies in the area of display devices.
Several thorough overviews on LC modeling and its history can be found in \cite{ball-2017-article, stewart-2004-book, chand-1992-book}.

There are several models describing LC, e.g., the Oseen--Frank, Ericksen and Landau--de Gennes theories.
The Oseen--Frank model \cite{frank-1958-article, oseen-1933-article} is commonly used for the equilibrium orientation of liquid crystals.
It employs a director $\mathbf{n}: \Omega \to \mathbb{R}^3$ as the state variable and minimizes a free energy functional.
By definition, the director is a unit vector denoting the average orientation of the molecules in a fluid element at a point
and headless in the sense that $\mathbf{n}$ and $-\mathbf{n}$ are indistinguishable. The free energy functional depends on
Frank constants that describe the relative energetic costs of various kinds of distortions.
We refer to \cite{ericksen-1991-article, gennes-book} for other continuum models such as the Ericksen and the Landau--de Gennes models.
In this work, we will focus on the continuum Oseen--Frank theory.
The key difficulty is that enforcing the unit-length constraint $\mathbf{n}\cdot\mathbf{n} = 1$ with a Lagrange multiplier leads to a saddle-point system,
which poses challenges because of its poor spectral properties.
Several classical techniques regarding the solution of saddle-point problems are reviewed and illustrated in \cite{benzi-2005-article}.

There are several existing works concerning preconditioners for Oseen--Frank models of nematic LC.
For the saddle-point structure of harmonic maps (arising when all Frank constants are equal), Hu et al.\ \cite{hu-2009-article} propose to use a block-diagonal preconditioner,
consisting of a symmetric and spectrally equivalent multigrid operator and a discrete Laplacian operator.
Ramage and Gartland \cite{ramage-2013-article} consider the case of an electrically coupled equal-constant nematic LC
and combine a discretize-then-optimize approach with projection onto the nullspace of the discrete
constraint to reduce the size of the linear system. The projected problem is then preconditioned with a block-diagonal
preconditioner.
Furthermore, a number of other preconditioners are discussed and analyzed in \cite{beik-benzi-2018-article, beik-benzi-2018b-article}
for double saddle-point systems arising in both potential fluid flows and electric-field coupled nematic LC.
Concerning the double saddle-point structure, a class of Uzawa-type methods, which can be interpreted as generalized Gauss--Seidel methods,
and an augmented Lagrangian technique are studied in \cite{benzi-beik-2018-article}.
It is shown that the applied augmented Lagrangian form is mesh-independent and the performance of the iteration can be improved by increasing the value of $\gamma$.
These references also apply the discretize-then-optimize approach to tackle the pointwise unit-length vector constraint.
In this paper, we will employ the optimize-then-discretize strategy and enforce the unit-length constraint on the continuous level.
As an alternative to block preconditioning strategies, monolithic multigrid methods for the nematic problem have been proposed using Vanka \cite{adler-2015b-article} and Braess--Sarazin \cite{adler-2016-article} relaxation.

There is less work on preconditioning for cholesteric LC.
A damped Newton method with LU decomposition was applied to the bifurcation analysis of cholesteric problem in \cite{emerson-2018-article} with good results,
but no discussion of preconditioners is presented.

In this paper, we propose to enforce the unit-length constraint with an augmented Lagrangian approach
to help control the Schur complement arising in the saddle-point system.
When combined with specialized multigrid schemes, augmented Lagrangian strategies can yield 
scalable, mesh-independent, and parameter-robust preconditioners.
A notable success is the development of Reynolds-robust solvers for the
two- \cite{benzi-2006-article, olshanskii-2009-article} and three-dimensional \cite{farrell-mitchell-2018-article} stationary Navier--Stokes equations.

This success motivates the investigation of whether similar ideas can underpin robust solvers in the LC case.

The main contribution of this work is the development of a robust multigrid solver for the augmented director block and an effective Schur complement approximation for the linearization of the cholesteric Oseen--Frank equations.
The robust multigrid strategy is motivated by the general theory of Sch\"oberl and Lee et al.~\cite{lee-2007-article, schoberl-1999-phd-thesis, schoberl-1999-article}.
We develop a multigrid relaxation scheme that captures an approximation to the kernel of the semi-definite augmentation term and account for this approximation in the spectral analysis.
Furthermore, a proof of the improvement of the discrete constraint is given and verified numerically.
A key difference to previous applications of these ideas in linear elasticity and the Navier--Stokes
equations is that the constraint to be imposed on the director is nonlinear.

This paper is organized as follows.
The Oseen--Frank model is reviewed in Section \ref{sec:of} and the solvability of the discretized Newton linearizations is briefly analyzed. The augmented
Lagrangian strategy for enforcing the unit-length constraint is discussed. A Picard iteration is proposed for solving the augmented nonlinear equations.
We then give a theoretical justification of the continuous and discrete augmented Lagrangian stabilizations in Section \ref{sec:schur}.
This further leads to our choice of the approximation to the Schur complement matrix arising from the Picard iteration.
In Section \ref{sec:improv}, we prove that the augmented Lagrangian strategy improves the discrete enforcement of the constraint.
A robust multigrid algorithm for the augmented top-left block is discussed in Section \ref{sec:mg} which also includes a formal spectral analysis of our preconditioner with the property of the approximate kernel.
Numerical experiments in two-dimensional domains are reported in Section \ref{sec:numerical} to verify the effectiveness and robustness of our proposed augmented Lagrangian preconditioner.
Finally, some conclusions are presented in Section \ref{sec:conclusion}.

\section{Oseen--Frank model}\label{sec:of}

Let $\Omega \subset \mathbb{R}^d, d=\{2,3\}$ be an open, bounded domain with Lipschitz boundary $\partial\Omega$
and denote $\mathbf{H}^1_g(\Omega) = \{\mathbf{v} \in H^1(\Omega; \mathbb{R}^3): \mathbf{v}|_{\partial \Omega} = \mathbf{g}\}$
with a vector field $\mathbf{g}\in H^{1/2}(\partial\Omega;\mathbb{S}^2)$.
Here, $\mathbb{S}^2$ represents the surface of the unit ball centered at the origin.
Assume that the cholesteric LC occupying the domain $\Omega$ is equipped with a rigid anchoring (Dirichlet) boundary condition $\mathbf{n}|_{\partial\Omega}=\mathbf{g}$\footnote{The following theory also applies with mixed periodic and Dirichlet boundary conditions \cite{adler-2015-article, bedford-2014-phd}, which we shall use in some numerical examples.}.
The Oseen--Frank model (cf. \cite{frank-1958-article}) considers the following minimization problem:
\begin{equation}\label{eq:energy}
    \begin{aligned}
        & \underset{\mathbf{n} \in \mathbf{H}^1_g(\Omega)}{\min}~ J(\mathbf{n}) = \int_{\Omega} W(\mathbf{n})\dx,\\
        & \text{subject to}~ \mathbf{n}\cdot \mathbf{n} = 1 \text{ a.e.},
    \end{aligned}
\end{equation}
where the Frank energy density $W(\mathbf{n})$ is of the form
\begin{equation}\label{eq:densityk4}
    \begin{aligned}
        W(\mathbf{n}) =  \frac{K_1}{2} \left(\nabla \cdot \mathbf{n}\right)^2 &+ \frac{K_2}{2} \left(\mathbf{n} \cdot (\nabla \times \mathbf{n}) + q_0\right)^2  + \frac{K_3}{2} |\mathbf{n} \times (\nabla \times \mathbf{n})|^2\\
        &+ \frac{K_2+K_4}{2}[\tr((\nabla \mathbf{n})^2) - (\nabla \cdot\mathbf{n})^2],
    \end{aligned}
\end{equation}
where $\tr(\cdot)$ denotes the trace of a matrix, $K_i \in \mathbb{R}$ $(i=1,2,3,4)$ are elastic constants (called \textit{Frank constants}) and $q_0 \ge 0$ is the preferred pitch for the
cholesteric. $K_1$, $K_2$, $K_3$, and $K_4$ are referred to as the splay, twist,
bend, and saddle-splay constants, respectively.
Note here $\nabla\mathbf{n}$ is matrix-valued and $(\nabla\mathbf{n})^2$ denotes the matrix multiplication of the matrix $\nabla\mathbf{n}$ and itself.

If $K_1=K_2=K_3=K>0$ and $K_4=0$, the energy density \eqref{eq:densityk4} reduces to the so-called \textit{equal-constant} approximation, with energy density
    \begin{equation*}
        W(\mathbf{n}) = \frac{K}{2} \left[|\nabla \mathbf{n}|^2 + 2q_0 \mathbf{n}\cdot(\nabla\times\mathbf{n})+q_0^2\right],
    \end{equation*}
    which is a useful simplification to help us gain qualitative insight into more complex situations.

\begin{remark}
    When $q_0=0$, the energy density \eqref{eq:densityk4} corresponds to the nematic case.
    Furthermore, when combined with the equal-constant approximation,
    \eqref{eq:densityk4} reduces to
    \begin{equation}\label{eq:approxdens-n}
        W(\mathbf{n}) = \frac{K}{2} |\nabla \mathbf{n}|^2.
    \end{equation}
    With this free energy density, the solution to the minimization problem \eqref{eq:energy} is unique
    and is known as the \textit{harmonic map} from a two- or three-dimensional compact manifold to $\mathbb{S}^2$ \cite{lin-1989-article}.
    Some fast numerical algorithms for \eqref{eq:approxdens-n} have been proposed and tested in \cite{hu-2009-article}.
\end{remark}

The last term (the \textit{saddle-splay} term or the \emph{null Lagrangian}) in \eqref{eq:densityk4} can be dropped as its integral reduces to a surface integral, which is essentially a constant
if applying Dirichlet boundary conditions to the model, via the divergence theorem.
For mixed periodic and Dirichlet boundary conditions considered in Section \ref{sec:pbc}, we can verify directly that this saddle-splay energy vanishes.
Hence, for simplicity, it suffices to consider the following Frank energy density
\begin{equation*}
    W(\mathbf{n}) =  \frac{K_1}{2} \left(\nabla \cdot \mathbf{n}\right)^2 + \frac{K_2}{2} \left(\mathbf{n} \cdot (\nabla \times \mathbf{n}) + q_0\right)^2  + \frac{K_3}{2} |\mathbf{n} \times (\nabla \times \mathbf{n})|^2.
\end{equation*}

In this paper, we use a more compact form of the free energy \eqref{eq:energy} as in \cite{adler-2015-article, adler-2016-article} by introducing a symmetric dimensionless tensor
\begin{displaymath}
    \mathbf{Z} = \kappa \mathbf{n}\otimes \mathbf{n} + (\mathbf{I} - \mathbf{n}\otimes \mathbf{n})  = \mathbf{I} +(\kappa-1)\mathbf{n}\otimes \mathbf{n},
\end{displaymath}
where $\kappa = K_2/ K_3$ and $\mathbf{I}$ is the second-order identity tensor.
By the classical equality
\begin{equation}\label{eq:classic}
    |\nabla \times \mathbf{n}|^2 = \left(\mathbf{n}\cdot (\nabla\times \mathbf{n})\right)^2 + |\mathbf{n}\times (\nabla \times \mathbf{n})|^2,
\end{equation}
the original energy functional $J(\mathbf{n})$ can be written as
\begin{equation}\label{eq:comp-energy}
    \begin{aligned}
        J(\mathbf{n}) =\frac{1}{2} &\left[ K_1 \langle \nabla\cdot\mathbf{n},\nabla\cdot\mathbf{n}\rangle_0
        + K_3 \langle \mathbf{Z}\hspace{0.02cm}\nabla \times \mathbf{n} , \nabla \times \mathbf{n} \rangle_0 \right.\\
                                   & \left. + 2K_2 q_0 \langle \mathbf{n}, \nabla \times \mathbf{n}\rangle_0 + K_2 \langle q_0,q_0 \rangle_0 \right].
    \end{aligned}
\end{equation}
Here and throughout this work, $\langle \cdot, \cdot\rangle_0$ denotes the inner product in $L^2(\Omega)$ with its induced norm $\|\cdot\|_0$.
It can be observed that the auxiliary tensor $\mathbf{Z}$ contributes to the nonlinearity of $J(\mathbf{n})$ in \eqref{eq:comp-energy}.

\begin{remark}
    There is another widely used simplification of the energy density \eqref{eq:densityk4}, where $q_0=0$ and $K_2=K_3=K_1+K$, $K_4=-K$ \cite{glowinski-lin-2003-article, lin-richter-2007-article}.
    In this case, \eqref{eq:densityk4} becomes
    \begin{displaymath}
        W(\mathbf{n}) = \frac{1}{2} [K_1 |\nabla \mathbf{n}|^2 + K |\nabla \times \mathbf{n}|^2],
    \end{displaymath}
    and it is expected that as $K\rightarrow \infty$, the asymptotic behavior of minimizers provides a description of the phase transition process of
    LC from the nematic to the smectic-A phases \cite{glowinski-lin-2003-article, lin-richter-2007-article, lin-tai-2014-book}.
\end{remark}

Furthermore, it is proven in \cite[Section 2.3]{adler-2015-article} that $\mathbf{Z}$ is uniformly (with respect to $\mathbf{x}\in \Omega$) symmetric positive definite (USPD)
as long as sufficient control is maintained on $\mathbf{n}\cdot\mathbf{n}-1$.
This property of $\mathbf{Z}$ plays an essential role in proving the well-posedness of the saddle-point problem in the nematic case.
We restate the result of $\mathbf{Z}$ being USPD in the following, as it is important later:
\begin{lemma}\label{lem:zuspd}\cite[Section 2.3]{adler-2015-article}
    Assume $\alpha\le |\mathbf{n}|^2 \le \beta$~$\forall \mathbf{x}\in \Omega$ with $0<\alpha\le 1\le \beta$.
    If $\kappa>1$, then $\mathbf{Z}$ is USPD on $\Omega$;
    for $0<\kappa<1$, then $\mathbf{Z}$ is USPD on $\Omega$ if $\beta<\frac{1}{1-\kappa}$.
\end{lemma}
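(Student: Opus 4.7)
The plan is to exploit the rank-one structure of $\mathbf{Z}=\mathbf{I}+(\kappa-1)\mathbf{n}\otimes\mathbf{n}$ and read off its spectrum pointwise in $\mathbf{x}\in\Omega$. Symmetry is automatic since $\mathbf{n}\otimes\mathbf{n}$ is symmetric; the content is the uniform positive-definiteness. Any $\mathbf{v}\in\mathbb{R}^3$ decomposes as $\mathbf{v}=\mathbf{v}_\parallel+\mathbf{v}_\perp$, where $\mathbf{v}_\parallel$ is parallel to $\mathbf{n}(\mathbf{x})$ and $\mathbf{v}_\perp\cdot\mathbf{n}(\mathbf{x})=0$. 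A direct computation gives $\mathbf{Z}\mathbf{v}_\perp=\mathbf{v}_\perp$ and $\mathbf{Z}\mathbf{n}=(1+(\kappa-1)|\mathbf{n}|^2)\mathbf{n}$, so the eigenvalues of $\mathbf{Z}(\mathbf{x})$ are $1$ (with multiplicity two) and $\lambda(\mathbf{x}):=1+(\kappa-1)|\mathbf{n}(\mathbf{x})|^2$.

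I would then verify a uniform positive lower bound on $\lambda$ in the two regimes. If $\kappa>1$, then $(\kappa-1)|\mathbf{n}|^2\ge (\kappa-1)\alpha\ge 0$ pointwise, so $\lambda(\mathbf{x})\ge 1$ and $\mathbf{Z}$ is USPD with constant independent of $\mathbf{x}$. If $0<\kappa<1$, rewrite $\lambda(\mathbf{x})=1-(1-\kappa)|\mathbf{n}(\mathbf{x})|^2$; the assumption $|\mathbf{n}|^2\le\beta<\tfrac{1}{1-\kappa}$ yields the uniform bound $\lambda(\mathbf{x})\ge 1-(1-\kappa)\beta>0$. Combining with the eigenvalue $1$ in the orthogonal complement of $\mathbf{n}$, the Rayleigh quotient satisfies
\begin{equation*}
\mathbf{v}^\top\mathbf{Z}(\mathbf{x})\mathbf{v}\ge\min\bigl\{1,\;1-(1-\kappa)\beta\bigr\}|\mathbf{v}|^2
\end{equation*}
uniformly in $\mathbf{x}\in\Omega$, which is precisely the USPD property.

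No real obstacle arises here, since the lemma is essentially an eigenvalue computation for a rank-one perturbation of the identity; the only subtle point is that the lower bound on $|\mathbf{n}|^2$ in case $\kappa>1$ is not actually needed to obtain positivity (it is only needed if one wants a strictly larger lower bound), while in the case $0<\kappa<1$ the strict inequality $\beta<\tfrac{1}{1-\kappa}$ is essential to keep the eigenvalue $\lambda$ bounded away from zero.
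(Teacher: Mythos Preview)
Your argument is correct: the eigenvalue computation for the rank-one perturbation $\mathbf{Z}=\mathbf{I}+(\kappa-1)\mathbf{n}\otimes\mathbf{n}$ is exactly the right approach, and your case analysis on $\kappa$ is sound. Note that the paper does not actually supply its own proof of this lemma; it merely restates the result and cites \cite[Section 2.3]{adler-2015-article}, so there is nothing further to compare against.
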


\begin{remark}
    Notice that the regularity of $\mathbf{n}\in \mathbf{H}^1(\Omega)$ is enough for the functional $J(\mathbf{n})$ of \eqref{eq:comp-energy} to be well defined.
    In fact, $\mathbf{n}\in \mathbf{H}^1(\Omega)$ implies $\nabla\cdot\mathbf{n}$ and $\nabla \times\mathbf{n}$
    in $\mathbf{L}^2(\Omega)$.
    By \eqref{eq:classic}, $\mathbf{n}\cdot(\nabla \times \mathbf{n})\in L^2(\Omega)$.
    This ensures that the term $\langle q_0, \mathbf{n}\cdot (\nabla \times\mathbf{n})\rangle_0$ in \eqref{eq:comp-energy} is defined.
    Furthermore, Lemma \ref{lem:zuspd} gives the boundedness of $\mathbf{Z}$, which guarantees the $L^2$-regularity of the term $\mathbf{Z}\hspace{0.02cm}\nabla\times\mathbf{n}$ in \eqref{eq:comp-energy}.
\end{remark}

Naturally, the values of elastic constants and the cholesteric pitch will be an important factor in determining the minimizers.
In order to satisfy non-negativity of the energy density, i.e.,
\begin{displaymath}
    W(\mathbf{n}) \ge 0 \quad\forall \mathbf{n} \in \mathbf{H}^1_{g}(\Omega),
\end{displaymath}
we need additional assumptions on those constants. This gives rise to Ericksen's inequalities (see \cite{ball-2017-article, bedford-2014-phd} and references therein):
\begin{displaymath}
    \begin{aligned}
        &K_1,K_2,K_3 \ge 0, K_2+K_4=0 && \text{if}\ q_0\ne 0,\\
        & 2K_1\ge K_2+K_4, K_2\ge |K_4|, K_3\ge 0 && \text{if}\ q_0=0.
    \end{aligned}
\end{displaymath}
\begin{remark}
    We have included the inequalities with regard to constant $K_4$ here for generality,
    though they are not necessary in our work as we have eliminated the $K_4$-related term in the free energy.
    In this paper, we will simply consider $K_i>0$ ($i=1,2,3$) to avoid any technical issues.
\end{remark}

For the minimization problem \eqref{eq:energy} arising in (nematic or cholesteric) liquid crystals,
it has been proven in \cite[Theorem 2.1]{lin-1989-article} that there exists a solution.
\begin{theorem}\cite[Theorem 2.1]{lin-1989-article}
    Let $\Omega$ be a bounded Lipschitz domain and assume the Dirichlet boundary data $\mathbf{g}\in {H}^{1/2}(\partial\Omega;\mathbb{S}^2)$.
    If $K_1,K_2,K_3>0$, then there exists an $\mathbf{n}\in {H}_g^1(\Omega;\mathbb{S}^2)\coloneqq\{\mathbf{n}\in H^1(\Omega;\mathbb{S}^2):\mathbf{n}=\mathbf{g}\ \text{on }\partial\Omega\}$ such that
    \begin{displaymath}
        J(\mathbf{n}) = \underset{\mathbf{u}\in {H}^1_{g}(\Omega;\mathbb{S}^2)}{\mathrm{inf}}\ J(\mathbf{u}).
    \end{displaymath}
\end{theorem}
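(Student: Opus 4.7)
The plan is to apply the direct method of the calculus of variations: establish nonemptiness and finiteness of the infimum, extract a weakly convergent minimizing sequence via coercivity and Rellich--Kondrachov, then verify weak lower semicontinuity of $J$ at the limit. The first point is standard: since $\mathbf{g}\in H^{1/2}(\partial\Omega;\mathbb{S}^2)$, classical sphere-valued extension theorems (Hardt--Lin) provide an $\mathbf{n}_0\in H^1_g(\Omega;\mathbb{S}^2)$, and Lemma~\ref{lem:zuspd} gives a uniform upper bound on $\mathbf{Z}$ on unit vectors, so $J(\mathbf{n}_0)<\infty$ and $\inf J\in[0,\infty)$.

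For coercivity I would start from the non-negative form
\[
    W(\mathbf{n})=\tfrac{K_1}{2}(\nabla\cdot\mathbf{n})^2+\tfrac{K_2}{2}\bigl(\mathbf{n}\cdot(\nabla\times\mathbf{n})+q_0\bigr)^2+\tfrac{K_3}{2}|\mathbf{n}\times(\nabla\times\mathbf{n})|^2,
\]
use the pointwise identity \eqref{eq:classic} together with Young's inequality to absorb the $q_0$ cross term, and conclude
\[
    J(\mathbf{n})\ge c\bigl(\|\nabla\cdot\mathbf{n}\|_0^2+\|\nabla\times\mathbf{n}\|_0^2\bigr)-C,
\]
with $c>0$ depending on $K_1,K_2,K_3$ and $C$ on $q_0$ and $|\Omega|$. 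A standard integration-by-parts identity bounds $\|\nabla\mathbf{n}\|_0^2$ in terms of $\|\nabla\cdot\mathbf{n}\|_0^2+\|\nabla\times\mathbf{n}\|_0^2$ plus boundary terms controlled by $\mathbf{g}$, so any minimizing sequence $\{\mathbf{n}_k\}$ is bounded in $H^1$ (the $L^2$-part is automatic since $|\mathbf{n}_k|=1$). By reflexivity and Rellich--Kondrachov, after extracting a subsequence we obtain $\mathbf{n}_k\rightharpoonup\mathbf{n}^\ast$ in $H^1$, $\mathbf{n}_k\to\mathbf{n}^\ast$ in $L^2$, and $\mathbf{n}_k\to\mathbf{n}^\ast$ pointwise a.e. Pointwise convergence preserves $|\mathbf{n}^\ast|=1$ a.e., and weak continuity of the trace preserves the Dirichlet condition, so $\mathbf{n}^\ast\in H^1_g(\Omega;\mathbb{S}^2)$.

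The main obstacle is weak lower semicontinuity of $J$ at $\mathbf{n}^\ast$, because $J$ is non-convex: the bend integral $\tfrac{K_3}{2}\langle\mathbf{Z}(\mathbf{n})\nabla\times\mathbf{n},\nabla\times\mathbf{n}\rangle_0$ depends nonlinearly on $\mathbf{n}$ through $\mathbf{Z}$. The splay integral $\tfrac{K_1}{2}\|\nabla\cdot\mathbf{n}\|_0^2$ is convex quadratic in $\nabla\mathbf{n}$ and hence weakly l.s.c.; the cross term $K_2 q_0\langle\mathbf{n},\nabla\times\mathbf{n}\rangle_0$ passes to the limit by strong--weak pairing. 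For the bend term I would write it as $\|\mathbf{Z}(\mathbf{n}_k)^{1/2}\nabla\times\mathbf{n}_k\|_0^2$: Lemma~\ref{lem:zuspd} gives uniform positive definiteness and boundedness of $\mathbf{Z}(\mathbf{n}_k)$, so $\mathbf{Z}(\mathbf{n}_k)^{1/2}\to\mathbf{Z}(\mathbf{n}^\ast)^{1/2}$ pointwise a.e.\ with a uniform $L^\infty$ bound, and dominated convergence upgrades this to strong $L^p$-convergence for every $p<\infty$. Pairing an arbitrary test field $\mathbf{v}\in L^2$ then shows $\mathbf{Z}(\mathbf{n}_k)^{1/2}\nabla\times\mathbf{n}_k\rightharpoonup\mathbf{Z}(\mathbf{n}^\ast)^{1/2}\nabla\times\mathbf{n}^\ast$ weakly in $L^2$, and weak lower semicontinuity of the $L^2$-norm delivers the bound. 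Summing the three contributions yields $J(\mathbf{n}^\ast)\le\liminf_k J(\mathbf{n}_k)=\inf J$, so $\mathbf{n}^\ast$ is the desired minimizer.
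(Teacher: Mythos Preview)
The paper does not supply its own proof of this theorem: it is quoted as a known result from \cite[Theorem~2.1]{lin-1989-article}, with no argument given. There is therefore nothing in the paper to compare your proposal against directly.

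Your direct-method argument is sound and is essentially the classical route of Hardt--Kinderlehrer--Lin. One organisational difference worth noting: the original proof leans more heavily on the null-Lagrangian structure that the paper mentions just below \eqref{eq:densityk4}. On unit vector fields one has the pointwise identity
\[
|\nabla\mathbf{n}|^2=(\nabla\cdot\mathbf{n})^2+|\nabla\times\mathbf{n}|^2+\bigl[\tr((\nabla\mathbf{n})^2)-(\nabla\cdot\mathbf{n})^2\bigr],
\]
so the Oseen--Frank density can be rewritten as a convex quadratic in $\nabla\mathbf{n}$ plus the saddle-splay null Lagrangian plus lower-order terms. Because the null Lagrangian integrates to a quantity depending only on the Dirichlet trace, it is weakly continuous on $H^1_g$, which simultaneously yields the $H^1$ coercivity bound and the weak lower semicontinuity of $J$. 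Your treatment of the bend term, passing to the limit in $\mathbf{Z}(\mathbf{n}_k)^{1/2}\nabla\times\mathbf{n}_k$ via a strong--weak product argument, reaches the same conclusion by a slightly more hands-on route that does not require isolating the null Lagrangian; it is a valid alternative, just somewhat less specific to the Oseen--Frank structure.
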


The main difficulty in solving the Oseen--Frank model \eqref{eq:energy} is the enforcement of the unit-length constraint.
There are several existing approaches to handling constraints, e.g., projection \cite{lin-tai-2014-book},
Lagrange multipliers, and penalty methods \cite[Section 12.3 \& 17]{nocedal-1999-book}.

The projection method is numerically simple but the value of the energy functional may go up and down dramatically after each projection,
making it difficult to control in the optimization procedure \cite{lin-tai-2014-book}.
A Lagrange multiplier is often used to replace constrained optimization problems with unconstrained ones,
but an important disadvantage of this approach is that it introduces another unknown (i.e., the Lagrange multiplier)
and leads to a saddle-point structure which can be difficult to solve \cite{benzi-2005-article}.
On the other hand, the penalty method has the favorable property that the resulting system has an energy decay property \cite{lin-richter-2007-article}
which may result in an easier theoretical and numerical study of the solution.
However, the penalty parameter has to be very large for the accuracy of approximating the constraints,
leading to an ill-conditioned system.
Some works based on either projection or pure penalty methods for nematic phases can be found in \cite{glowinski-lin-2003-article, lin-richter-2007-article, glowinski-1989-book} and the references therein.

Fortunately, it is possible to amend the ill-conditioning effects with large penalty parameters that are inherent in the pure penalty method by combining it with a Lagrange multiplier.
This is the \textit{augmented Lagrangian} (AL) algorithm \cite{fortin-1983-book}.
This strategy combines the advantages of both schemes:
the penalty parameter can be relatively small due to the presence of the Lagrange multiplier,
and the Schur complement of the saddle-point system is easier to solve due to the presence of the penalty term
\cite{glowinski-lin-2003-article, glowinski-1989-book,olshanskii-2002-article, benzi-2006-article, farrell-mitchell-2018-article}.

We first consider the method of Lagrange multipliers.
We then add the augmented Lagrangian term to control the Schur complement of the system.

\subsection{Lagrange multiplier and Newton linearization}\label{sec:newton}

By introducing the Lagrange multiplier $\lambda\in L^2(\Omega)$, the associated Lagrangian of the minimization problem \eqref{eq:energy} is then defined as
\begin{equation}\label{eq:lag}
    \mathcal{L}(\mathbf{n}, \lambda) =J(\mathbf{n})+ \langle \lambda, \mathbf{n}\cdot \mathbf{n} -1 \rangle_0,
\end{equation}
and its first-order optimality conditions are:
find $(\mathbf{n}, \lambda) \in \mathbf{H}^1_g(\Omega) \times L^2(\Omega)$ such that
\begin{equation}\label{eq:equil}
    \begin{aligned}
        \mathcal{L}_{\mathbf{n}}[\mathbf{v}] &= J_\mathbf{n}[\mathbf{v}]+\langle \lambda, 2\mathbf{n}\cdot\mathbf{v}\rangle_0\\
        & = K_1 \langle \nabla\cdot \mathbf{n},\nabla\cdot\mathbf{v}\rangle_0 + K_3 \langle \mathbf{Z}\nabla\times\mathbf{n}, \nabla\times \mathbf{v}\rangle_0 \\
        & \quad + (K_2-K_3)\langle \mathbf{n}\cdot\nabla\times \mathbf{n}, \mathbf{v}\cdot\nabla\times\mathbf{n}\rangle_0 \\
        & \quad + K_2q_0\langle \mathbf{v}, \nabla\times\mathbf{n}\rangle_0 + K_2q_0\langle \mathbf{n}, \nabla\times\mathbf{v}\rangle_0 + \langle \lambda,2\mathbf{n}\cdot\mathbf{v}\rangle_0 \\
        & = 0 \quad \forall \mathbf{v}\in \mathbf{H}^1_0(\Omega),\\
        \mathcal{L}_{\lambda}[\mu] &= \langle \mu, \mathbf{n}\cdot\mathbf{n}-1 \rangle_0 = 0
        \quad \forall \mu \in L^2(\Omega).
    \end{aligned}
\end{equation}

As \eqref{eq:equil} is nonlinear, Newton linearization is employed.
Let $\mathbf{n}_k$ and $\lambda_k$ be the current approximations for $\mathbf{n}$ and $\lambda$, respectively,
and denote the corresponding updates to these approximations as $\delta \mathbf{n} = \mathbf{n}_{k+1} - \mathbf{n}_k$ and $\delta \lambda = \lambda_{k+1} - \lambda_k$.
Then the Newton iteration at $(\mathbf{n}_k,\lambda_k)$ in block form is given by:
find $(\delta \mathbf{n},\delta\lambda)\in \mathbf{H}^1_0(\Omega)\times L^2(\Omega)$ such that
\begin{equation}\label{eq:newton-it}
    \begin{bmatrix}
        \mathcal{L}_{\mathbf{n}\mathbf{n}} & \mathcal{L}_{\mathbf{n}\lambda} \\
        \mathcal{L}_{\lambda\mathbf{n}} & 0 
    \end{bmatrix}
    \begin{bmatrix}
        \delta \mathbf{n} \\
        \delta \lambda 
    \end{bmatrix} = -
        \begin{bmatrix}
             \mathcal{L}_{\mathbf{n}}\\
             \mathcal{L}_{\lambda}
        \end{bmatrix},
\end{equation}
where
\begin{equation}\label{eq:lnn}
    \begin{split}
        \mathcal{L}_{\mathbf{n}\mathbf{n}}[\mathbf{v}, \delta\mathbf{n}] &=
        J_{\mathbf{n}\mathbf{n}}[\mathbf{v},\delta\mathbf{n}] + \langle \lambda_k, 2\delta\mathbf{n}\cdot\mathbf{v}\rangle_0 \\
        &= K_1 \langle \nabla\cdot\delta\mathbf{n}, \nabla\cdot\mathbf{v}\rangle_0 + K_3\langle \mathbf{Z}(\mathbf{n}_k) \nabla\times\delta\mathbf{n}, \nabla\times\mathbf{v}\rangle_0 \\
        &\quad + (K_2-K_3)\Big( \langle \delta\mathbf{n}\cdot\nabla\times\mathbf{n}_k, \mathbf{n}_k\cdot\nabla\times\mathbf{v}\rangle_0 + \langle \mathbf{n}_k\cdot\nabla\times\mathbf{n}_k, \delta\mathbf{n}\cdot\nabla\times\mathbf{v}\rangle_0 \\
        &\quad + \langle \mathbf{v}\cdot\nabla\times\mathbf{n}_k,\mathbf{n}_k\cdot\nabla\times\delta\mathbf{n}\rangle_0 + \langle \mathbf{n}_k\cdot\nabla\times\mathbf{n}_k,\mathbf{v}\cdot\nabla\times\delta\mathbf{n}\rangle_0 \\
        &\quad + \langle \delta \mathbf{n}\cdot\nabla\times\mathbf{n}_k, \mathbf{v}\cdot\nabla\times\mathbf{n}_k\rangle_0 \Big) \\
        &\quad + K_2q_0\langle \mathbf{v}, \nabla\times\delta\mathbf{n}\rangle_0 + K_2q_0\langle \delta\mathbf{n}, \nabla\times\mathbf{v}\rangle_0 + \langle \lambda_k,2\delta\mathbf{n}\cdot\mathbf{v}\rangle_0,
    \end{split}
\end{equation}
and
\begin{displaymath}
    \begin{aligned}
        &  \mathcal{L}_{\mathbf{n}\lambda}[\mathbf{v}, \delta\lambda] = \langle \delta\lambda, 2\mathbf{n}_k\cdot\mathbf{v}\rangle_0,\\
        &  \mathcal{L}_{\lambda\mathbf{n}}[\mu, \delta\mathbf{n}] = \langle \mu, 2\mathbf{n}_k\cdot\delta\mathbf{n}\rangle_0.
    \end{aligned}
\end{displaymath}

Since $\mathcal{L}(\mathbf{n}, \lambda)$ is linear in $\lambda$, $\mathcal{L}_{\lambda\lambda} = 0$.
This results in \eqref{eq:newton-it} being a saddle-point problem.

With a suitable spatial discretization (we only consider conforming finite elements in this work, i.e., $V_h\subset \mathbf{H}^1_0(\Omega)$, $Q_h\subset L^2(\Omega)$),
a symmetric saddle-point system must be solved at each Newton iteration:
\begin{equation}\label{eq:saddle}
    \begin{bmatrix}
        A & B^\top \\
        B & 0
    \end{bmatrix}
    \begin{bmatrix}
        U \\
        P
    \end{bmatrix}=
    \begin{bmatrix}
        f \\
        g
    \end{bmatrix},
\end{equation}
where
$U$ and $P$ represent the coefficient vectors of $\delta \mathbf{n}$ and $\delta \lambda$ in terms of the basis functions of $V_h$ and $Q_h$, respectively.

We can accordingly write the discrete variational problem as:
find $\delta\mathbf{n}_h\in V_h$ and $\delta\lambda_h\in Q_h$ such that
\begin{equation}\label{eq:bilinear}
    \begin{aligned}
        a(\delta \mathbf{n}_h, \mathbf{v}_h) + b(\mathbf{v}_h, \delta \lambda_h) & = F(\mathbf{v}_h)
        \quad \forall \mathbf{v}_h\in V_h, \\
        b(\delta \mathbf{n}_h, \mu_h) &= G(\mu_h)
        \quad \forall \mu_h \in Q_h,
    \end{aligned}
\end{equation}
where $a(\cdot, \cdot)$ and $b(\cdot,\cdot)$ are bilinear forms given by
\begin{equation*}
    \begin{split}
        a(\mathbf{u},\mathbf{v}) = &K_1 \langle \nabla\cdot\mathbf{u}, \nabla\cdot\mathbf{v}\rangle_0 + K_3\langle \mathbf{Z}(\mathbf{n}_k) \nabla\times\mathbf{u}, \nabla\times\mathbf{v}\rangle_0 \\
        &+ (K_2-K_3)\Big( \langle \mathbf{u}\cdot\nabla\times\mathbf{n}_k, \mathbf{n}_k \cdot\nabla\times\mathbf{v}\rangle_0 + \langle \mathbf{n}_k \cdot\nabla\times\mathbf{n}_k, \mathbf{u}\cdot\nabla\times\mathbf{v}\rangle_0 \\
        &+ \langle \mathbf{v}\cdot\nabla\times\mathbf{n}_k,\mathbf{n}_k\cdot\nabla\times\mathbf{u}\rangle_0 + \langle \mathbf{n}_k\cdot\nabla\times\mathbf{n}_k, \mathbf{v}\cdot\nabla\times\mathbf{u}\rangle_0 \\
        &+ \langle\mathbf{u}\cdot\nabla\times\mathbf{n}_k, \mathbf{v}\cdot\nabla\times\mathbf{n}_k\rangle_0 \Big) \\
        &+ K_2q_0\langle\mathbf{v}, \nabla\times\mathbf{u}\rangle_0 + K_2q_0\langle \mathbf{u}, \nabla\times\mathbf{v}\rangle_0 + \langle \lambda_k,2\mathbf{u}\cdot\mathbf{v}\rangle_0,
    \end{split}
\end{equation*}
and
\begin{displaymath}
    b(\mathbf{v},p) = \langle p, 2\mathbf{n}_k\cdot\mathbf{v}\rangle_0,
\end{displaymath}
and $F$ and $G$ are linear functionals in the forms of
\begin{equation*}
    \begin{aligned}
    F(\mathbf{v}) = &- \Big( K_1 \langle \nabla\cdot \mathbf{n}_k, \nabla\cdot \mathbf{v}\rangle_0 + K_3\langle Z(\mathbf{n}_k)\nabla\times \mathbf{n}_k, \nabla\times\mathbf{v}\rangle_0 \\
    &+ (K_2-K_3)\langle\mathbf{n}_k\cdot\nabla\times\mathbf{n}_k, \mathbf{v}\cdot\nabla\cdot\mathbf{n}_k\rangle_0 \\
        &+ K_2q_0 \langle \mathbf{v}, \nabla\times\mathbf{n}_k \rangle_0 + K_2q_0 \langle \mathbf{n}_k,\nabla\times\mathbf{v}\rangle_0 \\
        &+ \langle \lambda_k, 2\mathbf{n}_k\cdot\mathbf{v}\rangle_0 \Big),
    \end{aligned}
\end{equation*}
and
\begin{displaymath}
    G(\mu) = -\langle \mu, \mathbf{n}_k\cdot\mathbf{n}_k -1\rangle_0.
\end{displaymath}

\begin{remark}
    The well-posedness of the continuous and discretized Newton system (with the $([\mathbb{Q}_m]^d\oplus B_F)$-$\mathbb{Q}_0$ finite element pair, $m\ge 1$)
    for a generalized nematic LC problem is discussed in \cite{adler-2015-article},
    where $B_F$ denotes the space of quadratic bubbles and $\mathbb{Q}_k$ represents tensor product piecewise $C^0$ polynomials of degree $k \ge 0$ on a quadrilateral mesh.
    Moreover, the authors of \cite{adler-2016-article} considered the pure penalty approach for nematic LC
    and obtained a well-posedness result of the penalized Newton iteration through similar techniques.
    We will follow these analysis strategies in this section.
\end{remark}

In our work, we will denote by $\mathbb{P}_k$ the set of piecewise $C^0$ polynomials of degree $k\ge 0$ on a mesh of triangles or tetrahedra.

It is straightforward to deduce the well-posedness of the discrete Newton iteration \eqref{eq:bilinear}
for cholesteric problems under some proper assumptions on the problem-dependent constants.
In fact, two additional $q_0$-related terms in $\mathcal{L}_{\mathbf{n}\mathbf{n}}$ from \eqref{eq:lnn}
compared to the nematic energy density from \cite{adler-2015-article} are simply $L^2$ inner products,
which can be easily bounded using the Cauchy--Schwarz and triangle inequalities.
We state the results without proof in the following and start with some assumptions.

\begin{assumption}\label{assump}
    Assume that there exist constants $0<\alpha\le 1\le \beta$ such that $\alpha \le |\mathbf{n}_k|^2 \le \beta$.
    For $0<\kappa<1$, assume further that $\beta < \frac{1}{1-\kappa}$.
    By Lemma \ref{lem:zuspd}, $\mathbf{Z}(\mathbf{n}_k)$ remains USPD with lower bound $\eta$ and upper bound $\Lambda$, i.e.,
    \begin{displaymath}
        \eta \le \frac{\mathbf{x}^\top \mathbf{Z}(\mathbf{n}_k)\mathbf{x}}{\mathbf{x}^\top \mathbf{x}} \le \Lambda
        \quad \forall \mathbf{x}\in\mathbb{R}^d\backslash\{\mathbf{0}\}.
    \end{displaymath}
\end{assumption}

Note that here and hereafter, $\|\cdot\|_1$ denotes the $H^1$ norm: $\|w\|_1^2=\|w\|_0^2+\|\nabla w\|_0^2$.

\begin{lemma}\label{lem:acoer:dis}
    (Continuous coercivity)
    With Assumption \ref{assump}, we assume further that the current Lagrange multiplier approximation $\lambda_k$ is pointwise non-negative almost everywhere.
    Let $K_1>K_2q_0C_4$ and $K_3\eta > K_2q_0(C_4 + 1)$ with $C_4$ to be defined.
    Then there exists an $\alpha_0>0$ such that
    \begin{equation}\label{eq:coer}
        \alpha_0 \|\mathbf{v}\|_1^2 \le a(\mathbf{v},\mathbf{v})
        \quad \forall\mathbf{v}\in \mathbf{H}^1_0(\Omega).
    \end{equation}
    Moreover, when $\kappa=1$, i.e., $K_2=K_3$, if $K_1>K_2q_0C_4$ and $1>q_0(C_4 + 1)$,
    then the coercivity result \eqref{eq:coer} also holds.
\end{lemma}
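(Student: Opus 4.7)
The plan is to prove coercivity by taking the diagonal of the bilinear form, estimating each group of terms, and reducing everything to the quantities $\|\nabla\cdot\mathbf v\|_0^2$, $\|\nabla\times\mathbf v\|_0^2$, and $\|\mathbf v\|_0^2$, which can then be combined via the Helmholtz identity
\[
\|\nabla\mathbf v\|_0^2=\|\nabla\cdot\mathbf v\|_0^2+\|\nabla\times\mathbf v\|_0^2,\qquad \mathbf v\in\mathbf H^1_0(\Omega),
\]
together with Poincar\'e--Friedrichs. The proof structurally mimics the nematic analysis of \cite{adler-2015-article}, so the only genuinely new work is to control the two cholesteric cross terms $2K_2q_0\langle \mathbf v,\nabla\times\mathbf v\rangle_0$ that appear when $q_0>0$.

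First I would set $\mathbf u=\mathbf v$ in $a(\cdot,\cdot)$ and split $a(\mathbf v,\mathbf v)$ into four groups: (i) the two positive ``principal'' contributions $K_1\|\nabla\cdot\mathbf v\|_0^2+K_3\langle\mathbf Z(\mathbf n_k)\nabla\times\mathbf v,\nabla\times\mathbf v\rangle_0$; (ii) the five $(K_2-K_3)$ cross terms; (iii) the two $q_0$ terms; (iv) the multiplier term $2\langle\lambda_k,\mathbf v\cdot\mathbf v\rangle_0$. By Lemma~\ref{lem:zuspd} and Assumption~\ref{assump}, group (i) is bounded below by $K_1\|\nabla\cdot\mathbf v\|_0^2+K_3\eta\|\nabla\times\mathbf v\|_0^2$. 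Group (iv) is non-negative by the pointwise hypothesis $\lambda_k\ge 0$ and can simply be discarded.

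For group (ii) I would apply the estimates of \cite{adler-2015-article} verbatim: Cauchy--Schwarz together with $|\mathbf n_k|^2\le\beta$ bounds each of the cross terms by a multiple of $\|\nabla\times\mathbf v\|_0^2$ (and of $\|\mathbf v\cdot\nabla\times\mathbf n_k\|_0^2$, which is then absorbed into the same norm after using the USPD bound from Lemma~\ref{lem:zuspd}); the upshot is that the $(K_2-K_3)$ contributions can be absorbed into the constant $K_3\eta$ already present in front of $\|\nabla\times\mathbf v\|_0^2$ without destroying positivity, which is precisely why the statement is phrased in terms of $\eta$ and not $K_3$. For group (iii), Cauchy--Schwarz followed by Young's inequality with unit weight gives
\[
\bigl|2K_2q_0\langle\mathbf v,\nabla\times\mathbf v\rangle_0\bigr|\le K_2q_0\|\mathbf v\|_0^2+K_2q_0\|\nabla\times\mathbf v\|_0^2.
\]
Applying the Poincar\'e--Friedrichs inequality $\|\mathbf v\|_0^2\le C_4\|\nabla\mathbf v\|_0^2$ to the first summand and then the Helmholtz identity transfers the $\|\mathbf v\|_0^2$ into a $C_4(\|\nabla\cdot\mathbf v\|_0^2+\|\nabla\times\mathbf v\|_0^2)$ bound, producing exactly the two threshold conditions $K_1>K_2q_0C_4$ and $K_3\eta>K_2q_0(C_4+1)$ asserted in the lemma; this also identifies $C_4$ as the squared Poincar\'e constant on $\Omega$.

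Combining the four groups yields $a(\mathbf v,\mathbf v)\ge c_1\|\nabla\cdot\mathbf v\|_0^2+c_2\|\nabla\times\mathbf v\|_0^2$ with $c_1,c_2>0$ under the stated inequalities; the Helmholtz identity then converts this into a lower bound by $\min(c_1,c_2)\|\nabla\mathbf v\|_0^2$, and one more application of Poincar\'e--Friedrichs upgrades $\|\nabla\mathbf v\|_0^2$ to the full $H^1$-norm, giving the desired $\alpha_0$. For the equal-constant case $K_2=K_3$ group (ii) vanishes entirely and $\mathbf Z=\mathbf I$ so $\eta=1$; the threshold $K_3\eta>K_2q_0(C_4+1)$ then collapses to $1>q_0(C_4+1)$, matching the alternative hypothesis. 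The main obstacle is purely bookkeeping: keeping track of exactly how much of the $\|\nabla\times\mathbf v\|_0^2$ budget from $K_3\eta$ is consumed by the $(K_2-K_3)$ cross-term bounds versus the new cholesteric term, so that the final inequalities emerge cleanly in the form stated.
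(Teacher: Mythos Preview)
Your proposal is correct and follows essentially the same route as the paper: bound the $\mathbf{Z}$-term below by $K_3\eta\|\nabla\times\mathbf v\|_0^2$, discard the non-negative $\lambda_k$ contribution, defer the $(K_2-K_3)$ cross-term estimates to \cite{adler-2015-article}, and control the cholesteric term $2K_2q_0\langle\mathbf v,\nabla\times\mathbf v\rangle_0$ via Cauchy--Schwarz and Young. The only discrepancy is in the identification of $C_4$: the paper does not use the Helmholtz identity as an equality but instead invokes the div--curl inequality $\|\nabla\mathbf v\|_0^2\le C_1(\|\nabla\cdot\mathbf v\|_0^2+\|\nabla\times\mathbf v\|_0^2)$ from \cite[Remark~2.7]{girault-2011-book} together with Poincar\'e $\|\mathbf v\|_0^2\le C_3\|\nabla\mathbf v\|_0^2$ and sets $C_4=C_1C_3$; your equality is valid on $\mathbf H^1_0(\Omega)$ and simply corresponds to the sharper choice $C_1=1$.
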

\begin{proof}
    With the lower bound $\eta$ of $\mathbf{Z}$, we compute the bilinear form:
    \begin{equation*}
        \begin{split}
            a(\mathbf{v},\mathbf{v}) &\ge K_1\|\nabla\cdot\mathbf{v}\|_0^2 + K_3\eta \|\nabla\times\mathbf{v}\|_0^2 + 2K_2q_0 \langle \mathbf{v},\nabla\times\mathbf{v}\rangle_0 + 2\langle \lambda_k, \mathbf{v}\cdot\mathbf{v}\rangle_0\\
            & \ge K_1\|\nabla\cdot\mathbf{v}\|_0^2 + K_3\eta \|\nabla\times\mathbf{v}\|_0^2 - 2K_2q_0 |\langle \mathbf{v},\nabla\times\mathbf{v}\rangle_0| \\
            & \ge K_1\|\nabla\cdot\mathbf{v}\|_0^2 + K_3\eta \|\nabla\times\mathbf{v}\|_0^2 - 2K_2q_0\|\mathbf{v}\|_0\|\nabla\times\mathbf{v}\|_0 \\
            & \ge K_1\|\nabla\cdot\mathbf{v}\|_0^2 + K_3\eta \|\nabla\times\mathbf{v}\|_0^2 - K_2q_0(\|\mathbf{v}\|_0^2 + \|\nabla\times\mathbf{v}\|_0^2 ),
        \end{split}
    \end{equation*}
    where the first inequality comes from the assumption that $\lambda_k$ is non-negative pointwise
    and the last two inequalities are derived by Cauchy--Schwarz and H\"older inequalities, respectively.

    By Remark 2.7 of \cite{girault-2011-book}, for a bounded Lipschitz domain, there exists $C_1>0$ such that
    \begin{equation*}
        \|\nabla\mathbf{v}\|_0^2 \le C_1(\|\nabla\cdot\mathbf{v}\|^2_0 + \|\nabla\times \mathbf{v}\|_0^2),
    \end{equation*}
    for all $\mathbf{v}\in \mathbf{H}_0(\mathrm{div},\Omega)\cap\mathbf{H}_0(\mathrm{curl},\Omega)$\footnote{In fact, $\mathbf{H}_0^1(\Omega)=\mathbf{H}_0(\mathrm{div},\Omega)\cap\mathbf{H}_0(\mathrm{curl},\Omega)$ holds for any bounded Lipschitz domain $\Omega$ \cite[Lemma 2.5]{girault-2011-book}.}.
    Here, we denote
    \begin{displaymath}
        \begin{aligned}
            &\mathbf{H}_0(\mathrm{div},\Omega)=\{\mathbf{v}\in \mathbf{L}^2(\Omega):\nabla\cdot\mathbf{v}\in L^2(\Omega), \bm{\nu}\cdot\mathbf{v}=0 \ \text{on}\ \partial\Omega\}, \\
            &\mathbf{H}_0(\mathrm{curl},\Omega)=\{\mathbf{v}\in \mathbf{L}^2(\Omega):\nabla\times\mathbf{v}\in \mathbf{L}^2(\Omega), \bm{\nu}\times\mathbf{v}=\mathbf{0} \ \text{on}\ \partial\Omega\},
        \end{aligned}
    \end{displaymath}
    where $\bm{\nu}$ is the outward unit normal on the boundary $\partial\Omega$.
    Then using the classical Poincar\'e inequality, $\|\mathbf{v}\|_0^2\le C_3\|\nabla\mathbf{v}\|_0^2$ for all $\mathbf{v}\in \mathbf{H}^1_0(\Omega)$,
    and defining $C_4=C_1C_3>0$, we have
    \begin{equation*}
        \|\mathbf{v}\|_0^2 \le C_4(\|\nabla\cdot\mathbf{v}\|_0^2 + \|\nabla\times \mathbf{v}\|_0^2).
    \end{equation*}
    Furthermore, there exists $C_2=C_4+C_1>0$ such that
    \begin{equation*}
        \|\mathbf{v}\|_1^2 \le C_2 (\|\nabla\cdot\mathbf{v}\|_0^2 + \|\nabla\times\mathbf{v}\|_0^2).
    \end{equation*}
    It follows that
    \begin{equation*}
        \begin{split}
            a(\mathbf{v},\mathbf{v}) & \ge K_1\|\nabla\cdot\mathbf{v}\|_0^2
            + K_2 \|\nabla\times\mathbf{v}\|_0^2
            - K_2q_0\left[C_4 \left(\|\nabla\cdot\mathbf{v}\|_0^2 + \|\nabla\times \mathbf{v}\|_0^2\right)
            - \|\nabla\times\mathbf{v}\|^2_0\right] \\
            & = (K_1-K_2q_0 C_4)\|\nabla\cdot\mathbf{v}\|_0^2
            + (K_3\eta - K_2q_0C_4-K_2q_0 )\|\nabla\times\mathbf{v}\|_0^2.
        \end{split}
    \end{equation*}
    Choosing $c=\min\{K_1-K_2q_0C_4, K_3\eta-K_2q_0C_4-K_2q_0 \}>0$ (the positivity follows from the assumptions) and $\alpha_0=c/C_2$,
    we find that the coercivity \eqref{eq:coer} holds.

    In particular, when $\kappa=1$ (i.e., $K_2=K_3$), we have $\mathbf{Z}=\mathbf{I}$ and thus $\eta=1$.
    Then, the bilinear form becomes
    \begin{equation*}
        \begin{split}
            a(\mathbf{v},\mathbf{v}) &= K_1\|\nabla\cdot\mathbf{v}\|_0^2 + K_2 \|\nabla\times\mathbf{v}\|_0^2 + 2K_2q_0 \langle \mathbf{v},\nabla\times\mathbf{v}\rangle_0 + 2\langle \lambda_k, \mathbf{v}\cdot\mathbf{v}\rangle_0\\
            & \ge K_1\|\nabla\cdot\mathbf{v}\|_0^2 + K_2 \|\nabla\times\mathbf{v}\|_0^2 - 2K_2q_0 |\langle \mathbf{v},\nabla\times\mathbf{v}\rangle_0| \\
            & \ge K_1\|\nabla\cdot\mathbf{v}\|_0^2 + K_2 \|\nabla\times\mathbf{v}\|_0^2 - 2K_2q_0\|\mathbf{v}\|_0\|\nabla\times\mathbf{v}\|_0 \\
            & \ge K_1\|\nabla\cdot\mathbf{v}\|_0^2 + K_2 \|\nabla\times\mathbf{v}\|_0^2 - K_2q_0(\|\mathbf{v}\|_0^2 + \|\nabla\times\mathbf{v}\|_0^2 ).
        \end{split}
    \end{equation*}
    By choosing $C = \min\{K_1-K_2q_0 C_4, K_2(1 - q_0 C_4 - q_0)\}>0$ (the positivity comes from the assumptions)
    and $\alpha_0=C/C_2$, we obtain the desired coercivity
    \begin{displaymath}
        a(\mathbf{v},\mathbf{v})\ge \alpha_0\|\mathbf{v}\|_1^2
        \quad \forall \mathbf{v}\in \mathbf{H}^1_0(\Omega),
    \end{displaymath}
    as stated in \eqref{eq:coer}. \qed
\end{proof}

So far, the coercivity of the bilinear form $a(\cdot,\cdot)$ has been shown for all functions in $\mathbf{H}^1_0(\Omega)$.
The discrete coercivity follows if a conforming finite element for the director space is chosen.

The boundedness of the bilinear form $a(\cdot,\cdot)$ and the right hand side functionals $F(\cdot)$ and $G(\cdot)$
can be obtained directly by following the proofs in~\cite{adler-2015-article}.
Hence, we omit the details here.

It remains to consider the discrete inf-sup condition of the bilinear form $b(\cdot,\cdot)$
for a finite element pair $V_h$-$Q_h$, i.e.~whether there exists a constant $c$ such that
\begin{equation*}
    \underset{\mathbf{u}_h\in V_h\backslash \{0\}}{\mathrm{sup}} \frac{b(\mathbf{u}_h, \mu_h)}{\|\mathbf{u}_h\|} \ge c\|\mu_h\|
    \quad \forall \mu_h \in Q_h.
\end{equation*}
The continuous inf-sup condition was shown in \cite[Appendix B]{emerson-phd} and \cite[Theorem 3.1]{hu-2009-article}.
%
%
However, the discrete inf-sup condition is not inherited from the continuous problem.
Some previous works have succeeded in obtaining a discrete inf-sup condition for some specific discretizations.
A discrete inf-sup condition was proven for the $([\mathbb{Q}_m]^d\oplus B_F)$-$\mathbb{Q}_0$ element on quadrilaterals
in \cite[Lemma 2.5.14]{emerson-phd} and \cite[Lemma 3.12]{adler-2015-article}.
The discrete inf-sup condition for the $[\mathbb{P}_1]^2$-$\mathbb{P}_1$ discretization is shown in \cite[Theorem 4.5]{hu-2009-article},
where the analysis is only valid for the two-dimensional case due to the use of some special inverse inequalities.
It is straightforward to deduce that an enrichment of $V_h$ still guarantees the stability of the discretization, and thus $[\mathbb{P}_2]^2$-$\mathbb{P}_1$ is inf-sup stable under
the same conditions.
%


We now consider the matrix form of the saddle-point system \eqref{eq:saddle}.
The coercivity of the bilinear form $a(\cdot,\cdot)$ implies the invertibility of the coefficient matrix $A$
and the discrete inf-sup condition indicates that $B$ has full row rank.
We use the full block factorization preconditioner
\begin{equation*}
\mathcal{P}^{-1} =
    \begin{bmatrix}
        I & -\tilde{A}^{-1}B^\top\\
        0 & I
    \end{bmatrix}
    \begin{bmatrix}
        \tilde{A}^{-1} & 0\\
        0 & \tilde{S}^{-1}
    \end{bmatrix}
    \begin{bmatrix}
        I & 0\\
        -B\tilde{A}^{-1} & I
    \end{bmatrix}
\end{equation*}
with approximate inner solves $\tilde{A}^{-1}$ and $\tilde{S}^{-1}$ for the director block and the Schur complement $S = - BA^{-1}B^\top$, respectively,
for solving the saddle-point problem \eqref{eq:saddle}. With exact inner solves, this is an exact inverse.
With this strategy, solving the original saddle-point problem \eqref{eq:saddle} reduces to solving two smaller linear systems involving $A$ and $S$.
Even though $A$ is sparse, its inverse is generally dense, making it impractical to store $S$ explicitly.
In this situation, developing a fast solver for $A$ is tractable while approximating $S$ becomes difficult.
We will return to this issue in Section \ref{sec:schur} and Section \ref{sec:mg}.

\subsection{Augmented Lagrangian form}

Now, we employ the AL stabilization strategy and modify the linearized saddle point system to control its Schur complement $S$.

\subsubsection{Penalizing the constraint}\label{sec:cont-penal}

We penalize the continuous form of the nonlinear constraint $\mathbf{n}\cdot\mathbf{n}=1$ in the AL algorithm and obtain the Lagrangian
\begin{equation}\label{eq:aug-lag}
    \tilde{\mathcal{L}}(\mathbf{n}, \lambda) = \mathcal{L}(\mathbf{n}, \lambda) + \frac{\gamma}{2} \langle \mathbf{n}\cdot\mathbf{n} -1, \mathbf{n}\cdot\mathbf{n} -1 \rangle_0
\end{equation}
for $\gamma \ge 0$.
The weak form of the associated first-order optimality conditions is to
find $(\mathbf{n}, \lambda)\in \mathbf{H}^1_g(\Omega)\times L^2(\Omega)$ such that
\begin{equation*}
    \begin{aligned}
        \tilde{\mathcal{L}}_{\mathbf{n}}[\mathbf{v}] &= \mathcal{L}_{\mathbf{n}}[\mathbf{v}] + 2\gamma \langle \mathbf{n}\cdot\mathbf{n}-1, \mathbf{n}\cdot\mathbf{v} \rangle_0 = 0
        \quad \forall \mathbf{v}\in \mathbf{H}^1_0(\Omega),\\
        \tilde{\mathcal{L}}_{\lambda}[\mu] &= \mathcal{L}_{\lambda}[\mu]=\langle \mu, \mathbf{n}\cdot\mathbf{n}-1\rangle_0 = 0
        \quad \forall \mu \in L^2(\Omega).
    \end{aligned}
\end{equation*}
The Newton linearization at a given approximation $(\mathbf{n}_k,\lambda_k)$ yields a system of the form:
\begin{displaymath}
    \begin{bmatrix}
        \tilde{\mathcal{L}}_{\mathbf{n}\mathbf{n}} & {\mathcal{L}}_{\mathbf{n}\lambda}\\
        {\mathcal{L}}_{\lambda\mathbf{n}} & 0
    \end{bmatrix}
    \begin{bmatrix}
        \delta\mathbf{n}\\
        \delta \lambda
    \end{bmatrix}=-
    \begin{bmatrix}
        \tilde{\mathcal{L}}_\mathbf{n}\\
        {\mathcal{L}}_\lambda
    \end{bmatrix}.
\end{displaymath}
Thus, we have to solve the augmented discrete variational problem:
\begin{equation}\label{eq:newton-aug}
    \begin{aligned}
        {a}^c(\delta\mathbf{n}_h, \mathbf{v}_h) + b(\mathbf{v}_h, \delta\lambda_h) & = {F}^c(\mathbf{v}_h)
        \quad \forall \mathbf{v}_h\in V_h, \\
        b(\delta\mathbf{n}_h, \mu_h) &= G(\mu_h)
        \quad \forall \mu_h \in Q_h,
    \end{aligned}
\end{equation}
where
\begin{equation*}
    {a}^c(\mathbf{u},\mathbf{v}) = a(\mathbf{u},\mathbf{v}) + 4\gamma\langle \mathbf{n}_k\cdot\mathbf{u}, \mathbf{n}_k\cdot \mathbf{v}\rangle_0 + 2\gamma\langle \mathbf{n}_k\cdot\mathbf{n}_k -1, \mathbf{u}\cdot\mathbf{v}\rangle_0,
\end{equation*}
and
\begin{equation*}
    {F}^c(\mathbf{v}) = F(\mathbf{v}) - 2\gamma\langle \mathbf{n}_k\cdot\mathbf{n}_k-1, \mathbf{n}_k\cdot\mathbf{v}\rangle_0.
\end{equation*}

Comparing \eqref{eq:newton-aug} to the original system \eqref{eq:bilinear},
only the bilinear form $a(\cdot,\cdot)$ and the right hand side functional $F(\cdot)$ have changed.
The boundedness of $F^c(\cdot)$ follows straightforwardly via the Cauchy--Schwarz inequality.
As for the coercivity of $a^c(\cdot,\cdot)$, an additional assumption on the penalty parameter $\gamma$ is needed.

\begin{lemma}\label{lem:acoer-aug}
    (Continuous coercivity)
    Let $\alpha_0 > 0$ be the coercivity constant of $a(\cdot, \cdot)$.
    If $\alpha_0>2\gamma|\alpha-1|$ with $0<\alpha\le 1\le\beta$ satisfying $\alpha\le |\mathbf{n}_k|^2\le \beta$,
there exists a $\beta_0>0$ such that
    \begin{equation*}
       a^c(\mathbf{v},\mathbf{v})\ge \beta_0 \|\mathbf{v}\|_1^2
        \quad \forall\mathbf{v}\in \mathbf{H}^1_0(\Omega).
    \end{equation*}
\end{lemma}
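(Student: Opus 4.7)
The plan is to start from the definition of $a^c(\mathbf{v},\mathbf{v})$ and peel off the three pieces: the original bilinear form $a(\mathbf{v},\mathbf{v})$, the non-negative penalty $4\gamma\|\mathbf{n}_k\cdot\mathbf{v}\|_0^2$, and the possibly sign-indefinite term $2\gamma\langle \mathbf{n}_k\cdot\mathbf{n}_k-1,\mathbf{v}\cdot\mathbf{v}\rangle_0$. For the first piece I invoke Lemma~\ref{lem:acoer:dis} directly, which gives the bound $a(\mathbf{v},\mathbf{v})\ge \alpha_0\|\mathbf{v}\|_1^2$ under the standing assumptions. The second piece is a square and can simply be discarded from below.

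The only delicate piece is the third. Since $\alpha\le|\mathbf{n}_k|^2\le\beta$ with $\alpha\le 1\le\beta$, the integrand $\mathbf{n}_k\cdot\mathbf{n}_k-1$ takes values in $[\alpha-1,\beta-1]$, so the worst case is when it is negative and equal to $\alpha-1$. I would then bound
\begin{equation*}
    2\gamma\langle \mathbf{n}_k\cdot\mathbf{n}_k-1,\mathbf{v}\cdot\mathbf{v}\rangle_0 \ge 2\gamma(\alpha-1)\|\mathbf{v}\|_0^2 = -2\gamma|\alpha-1|\,\|\mathbf{v}\|_0^2.
\end{equation*}
Combining the three contributions and using $\|\mathbf{v}\|_0\le\|\mathbf{v}\|_1$, I arrive at
\begin{equation*}
    a^c(\mathbf{v},\mathbf{v}) \ge \bigl(\alpha_0 - 2\gamma|\alpha-1|\bigr)\|\mathbf{v}\|_1^2,
\end{equation*}
and the constant $\beta_0 \coloneqq \alpha_0-2\gamma|\alpha-1|$ is strictly positive by the hypothesis $\alpha_0>2\gamma|\alpha-1|$, yielding the claim.

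There is essentially no main obstacle here: the whole argument rests on already-established coercivity of $a(\cdot,\cdot)$ plus pointwise control of $\mathbf{n}_k\cdot\mathbf{n}_k-1$ from below by $\alpha-1$. The one judgment call is to notice that only the lower bound $\alpha-1$ (and not the upper bound $\beta-1$) enters the final inequality, because the sign-indefinite term can only hurt when $\mathbf{n}_k\cdot\mathbf{n}_k-1$ is negative; this is precisely why the statement asks $\alpha_0>2\gamma|\alpha-1|$ rather than imposing a condition involving $\beta$. A minor refinement would be to use the Poincaré inequality to replace $\|\mathbf{v}\|_0^2$ by a constant multiple of $\|\nabla\mathbf{v}\|_0^2$ and thus permit a slightly larger $\gamma$, but this is not needed for the stated result.
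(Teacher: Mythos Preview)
Your proposal is correct and follows essentially the same approach as the paper: decompose $a^c(\mathbf{v},\mathbf{v})$ into the three pieces, drop the non-negative square $4\gamma\|\mathbf{n}_k\cdot\mathbf{v}\|_0^2$, invoke the coercivity of $a(\cdot,\cdot)$, bound the sign-indefinite term from below using $\mathbf{n}_k\cdot\mathbf{n}_k-1\ge\alpha-1$, and absorb $\|\mathbf{v}\|_0^2$ into $\|\mathbf{v}\|_1^2$ to arrive at $\beta_0=\alpha_0-2\gamma|\alpha-1|$. The paper's proof is identical in structure and in the resulting constant.
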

\begin{proof}
    Note that
    \begin{displaymath}
        \begin{aligned}
            a^c(\mathbf{v},\mathbf{v}) &= a(\mathbf{v},\mathbf{v})+4\gamma\|\mathbf{n}_k\cdot\mathbf{v}\|_0^2
        +2\gamma\langle \mathbf{n}_k\cdot\mathbf{n}_k-1, \mathbf{v}\cdot\mathbf{v}\rangle_0\\
            &\ge a(\mathbf{v},\mathbf{v})
        +2\gamma\langle \mathbf{n}_k\cdot\mathbf{n}_k-1, \mathbf{v}\cdot\mathbf{v}\rangle_0.
        \end{aligned}
    \end{displaymath}
    By the assumption that $a(\mathbf{v},\mathbf{v})\ge \alpha_0\|\mathbf{v}\|_1^2$ for some $\alpha_0>0$,
    we have
    \begin{displaymath}
            a^c(\mathbf{v},\mathbf{v})
            \ge \alpha_0 \|\mathbf{v}\|_1^2
        +2\gamma\langle \mathbf{n}_k\cdot\mathbf{n}_k-1, \mathbf{v}\cdot\mathbf{v}\rangle_0.
    \end{displaymath}
    Moreover, since $\mathbf{n}_k\cdot\mathbf{n}_k\ge \alpha$ and $\alpha- 1\le 0$, we get
    \begin{displaymath}
            2\gamma\langle \mathbf{n}_k\cdot\mathbf{n}_k-1, \mathbf{v}\cdot\mathbf{v}\rangle_0 \ge
        2\gamma(\alpha-1)\|\mathbf{v}\|_0^2
            \ge 2\gamma(\alpha-1)\|\mathbf{v}\|_1^2.
    \end{displaymath}
    Thus, by taking $\beta_0=\alpha_0-2\gamma|\alpha-1|>0$, we obtain the desired coercivity property. \qed
\end{proof}

The condition $\alpha_0 > 2\gamma |\alpha-1|$ in Lemma \ref{lem:acoer-aug} indicates a limit on the value of $\gamma$ to ensure the solvability of the augmented system \eqref{eq:newton-aug}.
However, it is desirable to use large values of $\gamma$ to achieve better control of the Schur complement. We therefore choose to employ a Picard
iteration to solve the nonlinear problem, omitting the term $2\gamma\langle \mathbf{n}_k\cdot\mathbf{n}_k-1, \mathbf{v}\cdot\mathbf{v}\rangle_0$
from the linearized equations. This yields the linearized problem: find $(\delta\mathbf{n}_h, \delta\lambda_h) \in V_h \times Q_h$ such that
\begin{equation}\label{eq:modified-newton-aug}
    \begin{aligned}
        {a}^{m}(\delta\mathbf{n}_h, \mathbf{v}_h) + b(\mathbf{v}_h, \delta\lambda_h) & = {F}^c(\mathbf{v}_h)
        \quad \forall \mathbf{v}_h\in V_h, \\
        b(\delta\mathbf{n}_h, \mu_h) &= G(\mu_h)
        \quad \forall \mu_h \in Q_h,
    \end{aligned}
\end{equation}
with the modified bilinear form
\begin{equation}
    {a}^m(\mathbf{u},\mathbf{v}) = a(\mathbf{u},\mathbf{v}) + 4\gamma\langle \mathbf{n}_k\cdot\mathbf{u}, \mathbf{n}_k\cdot \mathbf{v}\rangle_0
\end{equation}
to be solved at each nonlinear iteration.
This ensures that the $(1, 1)$-block is coercive with a coercivity constant independent of $\gamma$.
Moreover, in contrast to the situation with the Navier--Stokes equations, numerical experiments indicate that the use of this Picard
requires \emph{fewer} nonlinear iterations to converge to a given tolerance than using the full Newton linearization (see Section \ref{sec:pbc}).

The corresponding matrix form of the variational problem \eqref{eq:newton-aug} becomes
\begin{equation}\label{eq:saddle-aug}
    \begin{bmatrix}
        A + \gamma A_* & B^\top\\
        B & 0
    \end{bmatrix}
    \begin{bmatrix}
        U\\
        P
    \end{bmatrix}=
    \begin{bmatrix}
        f+ \gamma l\\
        g
    \end{bmatrix},
\end{equation}
where $A_*$ is the assembly of
$4\langle \mathbf{n}_k\cdot\mathbf{u}, \mathbf{n}_k\cdot \mathbf{v}\rangle_0$
and $l$ denotes the assembly of $- 2\langle \mathbf{n}_k\cdot\mathbf{n}_k-1, \mathbf{n}_k\cdot\mathbf{v}\rangle_0$.
Note that compared to the non-augmented version \eqref{eq:saddle}, the $(1,1)$ block in \eqref{eq:saddle-aug} has an additional semi-definite term $\gamma A_*$ with a large coefficient $\gamma$.
Its sparsity pattern remains unchanged.
We will construct a robust multigrid method to solve this top-left block in Section \ref{sec:mg}.

Since the unit-length constraint is enforced exactly in \eqref{eq:aug-lag},
the continuous solutions to minimizing both \eqref{eq:aug-lag} and \eqref{eq:lag} are the same.
However, the unit-length constraint is not enforced exactly in our finite element
discretization, and hence this stabilization does change the computed discrete solution.

\begin{remark}
    When applying the augmented Lagrangian strategy, one can apply it before discretization or afterwards.
    In this work we apply the continuous penalization, as it improves the enforcement of the nonlinear
    constraint, as shown later in Section \ref{sec:improv}.
    This is different to the approach considered in \cite{benzi-2006-article, farrell-mitchell-2018-article}
    for the stationary Navier--Stokes equations, where the discrete AL stabilization was used to yield a
    system that has the same solution but a better Schur complement.
\end{remark}

\section{Approximation to the Schur complement}\label{sec:schur}

The Schur complement of the augmented director block in \eqref{eq:saddle-aug} is given by
\begin{equation*}
    {S}_\gamma = -B A_\gamma^{-1} B^\top = -B(A+\gamma A_*)^{-1}B^\top.
\end{equation*}
We now proceed to analyze this Schur complement by following similar techniques to those of \cite[\S 4]{heister-2012-article}. We will show that $A_*$ is equal to the matrix arising from the \emph{discrete} AL stabilization (which controls the Schur complement) plus a perturbation that vanishes as the mesh is refined.

Let $\Pi_{Q_h}: Q\rightarrow Q_h$ be the orthogonal $L^2$ projection operator, i.e.,
\begin{equation*}
    \langle p-\Pi_{Q_h}p, q\rangle_0 =0 \quad\forall q\in Q_h.
\end{equation*}
We define the fluctuation operator $\kappa\coloneqq {I}-\Pi_{Q_h}$ where ${I}:Q\rightarrow Q$ is the identity mapping.
Therefore, one has
\begin{equation*}
    \langle \kappa(p), q\rangle_0 =0 \quad\forall q\in Q_h.
\end{equation*}
For $\mathbf{u}_h,\mathbf{v}_h\in V_h$, one can split the term $4\langle \mathbf{n}_k \cdot \mathbf{u}_h, \mathbf{n}_k \cdot \mathbf{v} \rangle_0$ into the following terms using the properties of $\kappa$ and $\Pi_{Q_h}$:
\begin{equation*}
    \begin{aligned}
        4\langle \mathbf{n}_k \cdot \mathbf{u}, \mathbf{n}_k \cdot \mathbf{v} \rangle_0
    &= \langle \Pi_{Q_h}(2 \mathbf{n}_k\cdot\mathbf{n}), 2\mathbf{n}_k\cdot\mathbf{v}\rangle_0 + \langle \kappa(2\mathbf{n}_k\cdot\mathbf{u}), 2\mathbf{n}_k\cdot\mathbf{v}\rangle_0\\
    &= \langle \Pi_{Q_h}(2 \mathbf{n}_k\cdot\mathbf{n}), (\Pi_{Q_h}+\kappa)(2\mathbf{n}_k\cdot\mathbf{v})\rangle_0 + \langle \kappa(2\mathbf{n}_k\cdot\mathbf{u}), (\Pi_{Q_h}+\kappa)(2\mathbf{n}_k\cdot\mathbf{v})\rangle_0\\
    &= \langle \Pi_{Q_h}(2 \mathbf{n}_k\cdot\mathbf{u}), \Pi_{Q_h}(2\mathbf{n}_k\cdot\mathbf{v})\rangle_0 + \langle\kappa(2\mathbf{n}_k\cdot\mathbf{u}),\kappa(2\mathbf{n}_k\cdot\mathbf{v})\rangle_0.
    \end{aligned}
\end{equation*}
Note here that the assembly of the first term is $B^\top M_\lambda^{-1}B$, where $M_\lambda$ is the mass matrix associated with the finite element space for the multiplier $Q_h$. This can then be readily used with the Sherman--Morrison--Woodbury formula to derive an approximation of the Schur complement.
The second term $\langle\kappa(2\mathbf{n}_k\cdot\mathbf{u}),\kappa(2\mathbf{n}_k\cdot\mathbf{v})\rangle_0$ characterizes the difference between ${A}_*$ and $B^\top M_\lambda^{-1}B$.
The next result shows that it vanishes as the mesh size $h\rightarrow 0$ (see Theorem \ref{thm:remaining-vanish}) and thus, in this limit, the tractable term $B^\top M_\lambda^{-1}B$ dominates ${A}_*$.

\begin{theorem}\label{thm:remaining-vanish}
    Let $(\delta\mathbf{n}_h, \delta\lambda_h)\in V_h\times Q_h$ be the solution of the augmented discrete system \eqref{eq:modified-newton-aug} with corresponding degrees of freedom $(U, P)\in\mathbb{R}^n\times\mathbb{R}^m$.
    Then, for the Newton linearization at a given approximation $(\mathbf{n}_k, \lambda_k)$ satisfying $\alpha \le |\mathbf{n}_k|^2\le \beta$ with $0< \alpha \le 1\le \beta$ and $|\nabla\mathbf{n}_k|$ bounded pointwise a.e., we have
    \begin{equation*}
        \left\|\left(A_* - B^\top M_\lambda^{-1}B\right)U\right\|_{\mathbb{R}^n} \le C h^{1+\frac{d}{2}}\|\delta\mathbf{n}_h\|_1,
    \end{equation*}
    where $\|\cdot\|_{\mathbb{R}^n}$ denotes the Euclidean norm.
\end{theorem}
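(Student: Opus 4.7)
The plan is to exploit the splitting that has already been established, namely
$$
4\langle \mathbf{n}_k\cdot\mathbf{u},\mathbf{n}_k\cdot\mathbf{v}\rangle_0 = \langle \Pi_{Q_h}(2\mathbf{n}_k\cdot\mathbf{u}),\Pi_{Q_h}(2\mathbf{n}_k\cdot\mathbf{v})\rangle_0 + \langle \kappa(2\mathbf{n}_k\cdot\mathbf{u}),\kappa(2\mathbf{n}_k\cdot\mathbf{v})\rangle_0,
$$
together with the identification of $B^\top M_\lambda^{-1}B$ as the assembly of the projected term. Consequently, for any $\mathbf{v}_h\in V_h$ with DOF vector $V\in\mathbb{R}^n$, I have the variational representation
$$
V^\top (A_* - B^\top M_\lambda^{-1}B)\,U \;=\; \langle \kappa(2\mathbf{n}_k\cdot\mathbf{v}_h),\,\kappa(2\mathbf{n}_k\cdot\delta\mathbf{n}_h)\rangle_0.
$$
The strategy is then to characterize the Euclidean norm on the left as a supremum over $V$, bound the right-hand side with Cauchy--Schwarz in $L^2$, estimate each fluctuation factor via an approximation property of $\kappa$, and finally convert the $\mathbf{v}_h$ factor from FE-function norms to the Euclidean norm on DOFs using an inverse inequality and mass matrix scaling.

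Concretely, I would first take $\|(A_*-B^\top M_\lambda^{-1}B)U\|_{\mathbb{R}^n} = \sup_{V\neq 0} V^\top(A_*-B^\top M_\lambda^{-1}B)U/\|V\|_{\mathbb{R}^n}$ and apply Cauchy--Schwarz to the displayed inner product. Next, since $Q_h$ contains (at least) the piecewise constants, $\Pi_{Q_h}$ is locally the $L^2$-best constant approximation, so the standard Bramble--Hilbert/Poincar\'e argument on each cell yields the fluctuation estimate
$$
\|\kappa(p)\|_0 \le C\,h\,\|\nabla p\|_0 \qquad \forall p\in H^1(\Omega).
$$
Applying this with $p=2\mathbf{n}_k\cdot\mathbf{w}$ for $\mathbf{w}\in\{\mathbf{v}_h,\delta\mathbf{n}_h\}$ and using the product rule together with the hypotheses $|\mathbf{n}_k|\le\sqrt{\beta}$ and $|\nabla\mathbf{n}_k|\in L^\infty$, I obtain $\|\kappa(2\mathbf{n}_k\cdot\mathbf{w})\|_0 \le C\,h\,\|\mathbf{w}\|_1$.

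Combining these bounds gives
$$
\|(A_*-B^\top M_\lambda^{-1}B)U\|_{\mathbb{R}^n} \le C\,h^2\,\|\delta\mathbf{n}_h\|_1 \sup_{V\neq 0} \frac{\|\mathbf{v}_h\|_1}{\|V\|_{\mathbb{R}^n}}.
$$
For the remaining supremum I invoke the inverse inequality $\|\mathbf{v}_h\|_1 \le C h^{-1}\|\mathbf{v}_h\|_0$ (valid on a quasi-uniform mesh with a conforming FE space) and the scaling $\|\mathbf{v}_h\|_0 \le C h^{d/2}\|V\|_{\mathbb{R}^n}$ coming from the spectral bound $\lambda_{\max}(M_V)\le C h^d$ for the velocity mass matrix under a locally supported nodal basis. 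Together these yield $\|\mathbf{v}_h\|_1/\|V\|_{\mathbb{R}^n} \le C h^{d/2-1}$, whence $h^2\cdot h^{d/2-1}=h^{1+d/2}$, matching the stated bound.

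The routine parts are Cauchy--Schwarz, the product-rule estimate, and the mass-matrix scaling; the only place where I would be careful is justifying that the first-order fluctuation bound is available under the precise choice of $Q_h$ used in the discretization (which is fine as soon as $Q_h\supseteq\mathbb{P}_0$), and that the inverse inequality and DOF-to-$L^2$ equivalence are applied on a shape-regular, quasi-uniform mesh so that the constants are indeed $h$-uniform. These are standard assumptions in the preceding finite-element setup, so no further obstruction is expected.
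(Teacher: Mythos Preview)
Your proposal is correct and follows essentially the same route as the paper: represent the Euclidean norm as a supremum over test DOF vectors, identify the bilinear form with the fluctuation term $\langle\kappa(2\mathbf{n}_k\cdot\mathbf{v}_h),\kappa(2\mathbf{n}_k\cdot\delta\mathbf{n}_h)\rangle_0$, apply Cauchy--Schwarz, and combine the Cl\'ement-type projection estimate $\|\kappa(p)\|_0\le Ch\|p\|_1$ with the DOF-to-$L^2$ scaling $\|\mathbf{v}_h\|_0\preceq h^{d/2}\|V\|_{\mathbb{R}^n}$.

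The only (harmless) difference is that you apply the fluctuation estimate \emph{symmetrically} to both factors, gaining $h^2$, and then have to spend one power of $h$ via the inverse inequality $\|\mathbf{v}_h\|_1\le Ch^{-1}\|\mathbf{v}_h\|_0$ to reach the DOF norm. The paper instead treats the two factors asymmetrically: it uses only the trivial bound $\|\kappa\|\le C$ on the $\mathbf{v}_h$ side (so that $\|\kappa(2\mathbf{n}_k\cdot\mathbf{v}_h)\|_0\le C\|\mathbf{v}_h\|_0\le Ch^{d/2}\|V\|_{\mathbb{R}^n}$ directly) and reserves the $h$-gaining Cl\'ement estimate for the $\delta\mathbf{n}_h$ side. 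This avoids the inverse inequality altogether, but yields the identical power $h^{1+d/2}$. Your detour costs nothing in the final bound and relies only on standard quasi-uniformity assumptions already in force.
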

\begin{proof}
    Assuming $\mathbf{v}_h\in V_h$ and using the basis representations in $V_h=\mathrm{span}\{\phi_i\}$ for $\delta\mathbf{n}_h$ and $\mathbf{v}_h$:
    \begin{equation*}
        \delta\mathbf{n}_h = \sum_{i=1}^{n} U_i\phi_i, \quad v_h = \sum_{i=1}^n Y_i\phi_i,
    \end{equation*}
    we obtain
    \begin{equation*}
        \begin{aligned}
            \left\|\left(A_* - B^\top M_\lambda^{-1}B\right)U\right\|_{\mathbb{R}^n}
            &= \sup_{\|Y\|_{\mathbb{R}^n}=1} Y^\top \left(A_* - B^\top M_\lambda^{-1}B\right)U\\
            &=\sup_{\|Y\|_{\mathbb{R}^n}=1} \langle\kappa(2\mathbf{n}_k\cdot\delta\mathbf{n}_h),\kappa(2\mathbf{n}_k\cdot\mathbf{v}_h)\rangle_0\\
            &\le \sup_{\|Y\|_{\mathbb{R}^n}=1} \|\kappa(2\mathbf{n}_k\cdot\delta\mathbf{n}_h)\|_0 \|\kappa(2\mathbf{n}_k\cdot\mathbf{v}_h)\rangle_0\|_0\\
            &\le \underbrace{\|\kappa\|}_{G_1} \underbrace{\sup_{\|Y\|_{\mathbb{R}^n}=1} \|2\mathbf{n}_k\cdot\mathbf{v}_h\|_0}_{G_2}  \underbrace{\|\kappa(2\mathbf{n}_k\cdot\delta\mathbf{n}_h)\|_0}_{G_3}
        \end{aligned}
    \end{equation*}
    by applying the Cauchy--Schwarz inequality.

    One readily sees that $G_1\le C_1$ for a certain constant $C_1$ from the continuity of $\kappa$.
    Furthermore, we write
    \begin{equation*}
        G_2 = \sup_{\mathbf{v}_h}\frac{\|2\mathbf{n}_k\cdot\mathbf{v}_h\|_0}{\|Y\|_{\mathbb{R}^n}}.
    \end{equation*}
    Note that \cite[Theorem 3.43]{knabner-book} as used in \cite{heister-2012-article} gives the relation between the discrete vector $Y$ and its associated continuous function $\mathbf{v}_h$:
    \begin{equation*}
        \|Y\|_{\mathbb{R}^n} \ge C_r h^{-\frac{d}{2}}\|\mathbf{v}_h\|_0,
    \end{equation*}
    for some $C_r>0$.
    Then with the fact that $\mathbf{n}_k$ is bounded we have
    \begin{equation*}
        G_2 \le \sup_{\mathbf{v}_h} \frac{\|2\mathbf{n}_k\cdot\mathbf{v}_h\|_0}{C_r h^{-\frac{d}{2}}\|\mathbf{v}_h\|_0} \le C_2 h^{\frac{d}{2}}.
    \end{equation*}
    Moreover, \cite[Theorem 1]{clement-1975-article} implies
    \begin{equation*}
        \|\kappa(p)\|_0 = \|p - \Pi_{Q_h}p\|_0 \le C_4 h\|p\|_1 \quad \text{for } p\in H^1(\Omega),
    \end{equation*}
    we deduce the following $L^2$-projection error estimate
    \begin{equation*}
        G_3 = \|\kappa(2\mathbf{n}_k\cdot\delta\mathbf{n}_h)\|_0 \le C_4 h \|2\mathbf{n}_k\cdot\delta\mathbf{n}_h\|_1 \le C_3 h \|\delta\mathbf{n}_h\|_1.
    \end{equation*}
    Note here we have used the pointwise boundedness of $\mathbf{n}_k, \nabla\mathbf{n}_k$ a.e.\ and the fact that $\delta\mathbf{n}_h\in V_h\subset H^1(\Omega)$.

    Combining these estimates regarding $G_1,G_2, G_3$, we find
    \begin{equation*}
        \left\|\left(A_* - B^\top M_\lambda^{-1}B\right)U\right\|_{\mathbb{R}^n} \le C h^{1+\frac{d}{2}}\|\delta\mathbf{n}_h\|_1 \to 0\quad \text{as } h\to 0.
    \end{equation*}
    \qed
\end{proof}

This result suggests the use of the algebraic approximation
\begin{equation}
    \label{eq:mod-schur}
    S_\gamma \approx -B\left(A+\gamma B^\top M_\lambda^{-1} B \right)^{-1}B^\top.
\end{equation}
The reason for doing so is that we can straightforwardly calculate the inverse of this
approximation \eqref{eq:mod-schur} by the Sherman--Morrison--Woodbury formula as follows:
\begin{equation*}
    S_\gamma^{-1} = -BA^{-1}B^\top - \gamma M_\lambda^{-1} = S^{-1} - \gamma M_\lambda^{-1}.
\end{equation*}
The solver requires the action of $S_\gamma^{-1}$, i.e., solving linear systems involving $S_\gamma$.
For large $\gamma$, a simple and effective approach is to employ the approximation
\begin{equation} \label{eq:mod-schur-inverse}
S_\gamma^{-1} \approx - \gamma M_\lambda^{-1}.
\end{equation}
On the infinite-dimensional level, the effect of the augmented Lagrangian term is to
make $-\gamma^{-1} I$ ($I$ the identity operator on the multiplier space) an effective approximation
for the Schur complement \cite[Lemma 3]{polyak1974}. When discretized, this indicates
that the weighted multiplier mass matrix $-\gamma^{-1} M_\lambda$ will be an effective approximation for
$S_\gamma$, with the approximation improving as $\gamma \to \infty$.

In fact, the approximation of the inverse of the discretely augmented Schur complement \eqref{eq:mod-schur-inverse} can be improved further
by combining $-\gamma M_{\lambda}^{-1}$ with a good approximation of the unaugmented Schur complement $S$ \cite{he2018}.
Given an approximation $\tilde{S}$ of $S$, we employ
\begin{equation} \label{eq:firstapprox}
S_\gamma^{-1} \approx \tilde{S}_\gamma^{-1} = \tilde{S}^{-1} - \gamma M_\lambda^{-1}.
\end{equation}
It is therefore of interest to consider the Schur complement of the unaugmented problem.
In the context of the Stokes equations, the Schur complement is spectrally equivalent to the viscosity-weighted pressure mass matrix \cite{silvester-1994-article, wathen-1991-article,elman-2005-book}.
Following similar techniques, an approximation can be obtained by proving that $BA^{-1}B^\top$ is spectrally equivalent to $M_{\lambda}$
for the equal-constant nematic case. This gives us good insight into the choice of $\tilde{S}^{-1}$.

\begin{theorem}\label{thm:schur-approx}
    For equal-constant nematic LC problems without augmented Lagrangian stabilization,
    the matrix $BA^{-1}B^\top$ arising from the Newton-linearized system is spectrally equivalent to the multiplier mass matrix $M_\lambda$, under the same assumptions as in Lemma \ref{lem:acoer:dis}.
\end{theorem}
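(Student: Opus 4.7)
The plan is to run the classical Rayleigh-quotient argument used for the Stokes pressure Schur complement, but with the bilinear forms specialized to the equal-constant nematic setting. First I would observe that setting $K_1=K_2=K_3=K$ and $q_0=0$ makes $\mathbf{Z}=\mathbf{I}$, kills every $(K_2-K_3)$ cross term, and kills every $q_0$ term, so that the director bilinear form collapses to
\begin{equation*}
    a(\mathbf{u},\mathbf{v}) = K\langle\nabla\cdot\mathbf{u},\nabla\cdot\mathbf{v}\rangle_0 + K\langle\nabla\times\mathbf{u},\nabla\times\mathbf{v}\rangle_0 + 2\langle\lambda_k\mathbf{u},\mathbf{v}\rangle_0,
\end{equation*}
while $b(\mathbf{v},p) = \langle p,2\mathbf{n}_k\cdot\mathbf{v}\rangle_0$ is unchanged. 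Under the same assumptions as in Lemma \ref{lem:acoer:dis} (in particular $\lambda_k\ge 0$ a.e.\ and $|\mathbf{n}_k|\le\sqrt{\beta}$), Lemma \ref{lem:acoer:dis} with $q_0=0$ trivially gives coercivity $a(\mathbf{v},\mathbf{v})\ge\alpha_0\|\mathbf{v}\|_1^2$, and Cauchy--Schwarz together with pointwise boundedness of $\lambda_k$ gives continuity $a(\mathbf{u},\mathbf{v})\le C_A\|\mathbf{u}\|_1\|\mathbf{v}\|_1$, with both constants independent of $h$.

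Next I would record the two properties of $b$ that are needed. Continuity follows directly from the pointwise bound $|\mathbf{n}_k|\le\sqrt{\beta}$ and Cauchy--Schwarz, yielding $|b(\mathbf{v}_h,\mu_h)|\le C_b\|\mathbf{v}_h\|_1\|\mu_h\|_0$. The discrete inf-sup lower bound
\begin{equation*}
    \sup_{\mathbf{v}_h\in V_h\setminus\{0\}} \frac{b(\mathbf{v}_h,\mu_h)}{\|\mathbf{v}_h\|_1} \ge \beta_b\|\mu_h\|_0 \quad\forall \mu_h\in Q_h
\end{equation*}
is inherited from the stability results cited in the excerpt (Lemma 2.5.14 of \cite{emerson-phd} for the $([\mathbb{Q}_m]^d\oplus B_F)$-$\mathbb{Q}_0$ pair and Theorem 4.5 of \cite{hu-2009-article} for $[\mathbb{P}_1]^2$-$\mathbb{P}_1$, with the enrichment argument extending to $[\mathbb{P}_2]^2$-$\mathbb{P}_1$), so $\beta_b$ is also mesh-independent.

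The rest is algebra. For any nonzero $P\in\mathbb{R}^m$ representing $\mu_h\in Q_h$, the Rayleigh characterization
\begin{equation*}
    P^\top B A^{-1} B^\top P = \sup_{\mathbf{v}_h\ne 0}\frac{b(\mathbf{v}_h,\mu_h)^2}{a(\mathbf{v}_h,\mathbf{v}_h)}
\end{equation*}
combined with coercivity of $a$ in the denominator and continuity of $b$ in the numerator yields the upper bound $P^\top B A^{-1} B^\top P \le (C_b^2/\alpha_0)\,P^\top M_\lambda P$. Conversely, continuity of $a$ in the denominator together with the inf-sup bound on $b$ in the numerator gives $P^\top B A^{-1} B^\top P \ge (\beta_b^2/C_A)\,P^\top M_\lambda P$. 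These two inequalities are exactly the spectral equivalence claimed.

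The one genuine obstacle is the mesh-uniformity of the constants, and specifically the discrete inf-sup constant: if the chosen pair $V_h$-$Q_h$ is not stable, $\beta_b$ degenerates with $h$ and the lower bound collapses. All other constants ($\alpha_0$, $C_A$, $C_b$) depend only on $K$, $\Omega$, and the $L^\infty$ bounds on $\mathbf{n}_k$ and $\lambda_k$, so their $h$-independence is routine; it is really the cited discrete stability results that do the heavy lifting.
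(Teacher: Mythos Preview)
Your proposal is correct and follows essentially the same Rayleigh-quotient route as the paper: the discrete inf-sup condition gives the lower spectral bound and continuity of $b$ gives the upper one. The paper streamlines the algebra slightly by working directly in the energy norm $\|\mathbf{u}_h\|_{lc}^2 \coloneqq a(\mathbf{u}_h,\mathbf{u}_h)$ (so that $U^\top A U$ appears directly and no separate coercivity/continuity constants $\alpha_0,C_A$ for $a$ are needed) and by writing the equal-constant director form as $K\langle\nabla\mathbf{u},\nabla\mathbf{v}\rangle_0 + 2\langle\lambda_k,\mathbf{u}\cdot\mathbf{v}\rangle_0$ rather than the div--curl form; both choices are cosmetic and your argument goes through unchanged.
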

\begin{proof}
    For the equal-constant model with Dirichlet boundary conditions $\mathbf{n}=\mathbf{g}\in H^{1/2}(\partial\Omega;\mathbb{S}^2)$,
    its corresponding Lagrangian is
    \begin{displaymath}
        {\mathcal{L}}(\mathbf{n},\lambda) = \frac{K}{2}\langle \nabla\mathbf{n},\nabla\mathbf{n}\rangle_0
        +\langle \lambda,\mathbf{n}\cdot\mathbf{n}-1\rangle_0.
    \end{displaymath}
    After Newton linearization and introducing conforming finite dimensional spaces $V_h\subset \mathbf{H}^1_0(\Omega)$ and $Q_h \subset L^2(\Omega)$,
    the discrete variational problem is to find $\delta\mathbf{n}_h\in V_h$, $\delta\lambda_h\in Q_h$ satisfying
    \begin{equation*}
        \begin{split}
            K \langle \nabla\delta\mathbf{n}_h,\nabla\mathbf{v}_h\rangle_0
            + &2\langle \lambda_k, \delta\mathbf{n}_h\cdot\mathbf{v}_h \rangle_0
            + 2\langle \delta\lambda_h, \mathbf{n}_k\cdot\mathbf{v}_h\rangle_0 \\
            &= -K\langle \nabla\mathbf{n}_k\cdot\nabla\mathbf{v}_h\rangle_0 -2\langle \lambda_k,\mathbf{n}_k\cdot\mathbf{v}_h\rangle_0
            \quad \forall \mathbf{v}_h\in V_h,\\
            2 \langle \mu_h, \mathbf{n}_k\cdot\delta\mathbf{n}_h \rangle_0 &= -\langle \mu_h,\mathbf{n}_k\cdot\mathbf{n}_k-1\rangle_0
        \quad\forall \mu_h\in Q_h,
        \end{split}
    \end{equation*}
    where $\mathbf{n}_k$ and $\lambda_k$ represent the current approximations to $\mathbf{n}$ and $\lambda$, respectively.
    This can be rewritten in block matrix form as
    \begin{equation*}
        \mathcal{A}
        \begin{bmatrix}
            U\\
            P
        \end{bmatrix} \coloneqq
        \begin{bmatrix}
            A & B^\top\\
            B & 0
        \end{bmatrix}
        \begin{bmatrix}
            U\\
            P
        \end{bmatrix}=
        \begin{bmatrix}
            f \\
            g
        \end{bmatrix},
    \end{equation*}
    where as before $U\in \mathbb{R}^n$ and $P\in \mathbb{R}^m$ are the unknown coefficients of the discrete director update and the discrete Lagrange multiplier update
    with respect to the basis functions in $V_h$ and $Q_h$,
    and $A$ denotes the symmetric form $K\langle \nabla\delta\mathbf{n}_h,\nabla \mathbf{v}_h\rangle_0 + 2\langle \lambda_k,\delta\mathbf{n}_h\cdot\mathbf{v}_h\rangle_0$.
    The coercivity property of the bilinear form from Lemma \ref{lem:acoer:dis} ensures that $A$ is positive definite.

    The coefficient matrix $\mathcal{A}$ is symmetric and indefinite
    (resulting in $\mathcal{A}$ possessing both positive and negative eigenvalues).
    Moreover, $\mathcal{A}$ is non-singular if and only if $B$ has full row rank, which can be deduced from the discrete inf-sup condition.

    Denote
    \begin{equation*}
        \begin{aligned}
            \|\mathbf{u}_h\|^2_{lc} &= K\langle \nabla\mathbf{u}_h,\nabla\mathbf{u}_h\rangle_0
        +\langle \lambda_k,2\mathbf{u}_h\cdot\mathbf{u}_h\rangle_0,\\
            \|{\mu}_h\|^2_0 &= \langle \mu_h,\mu_h\rangle_0.
        \end{aligned}
    \end{equation*}
    Notice that the validity of the first norm follows from the assumed pointwise non-negativity of $\lambda_k$.

    For a stable mixed finite element, from the inf-sup condition, there exists a positive constant $c$ independent of the mesh size $h$ such that
    \begin{equation*}
        \underset{\mathbf{u}_h\in V_h\backslash \{0\}}{\mathrm{sup}} \frac{\langle \mu_h, 2\mathbf{n}_k \cdot\mathbf{u}_h\rangle_0}{\|\mathbf{u}_h\|_{lc}} \ge c\|\mu_h\|_0
        \quad \forall \mu_h \in Q_h,
    \end{equation*}
    leading to its matrix form
    \begin{displaymath}
        \underset{U\in\mathbb{R}^n\backslash \{0\}}{\mathrm{max}} \frac{P^\top BU}{[U^\top AU]^{1/2}} \ge c[P^\top M_\lambda P]^{1/2}
        \quad \forall P\in\mathbb{R}^m.
    \end{displaymath}
    Thus, we have
    \begin{displaymath}
        \begin{aligned}
            c[P^\top M_\lambda P]^{1/2} &\le \underset{U\in\mathbb{R}^n\backslash \{0\}}{\mathrm{max}} \frac{P^\top BU}{[U^\top AU]^{1/2}}\\
            &= \underset{z=A^{1/2}U\ne 0}{\mathrm{max}}\frac{P^\top BA^{-1/2}z}{[z^\top z]^{1/2}} \\
            &= (P^\top B A^{-1}B^\top P)^{1/2}\quad \forall P\in \mathbb{R}^m,
        \end{aligned}
    \end{displaymath}
    where the maximum is attained at $z=(P^\top BA^{-1/2})^\top$.
    It yields
    \begin{equation}\label{sub1}
        c^2\frac{ P^\top M_\lambda P}{P^\top P} \le \frac{ P^\top B A^{-1}B^\top P}{P^\top P}
        \quad\forall P\in \mathbb{R}^m\backslash \{0\}.
    \end{equation}

    Regardless of the stability of the finite element pair, we can deduce from the boundedness of $B$ that there exists a positive constant $c_1$ such that
    \begin{displaymath}
        P^\top BU \le c_1 [P^\top M_\lambda P]^{1/2}[U^\top AU]^{1/2}\quad \forall U\in\mathbb{R}^n, \forall P\in \mathbb{R}^m.
    \end{displaymath}
    Hence,
    \begin{displaymath}
        \begin{aligned}
            c_1 [P^\top M_\lambda P]^{1/2} &\ge \underset{U\in\mathbb{R}^n\backslash \{0\}}{\mathrm{max}} \frac{P^\top BU}{[U^\top AU]^{1/2}}\\
            &= \underset{{z=A^{1/2}U\ne 0} }{\mathrm{max}}\frac{P^\top BA^{-1/2}z}{[z^\top z]^{1/2}} \\
            &= (P^\top B A^{-1}B^\top P)^{1/2} \quad \forall P\in \mathbb{R}^m,
        \end{aligned}
    \end{displaymath}
    where again the maximum is attained at $z=(P^\top BA^{-1/2})^\top$.
    This gives rise to
    \begin{equation}\label{sub2}
        \frac{P^\top B A^{-1}B^\top P}{P^\top M_\lambda P} \le c_1^2
        \quad \forall P\in\mathbb{R}^m\backslash \{0\}.
    \end{equation}

    Therefore for inf-sup stable finite element pairs, we have by \eqref{sub1} and \eqref{sub2}
    \begin{equation*}
        c^2\le \frac{P^\top B A^{-1}B^\top P}{P^\top M_\lambda P} \le c_1^2
        \quad \forall P\in\mathbb{R}^m\backslash \{0\}.
    \end{equation*}

    This indicates that $BA^{-1}B^\top$ is spectrally equivalent to $M_\lambda$. \qed
\end{proof}

\begin{remark}
    It follows from Theorem \ref{thm:schur-approx} that $\gamma=0$ should show mesh-independence (i.e., the average number of FGMRES iterations per Newton iteration does not deteriorate as one refines the mesh) in the case of equal-constant nematic LC.
    This can be observed in subsequent numerical experiments reported in Table \ref{table:allu-ell} (see the column where $\gamma=0$).
    One should also notice that such mesh-independence for $\gamma=0$ is also shown in Table \ref{table:allu-rec} for the non-equal-constant case,
    suggesting it has use outside the context of augmented Lagrangian methods also.
\end{remark}

Combining Theorem \ref{thm:schur-approx} with \eqref{eq:firstapprox}, our final approximation for $S_\gamma^{-1}$ is given by
\begin{equation}\label{eq:aug-schur-approx}
    S_\gamma^{-1} \approx \tilde{S}_\gamma^{-1} = -(1+\gamma) M_{\lambda}^{-1}.
\end{equation}

\section{Improvement of the constraint}\label{sec:improv}

We have now observed that the continuous AL form introduced in Section \ref{sec:cont-penal} can help control the Schur complement.
Another contribution of this AL stabilization is that it improves the discrete constraint as we increase the value of the penalty parameter $\gamma$.
An example of improving the linear divergence-free constraint in the Stokes system can be found in \cite[Section 5.1]{john-2017-article}.
In this section, we will use a similar strategy to show the improvement of the discrete constraint as $\gamma$ increases.

We restrict ourselves to the equal-constant case with \emph{constant} Dirichlet boundary conditions.
That is to say, we consider the Oseen--Frank model with Dirichlet boundary condition $\mathbf{n}|_{\partial\Omega}=\mathbf{g}$,
where $\mathbf{g}$ is a nonzero constant vector satisfying $|\mathbf{g}|=1$.
We use the $[\mathbb{P}_1]^d$-$\mathbb{P}_1$ finite element pair in this section,
so both the director $\mathbf{n}$ and the Lagrange multiplier $\lambda$ are approximated by continuous piecewise-linear polynomials.
For this section, we denote finite element spaces for the director and the Lagrange multiplier by $V_{h,g}\coloneqq V_h \cap\mathbf{H}^1_g(\Omega)$ and $Q_h \subset L^2(\Omega)$, respectively,
and denote $V_{h,0}= V_h\cap\mathbf{H}^1_0(\Omega)$.

We restate the associated nonlinear discrete variational problem as follows:
find $(\mathbf{n}_h, \lambda_h)\in V_{h,g}\times Q_h$ such that
\begin{subequations}\label{eq:aug-var}
    \begin{equation}\label{eq:1var}
        \begin{aligned}
            K &\langle \nabla\mathbf{n}_h, \nabla\mathbf{v}_h\rangle_0
            + Kq_0\langle \mathbf{v}_h,\nabla\times\mathbf{n}_h\rangle_0
            + Kq_0\langle \mathbf{n}_h, \nabla\times\mathbf{v}_h\rangle_0\\
            &+ 2\langle \lambda_h, \mathbf{n}_h\cdot\mathbf{v}_h\rangle_0
            + 2\gamma \langle \mathbf{n}_h\cdot\mathbf{n}_h-1, \mathbf{n}_h\cdot\mathbf{v}_h \rangle_0
            = 0 \quad \forall \mathbf{v}_h\in V_{h,0},
        \end{aligned}
    \end{equation}
    \begin{equation}\label{eq:2var}
        \langle \mu_h, \mathbf{n}_h\cdot\mathbf{n}_h-1\rangle_0
        = 0 \quad \forall \mu_h \in Q_h.
    \end{equation}
\end{subequations}
Take the test function $\mathbf{v}_h = \mathbf{n}_h-{\mathbf{g}}\in V_{h,0}$ in \eqref{eq:1var} to obtain
\begin{equation}\label{eq:ln}
    \begin{aligned}
        K \|\nabla\mathbf{n}_h\|^2_0
        &+ 2Kq_0\langle \mathbf{n}_h,\nabla\times\mathbf{n}_h\rangle_0
        + 2\langle \lambda_h, \mathbf{n}_h\cdot\mathbf{n}_h\rangle_0
        + 2\gamma \langle \mathbf{n}_h\cdot\mathbf{n}_h-1, \mathbf{n}_h\cdot\mathbf{n}_h \rangle_0\\
        &= Kq_0\langle \mathbf{g}, \nabla\times\mathbf{n}_h\rangle_0
        + 2\langle \lambda_h, \mathbf{n}_h\cdot\mathbf{g}\rangle_0
        + 2\gamma \langle \mathbf{n}_h\cdot\mathbf{n}_h-1, \mathbf{n}_h\cdot\mathbf{g} \rangle_0.
    \end{aligned}
\end{equation}
Note that in this step we have used the fact that since $\mathbf{g}$ is a constant vector,
its derivative is zero.

As \eqref{eq:2var} is valid for arbitrary $\mu_h\in Q_h$ and one can easily verify that $\mathbf{n}_h\cdot\mathbf{g}\in Q_h$,
we have
\begin{displaymath}
    \langle \mathbf{n}_h\cdot\mathbf{g}, \mathbf{n}_h\cdot\mathbf{n}_h-1 \rangle_0 =0.
\end{displaymath}
Then taking $\mu_h=1$ and $\mu_h=\lambda_h$ leads to
\begin{displaymath}
    \langle 1, \mathbf{n}_h\cdot\mathbf{n}_h-1\rangle_0 = 0\ \text{and}\
    \langle \lambda_h, \mathbf{n}_h\cdot\mathbf{n}_h-1\rangle_0 = 0,
\end{displaymath}
respectively.
Thus, \eqref{eq:ln} collapses to
\begin{equation}\label{eq:1}
    \begin{aligned}
        K \|\nabla\mathbf{n}_h\|^2_0
        + &2Kq_0\langle \mathbf{n}_h,\nabla\times\mathbf{n}_h\rangle_0
        + 2\langle \lambda_h, 1\rangle_0
        + 2\gamma \| \mathbf{n}_h\cdot\mathbf{n}_h-1 \|^2_0\\
        &= Kq_0\langle \mathbf{g}, \nabla\times\mathbf{n}_h\rangle_0
        + 2\langle \lambda_h, \mathbf{n}_h\cdot\mathbf{g}\rangle_0.
    \end{aligned}
\end{equation}
By the Cauchy--Schwarz and H\"{o}lder inequalities, we observe an upper bound for the right hand side of \eqref{eq:1}:
\begin{equation}\label{eq:2}
    \begin{aligned}
        Kq_0\langle \mathbf{g}, \nabla\times\mathbf{n}_h\rangle_0
        + 2\langle \lambda_h, \mathbf{n}_h\cdot\mathbf{g}\rangle_0
        &\le Kq_0 \|\nabla\times\mathbf{n}_h\|_0
        + 2 \|\lambda_h\|_0\|\mathbf{n}_h\|_0\\
        &\le \frac{Kq_0}{2}+\frac{Kq_0}{2}\|\nabla\times\mathbf{n}_h\|_0^2
        +\|\lambda_h\|^2_0 +\|\mathbf{n}_h\|^2_0.
    \end{aligned}
\end{equation}
Meanwhile, the left hand side of \eqref{eq:1} can be bounded from below:
\begin{equation}\label{eq:lhs}
    \begin{aligned}
        K \|\nabla\mathbf{n}_h\|^2_0
        &+ 2Kq_0\langle \mathbf{n}_h,\nabla\times\mathbf{n}_h\rangle_0
        + 2\langle \lambda_h, 1\rangle_0
        + 2\gamma \| \mathbf{n}_h\cdot\mathbf{n}_h-1 \|^2_0\\
        &\ge K\|\nabla\mathbf{n}_h\|_0^2
        -2Kq_0|\langle\mathbf{n}_h,\nabla\times\mathbf{n}_h\rangle_0|
        -2|\langle \lambda_h,1\rangle_0|
        + 2\gamma \| \mathbf{n}_h\cdot\mathbf{n}_h-1 \|^2_0\\
        &\ge K\|\nabla\mathbf{n}_h\|_0^2 - Kq_0\|\mathbf{n}_h\|_0^2 - Kq_0\|\nabla\times\mathbf{n}_h\|^2_0
        -\|\lambda_h\|^2_0 -|\Omega|
        + 2\gamma \| \mathbf{n}_h\cdot\mathbf{n}_h-1 \|^2_0,
    \end{aligned}
\end{equation}
where $|\Omega|$ denotes the measure of the domain $\Omega$.

Hence, by combining \eqref{eq:2} and \eqref{eq:lhs}, we have
\begin{equation}\label{eq:est}
    \begin{aligned}
        K\|\nabla\mathbf{n}_h\|^2_0
        - &(Kq_0+1)\|\mathbf{n}_h\|_0^2
        - \frac{3}{2}Kq_0\|\nabla\times\mathbf{n}_h\|^2_0 \\
        &-\|\lambda_h\|^2_0 + 2\gamma\|\mathbf{n}_h\cdot\mathbf{n}_h-1\|^2_0
        \le \frac{Kq_0}{2} + |\Omega|.
    \end{aligned}
\end{equation}
Since the right hand side of \eqref{eq:est} is a fixed constant independent of $\gamma$,
taking $\gamma$ larger value forces the constraint approximation error $\|\mathbf{n}_h\cdot\mathbf{n}_h-1\|_0$ to become smaller.
In fact, \eqref{eq:est} implies that $\|\mathbf{n}_h\cdot\mathbf{n}_h-1\|_0\le \mathcal{O}(\gamma^{-1/2})$.

\begin{remark}
    The technique shown in this section can be extended in a similar way to the multi-constant case; we omit the details here for brevity.
\end{remark}

\section{A robust multigrid method for $A_{\gamma}$}\label{sec:mg}

As discussed in Section \ref{sec:schur}, the addition of the augmented Lagrangian term has the effect of controlling the Schur complement of the matrix in \eqref{eq:saddle-aug}.
However, the tradeoff is that it complicates the solution of the top-left block $A_\gamma$, as it adds a semi-definite term with a large coefficient.
For the augmented Lagrangian strategy to be successful, we require a $\gamma$-robust solver for the
top-left block. Fortunately, a rich literature is available to guide the development of multigrid
solvers for nearly singular systems \cite{schoberl-1999-article, schoberl-1999-phd-thesis, lee-2007-article}.
In this section we develop a parameter-robust multigrid method for $A_\gamma$.

Sch\"{o}berl's seminal paper on the construction of parameter-robust multigrid schemes \cite{schoberl-1999-article} lists
two requirements that must be satisfied for robustness.
The first requirement is a parameter-robust relaxation method; this is achieved by developing a space decomposition
that stably captures the kernel of the semi-definite terms.
The second requirement is a parameter-robust prolongation operator, i.e.~one whose continuity constant is independent of
the parameters. This is achieved by (approximately) mapping kernel functions on coarse grids to kernel functions on fine grids.
We discuss both of these requirements below.

For ease of notation, we consider the two-grid method applied to the equal-constant nematic case, and use subscripts $h$ and $H$ to distinguish fine and coarse levels respectively.
That is to say, $V_H$ represents the coarse-grid function space and $A_{H,\gamma}:V_H\rightarrow V_H^*$ corresponds to the partial differential equations (PDEs) on $V_H$.

For the domain $\Omega$, we consider a non-overlapping triangulation $\mathcal{M}_H$, i.e.,
\begin{equation*}
    \cup_{T\in \mathcal{M}_H} T = \bar{\Omega}\ \text{and}\
    \mathrm{int}(T_i)\cap \mathrm{int}(T_j)=\emptyset \quad\forall T_i\ne T_j, \ T_i, T_j \in \mathcal{M}_H.
\end{equation*}
The fine grid $\mathcal{M}_h$ with $h=H/2$ is obtained by a regular refinement of the simplices in $\mathcal{M}_H$.
In what follows we consider both the $[\mathbb{P}_1]^d$-$\mathbb{P}_1$ and $[\mathbb{P}_2]^d$-$\mathbb{P}_1$ discretizations.

\subsection{Relaxation}\label{sec:relax}

After applying the AL method introduced in Section \ref{sec:cont-penal}, the discrete linear variational form corresponding to the top-left block
$A_{\gamma} = A+\gamma A_*$ is given by
\begin{equation}\label{eq:ah}
    \begin{aligned}
        a^m(\mathbf{u}_h,\mathbf{v}_h) &\coloneqq K \langle \nabla\mathbf{u}_h,\nabla\mathbf{v}_h\rangle_0
        +2 \langle \lambda_k, \mathbf{u}_h\cdot\mathbf{v}_h\rangle_0\\
        &\quad + 4\gamma\langle \mathbf{n}_k\cdot\mathbf{u}_h, \mathbf{n}_k\cdot\mathbf{v}_h \rangle_0,
    \end{aligned}
\end{equation}
with $\mathbf{u}_h\in V_h\subset\mathbf{H}^1_0(\Omega)$ being the trial function and $\mathbf{v}_h\in V_h$ the test function.
Note that $\mathbf{n}_k$ and $\lambda_k$ are the current approximations to the director $\mathbf{n}$
and the Lagrange multiplier $\lambda$, respectively, in the Newton iteration.
The first two terms of $a^m$ are symmetric and coercive
because of the uniform non-negativity of $\lambda_k$ in the assumption of our well-posedness result.
The kernel of the semi-definite term involving $\gamma$ is
\begin{equation}\label{eq:N*}
    \mathcal{N}_h =\{\mathbf{u}_h\in V_h: \mathbf{n}_k\cdot\mathbf{u}_h=0\ \text{a.e.}\}.
\end{equation}
In the case of $\gamma$ being very large, the variational problem involving \eqref{eq:ah} is nearly singular and common relaxation methods like Jacobi and Gauss--Seidel will not yield effective multigrid cycles, as we explain below.

Relaxation schemes can be devised in a generic way by considering \emph{space decompositions}
\begin{equation}\label{eq:decomp}
    {V}_h  = \sum_{i=1}^M {V}_i,
\end{equation}
where the sum of vector spaces on the right is not necessarily a direct sum \cite{xu-1992-article}.
This space decomposition induces a relaxation method by (approximately) solving the Galerkin projection
of the error equation onto each subspace $V_i$, and combining the resulting estimates of the error.
This can be done in an additive or multiplicative way. For example, if $V_h = \mathrm{span}(\phi_1, \dots, \phi_N)$, Jacobi and Gauss--Seidel are induced by the space decomposition
\begin{equation} \label{eq:space-decomp-jacobi}
V_i = \mathrm{span}(\phi_i),
\end{equation}
where the updates are performed additively for Jacobi and multiplicatively for Gauss--Seidel.
One of the key insights of \cite{schoberl-1999-article, lee-2007-article} was that the key requirement for parameter-robustness when applied to nearly singular problems is that the space decomposition must satisfy the
\emph{kernel-capturing property}
\begin{equation}\label{eq:kernel-capture}
    \mathcal{N}_h= \sum_{i=1}^M (V_i\cap \mathcal{N}_h),
\end{equation}
that is, any kernel function can be written as a sum of kernel functions drawn from the subspaces.
In particular, each subspace $V_i$ must be rich enough to support kernel functions; in our context, this is not satisfied by the choice \eqref{eq:space-decomp-jacobi}, accounting for its poor behaviour as $\gamma \to \infty$.

In the mesh triangulation $\mathcal{M}_h$,
we denote the \textit{star} of a vertex $v_i$ as the patch of elements sharing $v_i$, i.e.,
\begin{displaymath}
    \mathrm{star}(v_i)\coloneqq \underset{T\in \mathcal{M}_h:v_i\in T}{\bigcup}T.
\end{displaymath}
This induces an associated space decomposition, called the \emph{star patch}, by
\begin{equation*}
    V_i\coloneqq \{\mathbf{u}_h\in V_h: \mathrm{supp}(\mathbf{u}_h)\subset \mathrm{star}(v_i)\}.
\end{equation*}
This is illustrated in Figure \ref{fig:patch} (left).
We call the induced relaxation method a \textit{star iteration}. In effect, each subspace solve solves for the
degrees of freedom in the interior of the patch of cells, with homogeneous Dirichlet conditions on the boundary of the patch.
Given a vertex or edge midpoint $v_i$, we denote the \textit{point-block} patch $V_i$ as the span of the basis functions associated with degrees of freedom that evaluate a function at $v_i$ (see Figure \ref{fig:patch}, middle). The induced relaxation method solves for all colocated degrees of freedom simultaneously.
These two space decompositions coincide for the $[\mathbb{P}_1]^d$-$\mathbb{P}_1$ discretization.

\begin{figure}[!ht]
    \centering
    \begin{minipage}{0.3\textwidth}
        \centering
        \begin{tikzpicture}[scale=1.5]
            \draw [black, fill] (1,1) circle (1.5pt);
            \draw [black, fill] (1,0) circle (1.5pt);
            \draw [black, fill] (0,1) circle (1.5pt);
            \draw [black, fill] (0,0) circle (1.5pt);
            \draw [black, fill] (1,2) circle (1.5pt);
            \draw [black, fill] (2,1) circle (1.5pt);
            \draw [black, fill] (2,2) circle (1.5pt);

            \draw [black, fill] (0.5,0) circle (1.5pt);
            \draw [black, fill] (0,0.5) circle (1.5pt);
            \draw [black, fill] (0.5,0.5) circle (1.5pt);
            \draw [black, fill] (1,0.5) circle (1.5pt);
            \draw [black, fill] (1.5,0.5) circle (1.5pt);
            \draw [black, fill] (0.5,1) circle (1.5pt);
            \draw [black, fill] (1.5,1) circle (1.5pt);
            \draw [black, fill] (0.5,1.5) circle (1.5pt);
            \draw [black, fill] (1,1.5) circle (1.5pt);
            \draw [black, fill] (1.5,1.5) circle (1.5pt);
            \draw [black, fill] (2,1.5) circle (1.5pt);
            \draw [black, fill] (1.5,2) circle (1.5pt);

            \draw [black, line width=0.5mm] (0,0) -- (1, 0);
            \draw [black, line width=0.5mm] (0,0) -- (0,1);
            \draw [black, line width=0.5mm] (0,0) -- (2,2);
            \draw [black, line width=0.5mm] (0,1) -- (2, 1);
            \draw [black, line width=0.5mm] (1,0) -- (1,2);
            \draw [black, line width=0.5mm] (0,1) -- (1,2);
            \draw [black, line width=0.5mm] (1,2) -- (2, 2);
            \draw [black, line width=0.5mm] (2,2) -- (2,1);
            \draw [black, line width=0.5mm] (1,0) -- (2,1);

            \draw[rotate around={45:(1,1)},blue, line width=0.5mm] (1,1) ellipse (30pt and 15pt);
        \end{tikzpicture}
    \end{minipage}\hfill
    \begin{minipage}{0.3\textwidth}
        \centering
        \begin{tikzpicture}[scale=1.5]
            \draw [black, fill] (1,1) circle (1.5pt);
            \draw [black, fill] (1,0) circle (1.5pt);
            \draw [black, fill] (0,1) circle (1.5pt);
            \draw [black, fill] (0,0) circle (1.5pt);
            \draw [black, fill] (1,2) circle (1.5pt);
            \draw [black, fill] (2,1) circle (1.5pt);
            \draw [black, fill] (2,2) circle (1.5pt);

            \draw [black, fill] (0.5,0) circle (1.5pt);
            \draw [black, fill] (0,0.5) circle (1.5pt);
            \draw [black, fill] (0.5,0.5) circle (1.5pt);
            \draw [black, fill] (1,0.5) circle (1.5pt);
            \draw [black, fill] (1.5,0.5) circle (1.5pt);
            \draw [black, fill] (0.5,1) circle (1.5pt);
            \draw [black, fill] (1.5,1) circle (1.5pt);
            \draw [black, fill] (0.5,1.5) circle (1.5pt);
            \draw [black, fill] (1,1.5) circle (1.5pt);
            \draw [black, fill] (1.5,1.5) circle (1.5pt);
            \draw [black, fill] (2,1.5) circle (1.5pt);
            \draw [black, fill] (1.5,2) circle (1.5pt);

            \draw [black, line width=0.5mm] (0,0) -- (1, 0);
            \draw [black, line width=0.5mm] (0,0) -- (0,1);
            \draw [black, line width=0.5mm] (0,0) -- (2,2);
            \draw [black, line width=0.5mm] (0,1) -- (2, 1);
            \draw [black, line width=0.5mm] (1,0) -- (1,2);
            \draw [black, line width=0.5mm] (0,1) -- (1,2);
            \draw [black, line width=0.5mm] (1,2) -- (2, 2);
            \draw [black, line width=0.5mm] (2,2) -- (2,1);
            \draw [black, line width=0.5mm] (1,0) -- (2,1);

            \draw[blue, line width=0.5mm] (0.5,0.5) circle (5pt);
            \draw[blue, line width=0.5mm] (1,0.5) circle (5pt);
            \draw[blue, line width=0.5mm] (0.5,1) circle (5pt);
            \draw[blue, line width=0.5mm] (1,1) circle (5pt);
            \draw[blue, line width=0.5mm] (1.5,1) circle (5pt);
            \draw[blue, line width=0.5mm] (1,1.5) circle (5pt);
            \draw[blue, line width=0.5mm] (1.5,1.5) circle (5pt);
            \draw[blue, line width=0.5mm] (1,0) circle (5pt);
            \draw[blue, line width=0.5mm] (0,1) circle (5pt);
            \draw[blue, line width=0.5mm] (0,0.5) circle (5pt);
            \draw[blue, line width=0.5mm] (1,2) circle (5pt);
            \draw[blue, line width=0.5mm] (2,1) circle (5pt);
            \draw[blue, line width=0.5mm] (2,2) circle (5pt);
            \draw[blue, line width=0.5mm] (0.5,0) circle (5pt);
            \draw[blue, line width=0.5mm] (0.5,1.5) circle (5pt);
            \draw[blue, line width=0.5mm] (0,0) circle (5pt);
            \draw[blue, line width=0.5mm] (1.5,0.5) circle (5pt);
            \draw[blue, line width=0.5mm] (2,1.5) circle (5pt);
            \draw[blue, line width=0.5mm] (1.5,2) circle (5pt);

        \end{tikzpicture}
    \end{minipage}\hfill
    \begin{minipage}{0.3\textwidth}
        \centering
        \begin{tikzpicture}[scale=1.5]
            \draw [black, fill] (1,1) circle (1.5pt);
            \draw [black, fill] (1,0) circle (1.5pt);
            \draw [black, fill] (0,1) circle (1.5pt);
            \draw [black, fill] (0,0) circle (1.5pt);
            \draw [black, fill] (1,2) circle (1.5pt);
            \draw [black, fill] (2,1) circle (1.5pt);
            \draw [black, fill] (2,2) circle (1.5pt);

            \draw [black, line width=0.5mm] (0,0) -- (1, 0);
            \draw [black, line width=0.5mm] (0,0) -- (0,1);
            \draw [black, line width=0.5mm] (0,0) -- (2,2);
            \draw [black, line width=0.5mm] (0,1) -- (2, 1);
            \draw [black, line width=0.5mm] (1,0) -- (1,2);
            \draw [black, line width=0.5mm] (0,1) -- (1,2);
            \draw [black, line width=0.5mm] (1,2) -- (2, 2);
            \draw [black, line width=0.5mm] (2,2) -- (2,1);
            \draw [black, line width=0.5mm] (1,0) -- (2,1);

            \draw[blue, line width=0.5mm] (1,1) circle (5pt);
            \draw[blue, line width=0.5mm] (1,0) circle (5pt);
            \draw[blue, line width=0.5mm] (0,1) circle (5pt);
            \draw[blue, line width=0.5mm] (0,0) circle (5pt);
            \draw[blue, line width=0.5mm] (1,2) circle (5pt);
            \draw[blue, line width=0.5mm] (2,1) circle (5pt);
            \draw[blue, line width=0.5mm] (2,2) circle (5pt);

        \end{tikzpicture}
    \end{minipage}\hfill
    \caption{Illustrations of the star patch of the center vertex (left) and the point-block patch (middle) for the finite element pair $[\mathbb{P}_2]^2$-$\mathbb{P}_1$.
        Note that these two patches (right) are the same for $[\mathbb{P}_1]^2$-$\mathbb{P}_1$ discretization.
    Here, black dots represent the degrees of freedom, and the blue lines gather degrees of freedom solved for simultaneously in the relaxation.}
    \label{fig:patch}
\end{figure}
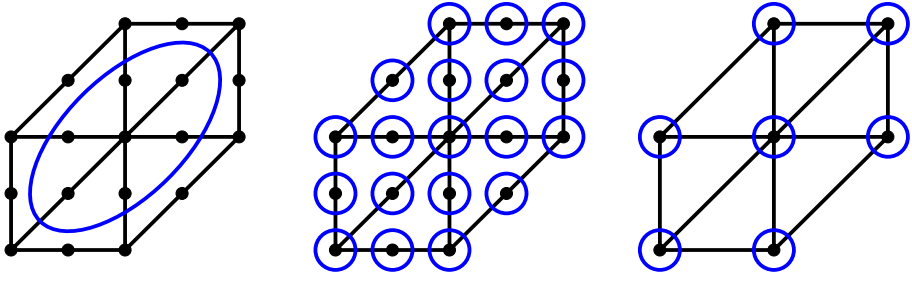

We now briefly explain why these two decompositions approximately satisfy the kernel-capturing condition \eqref{eq:kernel-capture} for the finite element pair $[\mathbb{P}_1]^d$-$\mathbb{P}_1$.
First, we define an approximate kernel
\begin{equation}\label{eq:Nhat}
    \tilde{\mathcal{N}}_h =\{\mathbf{u}_h\in V_h: \mathbf{n}_k\cdot \mathbf{u}_h=0\ \text{on each vertex}\}.
\end{equation}
Since $\mathbf{n}_k$ is the current approximation to the director $\mathbf{n}$, we have
$\mathbf{n}_k\in V_h=\sum_i V_i$.
We are therefore able to express $\mathbf{n}_k$ as $\mathbf{n}_k=\sum_i \mathbf{n}_k^i$,
where $\mathbf{n}_k^i\in V_i$ describes the function at the vertex $v_i$.
Similarly, we split $\mathbf{u}_h$ into $\mathbf{u}_h=\sum_i \mathbf{u}_h^i$ with $\mathbf{u}_h^i\in V_i$.
For each vertex $v_i$, the requirement $u_h \in \tilde{\mathcal{N}}_h$ yields
\begin{equation}\label{eq:local-cond}
    \mathbf{n}_k^i\cdot\mathbf{u}_h^i=0 \quad\forall i.
\end{equation}
The definition of $V_i$ ensures that $\mathbf{u}_h^i$ and $\mathbf{n}_k^i$ are only supported on the interior of the star of $v_i$.
We deduce that on each vertex
\begin{equation*}
    \mathbf{n}_k^j\cdot\mathbf{u}_h^i =0 \quad\forall i\ne j,
\end{equation*}
which yields
$ \sum_j\mathbf{n}_k^j\cdot\mathbf{u}_h^i =\mathbf{n}_k\cdot\mathbf{u}_h^i=0$.
Hence, $\mathbf{u}_h^i\in \tilde{\mathcal{N}}_h \forall i$ and we obtain the kernel-capturing condition \eqref{eq:kernel-capture} for the approximate kernel $\tilde{\mathcal{N}}_h$.

For the $[\mathbb{P}_2]^d$-$\mathbb{P}_1$ finite element pair, the satisfaction of the kernel-capturing property for the approximate kernel follows along similar lines.
For the point-block patch, \eqref{eq:local-cond} still holds.
The star patch uses larger subspaces, each one including multiple point-block patches, but it can be easily verified that \eqref{eq:local-cond} is still fulfilled.

\subsubsection{Robustness analysis of the approximate kernel}

While we are not able to prove the kernel capturing property for the kernel \eqref{eq:N*},
we can still obtain the spectral inequalities
\begin{equation}\label{eq:spectral}
    c_1 D_{h,\gamma} \le A_{h,\gamma} \le c_2 D_{h,\gamma},
\end{equation}
when using the approximate kernel \eqref{eq:Nhat}.
Here, $D_{h,\gamma}$ is the preconditioner to be specified later for the operator $A_{h,\gamma}$ and $C\le D$ represents $\|\mathbf{u}\|_C \le \|\mathbf{u}\|_D$ for all $\mathbf{u}$.
We prove that $c_1$ depends on $\gamma$, but the dependence can be well controlled so that the preconditioner is not badly affected by varying $\gamma$, while $c_2$ is always independent of $\gamma$.
For simplicity, we prove the case for the equal-constant nematic case with the $[\mathbb{P}_1]^d$-$\mathbb{P}_1$ discretization; extensions to the non-equal-constant cholesteric case 
and to the $[\mathbb{P}_2]^d$-$\mathbb{P}_1$ discretization are possible.

We define the operator associated to $a^m$, $A_{h,\gamma}: V_h\rightarrow V_h^*$, by
\begin{equation*}
    \langle A_{h,\gamma}\mathbf{u}_h, \mathbf{v}_h \rangle_0 \coloneqq a^m(\mathbf{u}_h, \mathbf{v}_h).
\end{equation*}

For the space decomposition $V_h= \sum_i V_i$, we denote the lifting operator (the natural inclusion) by $I_i:V_i\rightarrow V_h$
and choose the Galerkin subspace operator $A_i: V_i\rightarrow V_i$ to satisfy
\begin{equation*}
    \langle A_i \mathbf{u}_i, \mathbf{v}_i\rangle_0 \coloneqq \langle A_{h,\gamma}I_i\mathbf{u}_i, I_i\mathbf{v}_i\rangle_0
    \quad \forall \mathbf{u}_i, \mathbf{v}_i\in V_i.
\end{equation*}
This implies that $A_i = I_i^*A_{h,\gamma}I_i$.

The additive Schwarz preconditioner $D_{h,\gamma}$ for a problem $A_{h,\gamma}w_h=d_h$
associated with the space decomposition \eqref{eq:decomp}
is defined by the action of its inverse \cite{xu-1992-article}:
\begin{equation*}
    w_h = D_{h,\gamma}^{-1} d_h
\end{equation*}
given by
\begin{equation*}
    w_h = \sum_{i=1}^M I_i w_i,
\end{equation*}
with $w_i\in V_i$ being the unique solution of
\begin{equation*}
    \langle A_i w_i, v_i\rangle_0 = \langle d_h, I_iv_i\rangle_0 \quad \forall v_i\in V_i.
\end{equation*}
Hence, we can rewrite the preconditioning operator $D_{h,\gamma}^{-1}$ in operator form as
\begin{equation*}
    D_{h,\gamma}^{-1} = \sum_{i=1}^M I_iA_i^{-1}I^*_i.
\end{equation*}

We now state for completeness a classical result in the analysis of additive Schwarz preconditioners,
see e.g.~\cite[Theorem 3.1]{schoberl-1999-phd-thesis} and the references therein.
\begin{theorem}\label{thm:equalnorm}
    Define the splitting norm for $\mathbf{u}_h \in V_h$ as
    \begin{equation*}
    \vertiii{\mathbf{u}_h}^2 \coloneqq \underset{\substack{\mathbf{u}_h=\sum_i I_i\mathbf{u}_i \\ \mathbf{u}_i\in V_i}}{\inf} \sum_{i=1}^M \|\mathbf{u}_i\|_{A_i}^2.
    \end{equation*}
    This splitting norm is equal to the norm $\|\mathbf{u}_h\|_{D_{h,\gamma}}\coloneqq \langle D_{h,\gamma} \mathbf{u}_h, \mathbf{u}_h\rangle_0^{1/2}$
    generated by the additive Schwarz preconditioner,
    i.e.~it holds that
    \begin{equation*}
    \vertiii{\mathbf{u}_h}^2 = \|\mathbf{u}_h\|^2_{D_{h,\gamma}} \quad \forall \mathbf{u}_h\in V_h.
    \end{equation*}
\end{theorem}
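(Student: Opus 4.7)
The plan is to prove the two inequalities $\vertiii{\mathbf{u}_h}^2 \le \|\mathbf{u}_h\|^2_{D_{h,\gamma}}$ and $\|\mathbf{u}_h\|^2_{D_{h,\gamma}} \le \vertiii{\mathbf{u}_h}^2$ separately. This is a classical identity in additive Schwarz theory; the strategy is to exhibit a canonical decomposition that realizes the Schwarz norm, and then confirm its optimality by a Cauchy--Schwarz argument against an arbitrary competing decomposition.

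For the upper bound, I would construct an explicit admissible splitting. Given $\mathbf{u}_h \in V_h$, set $\mathbf{w}_h \coloneqq D_{h,\gamma}\mathbf{u}_h$ and define $\mathbf{u}_i^\star \coloneqq A_i^{-1} I_i^* \mathbf{w}_h \in V_i$. The formula $D_{h,\gamma}^{-1} = \sum_i I_i A_i^{-1} I_i^*$ immediately gives $\mathbf{u}_h = \sum_i I_i \mathbf{u}_i^\star$, so this is an admissible decomposition. A brief computation expanding $\|\mathbf{u}_i^\star\|_{A_i}^2 = \langle I_i^* \mathbf{w}_h, A_i^{-1} I_i^* \mathbf{w}_h\rangle_0$ and summing over $i$ collapses through the definition of $D_{h,\gamma}^{-1}$ to $\langle \mathbf{w}_h, D_{h,\gamma}^{-1}\mathbf{w}_h\rangle_0 = \|\mathbf{u}_h\|^2_{D_{h,\gamma}}$, so the infimum defining $\vertiii{\mathbf{u}_h}^2$ is at most $\|\mathbf{u}_h\|^2_{D_{h,\gamma}}$.

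For the matching lower bound, take any admissible decomposition $\mathbf{u}_h = \sum_i I_i \mathbf{u}_i$, pair against $\mathbf{w}_h = D_{h,\gamma}\mathbf{u}_h$, and rewrite each term using the symmetric positive definite square roots of $A_i$ as $\langle I_i^* \mathbf{w}_h, \mathbf{u}_i\rangle_0 = \langle A_i^{-1/2} I_i^* \mathbf{w}_h, A_i^{1/2} \mathbf{u}_i\rangle_0$. Cauchy--Schwarz applied termwise and then across the index $i$, together with the identity $\sum_i \|A_i^{-1/2} I_i^* \mathbf{w}_h\|_0^2 = \langle \mathbf{w}_h, D_{h,\gamma}^{-1}\mathbf{w}_h\rangle_0 = \|\mathbf{u}_h\|^2_{D_{h,\gamma}}$, produces
\[
\|\mathbf{u}_h\|^4_{D_{h,\gamma}} = \Big(\sum_i \langle I_i^* \mathbf{w}_h, \mathbf{u}_i\rangle_0\Big)^2 \le \|\mathbf{u}_h\|^2_{D_{h,\gamma}} \sum_i \|\mathbf{u}_i\|^2_{A_i}.
\]
Cancelling one factor and infimising over splittings gives $\|\mathbf{u}_h\|^2_{D_{h,\gamma}} \le \vertiii{\mathbf{u}_h}^2$, and combining with the upper bound yields equality. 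As a consistency check, equality in the Cauchy--Schwarz step is attained precisely at the canonical decomposition $\{\mathbf{u}_i^\star\}$, confirming that it is the minimizer.

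The argument is entirely algebraic and I do not anticipate any substantive obstacle. The only points requiring care are the bookkeeping of dual pairings (keeping in mind that $A_{h,\gamma} : V_h \to V_h^*$ while the paper conflates $V_h$ and $V_h^*$ via the $L^2$ identification in expressions such as $\langle A_i \mathbf{u}_i, \mathbf{v}_i\rangle_0$) and the positive definiteness of each $A_i$, which is needed for the fractional powers to be meaningful and which follows from the coercivity of $a^m$ restricted to each conforming subspace $V_i$.
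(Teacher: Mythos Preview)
Your proof is correct and is precisely the standard argument for this classical identity in additive Schwarz theory. The paper itself does not prove this theorem: it is stated ``for completeness'' with a citation to \cite[Theorem 3.1]{schoberl-1999-phd-thesis}, so there is no in-paper proof to compare against; your argument is essentially the one found in that reference.
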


To build intuition, let us examine why Jacobi relaxation defined by the space decomposition \eqref{eq:space-decomp-jacobi} is not robust as $\gamma \to \infty$.
With \eqref{eq:space-decomp-jacobi}, the decomposition $\mathbf{u}_h=\sum_i\mathbf{u}_i, \mathbf{u}_i\in V_i$ is unique.
It yields that
\begin{equation}\label{eq:j}
    \begin{aligned}
        \| \mathbf{u}_h\|^2_{D_{h,\gamma}}
        &= \vertiii{\mathbf{u}_h}^2
        = \sum_i \langle A_i\mathbf{u}_i, \mathbf{u}_i\rangle_0
        = \sum_i \langle A_{h,\gamma}\mathbf{u}_i, \mathbf{u}_i\rangle_0 \\
        &\preceq (1+\gamma) \sum_i\|\mathbf{u}_i\|_1^2
        \preceq \frac{1+\gamma}{h^2} \sum_i \|\mathbf{u}_i\|_0^2
        \preceq \frac{1+\gamma}{h^2} \|\mathbf{u}_h\|_0^2 \\
        &\preceq \frac{1+\gamma}{h^2} \|\mathbf{u}_h\|^2_{A_{h,\gamma}},
    \end{aligned}
\end{equation}
where $a\preceq b$ means that there exists a constant $c$ independent of $a$ and $b$ such that $a\le cb$.
Note that the bound in \eqref{eq:j} is parameter-dependent and deteriorates as $\gamma\rightarrow \infty$ or $h\rightarrow 0$.

In order to deduce the robustness result for our approximate kernel \eqref{eq:Nhat}, we first derive the following lemma.
\begin{lemma}\label{lem1}
    Let $\mathbf{u}_0=\sum_i \mathbf{u}_0^i \in \tilde{\mathcal{N}}_h$ and assume $\mathbf{n}_k\in [\mathbb{P}_1]^d$.
    Then it holds that
    \begin{equation*}
        \sum_i \| \mathbf{u}_0^i \cdot \mathbf{n}_k\|^2_{L^2(\Omega)} \preceq h^2 \|\mathrm{D}\mathbf{n}_k\|^2_{L^\infty(\Omega)} \| \mathbf{u}_0\|^2_{L^2(\Omega)},
    \end{equation*}
    where $\mathrm{D}\mathbf{n}_k$ denotes the Jacobian matrix of $\mathbf{n}_k$.
\end{lemma}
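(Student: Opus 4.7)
The plan is to exploit the fact that for the $[\mathbb{P}_1]^d$-$\mathbb{P}_1$ star-patch decomposition at an interior vertex $v_i$, the subspace $V_i$ consists precisely of the $d$ vector hat functions at $v_i$, so any $\mathbf{u}_0^i\in V_i$ has the form $\mathbf{u}_0^i=c_i\phi_i$ with $c_i\in\mathbb{R}^d$ and $\phi_i$ the scalar nodal hat function at $v_i$. Consequently the decomposition $\mathbf{u}_0=\sum_i\mathbf{u}_0^i$ is the unique nodal decomposition with $c_i=\mathbf{u}_0(v_i)$, and the approximate-kernel condition $\mathbf{u}_0\in\tilde{\mathcal{N}}_h$ translates (using \eqref{eq:Nhat}) into the vertex-wise constraint $\mathbf{n}_k(v_i)\cdot c_i = 0$ for every vertex $v_i$.

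Fixing a vertex $v_i$, I would next study the scalar field $\psi_i(x)\coloneqq c_i\cdot\mathbf{n}_k(x)$ restricted to $\mathrm{star}(v_i)$. Since $\mathbf{n}_k\in[\mathbb{P}_1]^d$, $\psi_i$ is piecewise linear on $\mathrm{star}(v_i)$, the kernel condition gives $\psi_i(v_i)=0$, and on each simplex $T$ containing $v_i$ we have the piecewise-constant gradient bound $|\nabla\psi_i|_T|\le|c_i|\,\|\mathrm{D}\mathbf{n}_k\|_{L^\infty(\Omega)}$. Writing $\psi_i(x)=\nabla\psi_i|_T\cdot(x-v_i)$ on each such $T$ and using $|x-v_i|\le C h$ throughout $\mathrm{star}(v_i)$ by quasi-uniformity yields the pointwise estimate
\begin{equation*}
|\psi_i(x)|\preceq h\,|c_i|\,\|\mathrm{D}\mathbf{n}_k\|_{L^\infty(\Omega)}\qquad\text{for a.e. }x\in\mathrm{star}(v_i).
\end{equation*}

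The final step is integration and summation. Combining the pointwise bound on $\psi_i$ with the standard estimate $\|\phi_i\|_{L^2(\Omega)}^2\preceq h^d$ gives
\begin{equation*}
\|\mathbf{u}_0^i\cdot\mathbf{n}_k\|_{L^2(\Omega)}^2=\int_{\mathrm{star}(v_i)}\phi_i^2\,\psi_i^2\,\dx\preceq h^{d+2}\,|c_i|^2\,\|\mathrm{D}\mathbf{n}_k\|_{L^\infty(\Omega)}^2.
\end{equation*}
Summing over $i$ and invoking the mass-matrix equivalence $\sum_i|c_i|^2\asymp h^{-d}\|\mathbf{u}_0\|_{L^2(\Omega)}^2$ (valid on a quasi-uniform mesh for the $[\mathbb{P}_1]^d$ nodal basis) collapses the right-hand side to $h^2\,\|\mathrm{D}\mathbf{n}_k\|_{L^\infty(\Omega)}^2\,\|\mathbf{u}_0\|_{L^2(\Omega)}^2$, as claimed. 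The main technical point is the passage from the vertex-wise kernel condition to a global pointwise bound on $\psi_i$; the linearity of $\psi_i$ on each simplex and the fact that $v_i$ is a vertex of every cell in $\mathrm{star}(v_i)$ makes this essentially a one-step Taylor expansion, so no further hypotheses on $\mathbf{n}_k$ beyond being piecewise $\mathbb{P}_1$ and Lipschitz are needed. Boundary vertices do not cause difficulty because $\mathbf{u}_0\in V_{h,0}$ (or truncated stars can be absorbed into constants via quasi-uniformity).
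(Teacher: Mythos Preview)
Your proposal is correct and follows essentially the same approach as the paper: both write $\mathbf{u}_0^i=\mathbf{u}_0(v_i)\phi_i$ for the scalar hat function $\phi_i$, use the vertex condition $\mathbf{n}_k(v_i)\cdot\mathbf{u}_0(v_i)=0$ together with the piecewise-linear (one-step Taylor) expansion of $\mathbf{n}_k$ about $v_i$ to gain the factor $h\|\mathrm{D}\mathbf{n}_k\|_{L^\infty}$, and then sum via an $L^2$ equivalence. The only cosmetic difference is that the paper writes the elementwise bound as $\|\mathbf{u}_0^i\cdot\mathbf{n}_k\|_{L^2(T)}\preceq h\|\mathrm{D}\mathbf{n}_k\|_{L^\infty(T)}\|\mathbf{u}_0^i\|_{L^2(T)}$ and sums using $\sum_i\|\mathbf{u}_0^i\|_{L^2}^2\preceq\|\mathbf{u}_0\|_{L^2}^2$, whereas you track the $h^d$ factors from $\|\phi_i\|_{L^2}^2$ and the nodal mass-matrix equivalence explicitly.
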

\begin{proof}
Consider the vertex $v_i$ on the boundary of an element $T$.
As $ \mathbf{n}_k\in [\mathbb{P}_1]^d$, we have
\begin{equation*}
    (\mathbf{u}_0^i \cdot \mathbf{n}_k) (x) = \mathbf{u}_0^i(x) \cdot \mathbf{n}_k(v_i) + \mathbf{u}^i_0(x) \cdot [ \mathrm{D} \mathbf{n}_k(v_i)(x-v_i)] \quad \forall x\in T.
\end{equation*}
Note that $\mathbf{u}_0^i \cdot \mathbf{n}_k$ vanishes at the vertex $v_i$ as $\mathbf{u}_0 \in \tilde{\mathcal{N}}_h$.
Moreover, we know that $\mathbf{u}_0^i(x)/\|\mathbf{u}_0^i(x)\|$ is constant on the interior of the patch around $v_i$, and $\mathbf{u}_0^i(x)$ is zero on the boundary of the patch,
since we can write $\mathbf{u}_0^i(x)=\mathbf{u}_0(v_i) \psi_i(x)$ with $\psi_i$ denoting the scalar piecewise linear basis function (vanishing outside the patch) associated with $v_i$.
Therefore, we can deduce $\mathbf{u}_0^i(x) \cdot \mathbf{n}_k(v_i)=0$ on $T$.
In addition, we have $\|x-v_i\| \preceq h$ on the element $T$. We thus conclude that
\begin{equation*}
    \| \mathbf{u}_0^i \cdot \mathbf{n}_k\|_{L^2(T)} \preceq h \|\mathrm{D}\mathbf{n}_k\|_{L^\infty(T)} \|\mathbf{u}^i_0\|_{L^2(T)}.
\end{equation*}
From this we are able to show that for both the star and point-block patches around $v_i$,
\begin{equation*}
    \begin{aligned}
    \sum_i \| \mathbf{u}_0^i \cdot \mathbf{n}_k\|_{L^2(\mathrm{patch}(v_i))}^2
    &\preceq \sum_i h^2 \|\mathrm{D}\mathbf{n}_k\|_{L^\infty(\mathrm{patch}(v_i))}^2 \|\mathbf{u}^i_0\|_{L^2(\mathrm{patch}(v_i))}^2 \\
    &\preceq h^2 \|\mathrm{D}\mathbf{n}_k\|^2_{L^\infty(\Omega)} \sum_i \|\mathbf{u}_0^i\|^2_{L^2(\Omega)}\\
    &\preceq h^2 \|\mathrm{D}\mathbf{n}_k\|^2_{L^\infty(\Omega)} \| \mathbf{u}_0\|^2_{L^2(\Omega)}.
    \end{aligned}
\end{equation*}
Therefore, with the local support of $\mathbf{u}_0^i$ we have
\begin{equation*}
    \sum_i \| \mathbf{u}_0^i \cdot \mathbf{n}_k\|^2_{L^2(\Omega)}
    = \sum_i \| \mathbf{u}_0^i \cdot \mathbf{n}_k\|_{L^2(\mathrm{patch}(v_i))}^2
    \preceq h^2 \|\mathrm{D}\mathbf{n}_k\|^2_{L^\infty(\Omega)} \| \mathbf{u}_0\|^2_{L^2(\Omega)}.
\end{equation*}
\qed
\end{proof}

We now derive the general form of the spectral bounds in \eqref{eq:spectral}.
This follows a similar approach to \cite[Theorem 4.1]{schoberl-1999-phd-thesis}, but with a different assumption on the splitting approximation, to allow for a dependence on $\gamma$.
For brevity of notation, we respectively denote the standard $L^2$, $H^1$ and $L^{\infty}$ norms by $\|\cdot\|_0$, $\|\cdot\|_1$ and $\|\cdot\|_\infty$. Given a space decomposition $V_h=\sum_i V_i$,
we define its \emph{overlap} $N_O$ as
\begin{equation*}
    N_O \coloneqq \max_{1\le i\le M} \sum_{j=1}^M g_{ij},
\end{equation*}
where
\begin{equation*}
    g_{ij} = \begin{cases}
        1 & \text{if}\ \exists \mathbf{v}_i\in V_i,\mathbf{v}_j\in V_j: |\mathrm{supp}(\mathbf{v}_i)\cap \mathrm{supp}(\mathbf{v}_j)|>0, \\
        0 & \text{otherwise}
    \end{cases}
\end{equation*}
measures the interaction between each subspace.

\begin{theorem}\label{thm:robust-est}
    Let $\{V_i\}$ be a subspace decomposition of $V_h$ with overlap $N_O$.
    Assume that the finite element pair $V_h$-$Q_h$ is inf-sup stable for the mixed problem
    \begin{equation*}
        \begin{aligned}
            B((\mathbf{u},\lambda); (\mathbf{v},\mu))
            &\coloneqq
        K\langle \nabla\mathbf{u},\nabla \mathbf{v}\rangle_0
        + 2 \langle \lambda, \mathbf{n}_k\cdot\mathbf{v}\rangle_0
        + 2\langle \mu, \mathbf{n}_k\cdot\mathbf{u}\rangle_0\\
            &= f(\mathbf{v},\mu)
        \quad \forall (\mathbf{v},\mu)\in V_h\times Q_h,
        \end{aligned}
    \end{equation*}
    where $f$ is a known functional.
Furthermore, assume that the function $\mathbf{u}_h\in V_h$ and the kernel function $\mathbf{u}_0\in {\mathcal{N}}_h$ can be split locally with estimates depending on the mesh size $h$ and possibly on $\gamma$ if the kernel-capturing property is not satisfied:
    \begin{equation*}
        \begin{aligned}
            \underset{\substack{\mathbf{u}_h=\sum_i \mathbf{u}_h^i\\ \mathbf{u}_h^i\in V_i}}{\mathrm{inf}} \sum_i \|\mathbf{u}_h^i\|^2_1 &\le c_1(h)\|\mathbf{u}_h\|_0^2, \\
            \underset{\substack{\mathbf{u}_0=\sum_i \mathbf{u}_0^i\\ \mathbf{u}_0^i\in V_i}}{\mathrm{inf}} \sum_i \|\mathbf{u}_0^i\|^2_{A_{h,\gamma}} &\le \left( c_2(h) + c_3(h,\gamma)\right) \|\mathbf{u}_0\|_0^2.
    \end{aligned}
    \end{equation*}
    Then the additive Schwarz preconditioner $D_{h,\gamma}$ built on the decomposition $\{V_i\}$ satisfies
    \begin{equation}\label{eq:spec-est}
        \left(c_1(h)+c_2(h) + c_3(h,\gamma) \right)^{-1}
        D_{h,\gamma} \le A_{h,\gamma}\le N_O D_{h,\gamma},
    \end{equation}
    with constants $c_1$ and $c_2$ independent of $\gamma$.
\end{theorem}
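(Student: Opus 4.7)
The plan is to prove the two inequalities separately, using Theorem~\ref{thm:equalnorm} to identify the preconditioner norm with the splitting norm
$\vertiii{\mathbf{u}_h}^2 = \inf_{\mathbf{u}_h=\sum_i I_i\mathbf{u}_i}\sum_i\|\mathbf{u}_i\|_{A_i}^2$. In both directions the task then reduces to producing, respectively, lower and upper bounds on this infimum in terms of $\|\mathbf{u}_h\|_{A_{h,\gamma}}^2$.

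For the upper bound $A_{h,\gamma}\le N_O D_{h,\gamma}$ I would take an arbitrary admissible decomposition $\mathbf{u}_h=\sum_i I_i\mathbf{u}_i$ and expand
$\|\mathbf{u}_h\|_{A_{h,\gamma}}^2 = \sum_{i,j} a^m(I_i\mathbf{u}_i,I_j\mathbf{u}_j)$.
Each row of the double sum contains at most $N_O$ non-zero entries, because $a^m(I_i\mathbf{u}_i,I_j\mathbf{u}_j)$ vanishes whenever the supports of the $V_i$ and $V_j$ basis functions are disjoint. A Cauchy--Schwarz argument in the $A_{h,\gamma}$ inner product then yields $\|\mathbf{u}_h\|_{A_{h,\gamma}}^2 \le N_O\sum_i\|\mathbf{u}_i\|_{A_i}^2$; taking the infimum over decompositions and applying Theorem~\ref{thm:equalnorm} gives the right-hand inequality of \eqref{eq:spec-est}. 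This part uses neither the inf-sup condition nor the splitting hypotheses and the constant $N_O$ is manifestly independent of $\gamma$ and $h$.

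For the lower bound I would exhibit one explicit admissible decomposition realising the claimed bound. The key idea is to first split $\mathbf{u}_h = \mathbf{u}_0+\mathbf{u}_1$ with $\mathbf{u}_0\in\mathcal{N}_h$ and $\mathbf{u}_1$ a non-kernel correction controlled by the constraint residual: using the inf-sup stability of $B(\cdot;\cdot)$ one constructs $\mathbf{u}_1\in V_h$ satisfying $\mathbf{n}_k\cdot\mathbf{u}_1=\mathbf{n}_k\cdot\mathbf{u}_h$ together with $\|\mathbf{u}_1\|_1 \preceq \|\mathbf{n}_k\cdot\mathbf{u}_h\|_0$, so that $\mathbf{u}_0 \coloneqq \mathbf{u}_h - \mathbf{u}_1$ lies in the kernel. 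Now apply the first splitting hypothesis to $\mathbf{u}_1$ and the second to $\mathbf{u}_0$, and combine the two patch-wise decompositions into a single decomposition of $\mathbf{u}_h$. The $\mathbf{u}_1$ pieces contribute $\sum_i\|\mathbf{u}_1^i\|_1^2\le c_1(h)\|\mathbf{u}_1\|_0^2$, and the $\mathbf{u}_0$ pieces contribute $(c_2(h)+c_3(h,\gamma))\|\mathbf{u}_0\|_0^2$ already in the $A_{h,\gamma}$ norm (so the $\gamma$-term is absorbed into $c_3$). Coercivity of $a^m$ on $V_h$ gives $\|\mathbf{u}_h\|_0^2 \preceq \|\mathbf{u}_h\|_{A_{h,\gamma}}^2$, while $\|\mathbf{u}_1\|_0 \preceq \|\mathbf{n}_k\cdot\mathbf{u}_h\|_0 \preceq \gamma^{-1/2}\|\mathbf{u}_h\|_{A_{h,\gamma}}$, so the $\gamma$ weighting in $A_{h,\gamma}$ cancels against the $\gamma^{-1}$ from the inf-sup bound, leaving $c_1,c_2$ independent of $\gamma$.

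The main technical obstacle is converting the $H^1$-bound furnished by the first splitting hypothesis into an $A_{h,\gamma}$-bound on the pieces $\mathbf{u}_1^i$, since the semi-definite $4\gamma\langle\mathbf{n}_k\cdot\mathbf{u}_1^i,\mathbf{n}_k\cdot\mathbf{u}_1^i\rangle_0$ term is not automatically small. This is where the inf-sup estimate $\|\mathbf{u}_1\|_0^2 \preceq \gamma^{-1}\|\mathbf{u}_h\|_{A_{h,\gamma}}^2$ does the crucial work: the sum $\sum_i\gamma\|\mathbf{n}_k\cdot\mathbf{u}_1^i\|_0^2$ is controlled by $\gamma\|\mathbf{u}_1\|_0^2 \preceq \|\mathbf{u}_h\|_{A_{h,\gamma}}^2$ up to overlap factors, so the $\gamma$-dependence does not enter $c_1$ or $c_2$. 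All remaining $\gamma$-dependence is confined to $c_3(h,\gamma)$, which reflects precisely the gap between the exact kernel $\mathcal{N}_h$ and the approximate kernel $\tilde{\mathcal{N}}_h$ quantified by Lemma~\ref{lem1}.
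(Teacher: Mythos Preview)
Your proposal is correct and follows essentially the same route as the paper: the upper bound via the overlap/Cauchy--Schwarz argument is exactly the content of the cited \cite[Lemma 3.2]{schoberl-1999-phd-thesis}, and for the lower bound the paper likewise splits $\mathbf{u}_h=\mathbf{u}_0+\mathbf{u}_1$ by solving the auxiliary mixed problem, uses inf-sup stability to get $\|\mathbf{u}_1\|_1\preceq\|\mathbf{n}_k\cdot\mathbf{u}_h\|_0\preceq\gamma^{-1/2}\|\mathbf{u}_h\|_{A_{h,\gamma}}$, and applies the two splitting hypotheses to $\mathbf{u}_1$ and $\mathbf{u}_0$ respectively. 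The only minor imprecision is your phrase ``up to overlap factors'' when bounding $\sum_i\gamma\|\mathbf{n}_k\cdot\mathbf{u}_1^i\|_0^2$: this bound comes not from overlap but from the first splitting hypothesis itself (the paper simply writes $\|\cdot\|_{A_{h,\gamma}}^2\preceq(1+\gamma)\|\cdot\|_1^2$ and then invokes the hypothesis), though the end result is the same.
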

\begin{proof}
    The upper bound can be directly given by \cite[Lemma 3.2]{schoberl-1999-phd-thesis} independent of the form of partial differential equations.
    
    For the lower bound, choose $\mathbf{u}_h\in V_h$ and split it into $\mathbf{u}_h = \mathbf{u}_0 + \mathbf{u}_1$,
    by solving
    \begin{equation}\label{eq:dec}
        B((\mathbf{u}_1,\lambda_1), (\mathbf{v}_h,\mu_h)) = 2\langle \mu_h, \mathbf{n}_k\cdot\mathbf{u}_h\rangle_0
        \quad \forall (\mathbf{v}_h,\mu_h)\in V_h\times Q_h.
    \end{equation}
    Testing with $\mathbf{v}_h=0$ in \eqref{eq:dec}, we obtain that
    \begin{equation*}
        \langle \mu_h, \mathbf{n}_k\cdot\mathbf{u}_1\rangle_0 =
        \langle \mu_h, \mathbf{n}_k\cdot\mathbf{u}_h\rangle_0 \quad\forall \mu_h\in Q_h.
    \end{equation*}
    Hence, $\mathbf{n}_k\cdot\mathbf{u}_0 = 0$ a.e., that is to say $\mathbf{u}_0\in {\mathcal{N}}_h$.

    By stability of the finite element pair $V_h$-$Q_h$, we have
    \begin{equation*}
        \begin{aligned}
            \|\mathbf{u}_1\|_1
            &\preceq \underset{\substack{\mathbf{v}_h\in V_h\\ \mu_h\in Q_h}}{\mathrm{sup}}
            \frac{B((\mathbf{u}_1,\lambda_1),(\mathbf{v}_h,\mu_h))}{\|(\mathbf{v}_h,\mu_h)\|}\\
            &\preceq \underset{\substack{\mathbf{v}_h\in V_h\\ \mu_h\in Q_h}}{\mathrm{sup}}
            \frac{\|\mathbf{n}_k\cdot\mathbf{u}_h\|_0 \|\mu_h\|_0}{\|(\mathbf{v}_h,\mu_h)\|}\\
            &\le \|\mathbf{n}_k\cdot\mathbf{u}_h\|_0.
        \end{aligned}
    \end{equation*}
    It implies further that
    \begin{equation*}
        \|\mathbf{u}_1\|_1\preceq \|\mathbf{u}_h\|_0
    \end{equation*}
    by the boundedness of $\mathbf{n}_k$ and
    \begin{equation*}
        \|\mathbf{u}_1\|_1\preceq \gamma^{-1/2}\|\mathbf{u}_h\|_{A_{h,\gamma}}
    \end{equation*}
    by the form of the operator $A_{h,\gamma}$, respectively.
    Using $\mathbf{u}_0=\mathbf{u}_h-\mathbf{u}_1$, we have in addition that
    \begin{equation*}
        \|\mathbf{u}_0\|_1\preceq\|\mathbf{u}_h\|_1.
    \end{equation*}

    We now calculate
    \begin{equation}
        \label{eq:cal}
        \begin{aligned}
            \|\mathbf{u}_h\|^2_{D_{h,\gamma}} &= \vertiii{\mathbf{u}_h}^2 \\
            &\le  \underset{\substack{\mathbf{u}_1=\sum_i \mathbf{u}_1^i\\ \mathbf{u}_1^i\in V_i}}{\mathrm{inf}} \sum_i \|\mathbf{u}_1^i \|^2_{A_{h,\gamma}}
            +\underset{\substack{\mathbf{u}_0=\sum_i \mathbf{u}_0^i \\ \mathbf{u}_0^i \in V_i}}{\mathrm{inf}} \sum_i \|\mathbf{u}_0^i\|^2_{A_{h,\gamma}} \\
            &\preceq  (1+\gamma) \underset{\substack{\mathbf{u}_1=\sum_i \mathbf{u}_1^i\\ \mathbf{u}_1^i\in V_i}}{\mathrm{inf}} \sum_i \|\mathbf{u}_1^i \|^2_1
            + \left(c_2(h)+c_3(h,\gamma) \right)  \|\mathbf{u}_0\|_0^2 \\
            &\preceq (1+\gamma)c_1(h)\|\mathbf{u}_1\|_0^2 +
            \left( c_2(h)+ c_3(h,\gamma)\right) \|\mathbf{u}_0\|_1^2 \\
            &\preceq (1+\gamma)c_1(h)\|\mathbf{u}_1\|^2_1 + \left( c_2(h)+c_3(h,\gamma) \right) \|\mathbf{u}_h\|_1^2\\
            &\preceq \left( c_1(h)+ c_2(h) + c_3(h, \gamma) \right) \|\mathbf{u}_h\|_{A_{h,\gamma}}^2,
        \end{aligned}
    \end{equation}
    completing the proof of the spectral estimates \eqref{eq:spec-est}. \qed
\end{proof}

\begin{remark}
    Note that in Theorem \ref{thm:robust-est}, if the kernel-capturing property \eqref{eq:kernel-capture} is satisfied,
    then $c_3$ will be zero.
    Hence, we will instead get a parameter-independent result.
\end{remark}

\begin{corollary}
    \label{cor1}
    In Theorem \ref{thm:robust-est}, if we take $V_h$-$Q_h$ to be constructed by the $[\mathbb{P}_1]^d$-$\mathbb{P}_1$ element, it holds that
    \begin{equation*}
        \left(c_1(h)+c_2(h) + \gamma h^2 \|\mathrm{D}\mathbf{n}_k\|_\infty^2 \right)^{-1}
        D_{h,\gamma} \le A_{h,\gamma}\le N_O D_{h,\gamma},
    \end{equation*}
    with constants $c_1(h)$, $c_2(h)\sim \mathcal{O}(h^{-2})$.
\end{corollary}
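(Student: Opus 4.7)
The plan is to apply Theorem \ref{thm:robust-est} directly, identifying the three constants $c_1(h)$, $c_2(h)$, $c_3(h,\gamma)$ for the $[\mathbb{P}_1]^d$-$\mathbb{P}_1$ decomposition. The upper bound $A_{h,\gamma}\le N_O D_{h,\gamma}$ is inherited verbatim from the theorem once one observes that, for a shape-regular mesh, each nodal patch overlaps only a mesh-independent number of neighbouring patches, so $N_O = \mathcal{O}(1)$. The substantive work lies in the two local splitting inequalities.

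For the first splitting estimate, I would use the canonical nodal decomposition $\mathbf{u}_h = \sum_i \mathbf{u}_h(v_i)\,\psi_i$, where $\psi_i$ is the $[\mathbb{P}_1]^d$ basis function at vertex $v_i$, and set $\mathbf{u}_h^i \coloneqq \mathbf{u}_h(v_i)\,\psi_i \in V_i$. A standard element-wise inverse inequality yields $\|\mathbf{u}_h^i\|_1 \preceq h^{-1}\|\mathbf{u}_h^i\|_0$, and the combination of bounded overlap and the scaling $\|\psi_i\|_0^2 \sim h^d$ with the norm equivalence $\|\mathbf{u}_h\|_0^2 \sim h^d \sum_i |\mathbf{u}_h(v_i)|^2$ delivers $\sum_i \|\mathbf{u}_h^i\|_0^2 \preceq \|\mathbf{u}_h\|_0^2$. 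Together these give $c_1(h) \sim h^{-2}$, as required.

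For the second splitting estimate, I would first observe the inclusion $\mathcal{N}_h \subseteq \tilde{\mathcal{N}}_h$ for this discretization: if $\mathbf{n}_k\cdot\mathbf{u}_0 = 0$ almost everywhere and both factors are piecewise linear, then the product, being piecewise quadratic, vanishes everywhere, in particular at the vertices. This brings any $\mathbf{u}_0 \in \mathcal{N}_h$ into the scope of Lemma \ref{lem:acoer:dis} once more, and crucially into the scope of Lemma \ref{lem1}. Taking the same nodal decomposition $\mathbf{u}_0 = \sum_i \mathbf{u}_0^i$ and expanding the $A_{h,\gamma}$-norm into its three contributions, the Dirichlet term $K\sum_i \|\nabla \mathbf{u}_0^i\|_0^2$ is handled by the inverse inequality and bounded overlap exactly as above, the $\lambda_k$-mass term is bounded by $\|\lambda_k\|_\infty \|\mathbf{u}_0\|_0^2$ (subdominant), and together these produce $c_2(h) \sim h^{-2}$. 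The augmented term $4\gamma \sum_i \|\mathbf{n}_k\cdot \mathbf{u}_0^i\|_0^2$ is exactly the quantity estimated in Lemma \ref{lem1}, furnishing the bound $c_3(h,\gamma) \preceq \gamma h^2 \|\mathrm{D}\mathbf{n}_k\|_\infty^2$.

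There is no genuine obstacle here: the corollary is essentially the book-keeping that assembles the generic machinery of Theorem \ref{thm:robust-est} with the sharp $\mathcal{O}(h^2)$ gain provided by Lemma \ref{lem1}. The one conceptual point worth emphasising is that, because the true kernel $\mathcal{N}_h$ is captured only approximately, the dependence on $\gamma$ cannot be eliminated altogether; instead it is mitigated by the factor $h^2 \|\mathrm{D}\mathbf{n}_k\|_\infty^2$. The preconditioner therefore remains robust so long as $\gamma h^2 \|\mathrm{D}\mathbf{n}_k\|_\infty^2$ stays controlled relative to the $h^{-2}$ terms, a regime that is numerically verified in Section \ref{sec:numerical}.
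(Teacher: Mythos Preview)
Your proposal is correct and follows essentially the same route as the paper: apply Theorem~\ref{thm:robust-est}, obtain $c_1(h),c_2(h)\sim h^{-2}$ from the inverse inequality on the nodal decomposition, and read off $c_3(h,\gamma)=\gamma h^2\|\mathrm{D}\mathbf{n}_k\|_\infty^2$ from Lemma~\ref{lem1}. Your explicit remark that $\mathcal{N}_h\subseteq\tilde{\mathcal{N}}_h$ is a useful clarification the paper leaves implicit; the stray reference to Lemma~\ref{lem:acoer:dis} appears to be a typo and plays no role in the argument.
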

\begin{proof}
    We follow the main argument of Theorem \ref{thm:robust-est}. We have only proven the kernel-capturing property for the approximate kernel \eqref{eq:Nhat} rather than \eqref{eq:N*}, and need to account for this in the estimates. From Lemma \ref{lem1} we have that
    \begin{equation*}
        c_3(h,\gamma) = \gamma h^2 \|\mathrm{D}\mathbf{n}_k\|_\infty^2.
    \end{equation*}

    With the choice of $V_h= [\mathbb{P}_1]^d$,
    we will use the so-called \emph{inverse inequality} (its proof can be found in any finite element book, e.g., \cite{ciarlet-fembook}) which states that
    \begin{equation*}
        \|\mathbf{v}_h\|_1 \preceq h^{-1}\|\mathbf{v}_h\|_0 \quad \forall \mathbf{v}_h\in V_h.
    \end{equation*}
    Therefore, it is straightforward to obtain that $c_1$ and $c_2$ are actually $\mathcal{O}(h^{-2})$.
    Notice here we have also used the form of $\|\cdot\|_{A_{h,\gamma}}$ in estimating $c_2(h)$.

    Finally, substituting the form of $c_3$ in \eqref{eq:cal}, we derive
    \begin{equation*}
            \|\mathbf{u}_h\|^2_{D_{h,\gamma}}
            \preceq \left( c_1(h)+ c_2(h) +\gamma h^2 \|\mathrm{D}\mathbf{n}_k\|_\infty^2 \right) \|\mathbf{u}_h\|_{A_{h,\gamma}}^2,
    \end{equation*}
    with constants $c_1(h)$, $c_2(h)\sim \mathcal{O}(h^{-2})$. \qed
\end{proof}

The above Corollary \ref{cor1} implies that we cannot entirely get rid of parameter $\gamma$ in the spectral estimates if the kernel-capturing property for the modified kernel \eqref{eq:N*} is not satisfied and instead we get an additional factor of $\gamma h^2 \|\mathrm{D}\mathbf{n}_k\|_\infty^2$.
However, this $\gamma$-dependence can be well controlled and does not impinge on the effectiveness of our smoother; the dependence improves as the mesh becomes finer or as $\mathbf{n}_k$ becomes smoother.

\subsection{Prolongation}

To construct a parameter-robust multigrid method, the prolongation operator is also required to be continuous (in the energy norm associated with the PDE) with the continuity constant independent of the penalty parameter $\gamma$ \cite[Theorem 4.2]{schoberl-1999-phd-thesis}.
In the context of the Oseen, Navier--Stokes, and linear elasticity equations, the prolongation operator was modified in order to guarantee that the continuity constant is $\gamma$-independent \cite{schoberl-1999-phd-thesis, benzi-2006-article, farrell-mitchell-2018-article}.
However, in our experiments with the Oseen--Frank system, we observe robust convergence with respect to $\gamma$, even when using the (cheaper) standard prolongation (see Section \ref{sec:num-results} for specific details).
This can be seen in Tables \ref{table:almg/star-equal} and \ref{table:almg/pbj-equal} of Section \ref{sec:numerical}, for example.
Hence, we will use the standard prolongation with no modification in this work.

\begin{remark}
    Since both discretizations $[\mathbb{P}_1]^d$-$\mathbb{P}_1$ and $[\mathbb{P}_2]^d$-$\mathbb{P}_1$ are nested, i.e., $V_H\subset V_h$, the standard prolongation $P_H$ is actually a continuous (in the $H^1$-norm) natural inclusion.
\end{remark}

\section{Numerical experiments}\label{sec:numerical}

\subsection{Algorithm details}

In the following numerical experiments, we use the $[\mathbb{P}_2]^3$-$\mathbb{P}_1$ element pair
and use flexible GMRES \cite{saad-1993-article} as the outermost linear solver,
since GMRES \cite{saad-1986-article} is applied in the multigrid relaxation.
An absolute tolerance of $10^{-8}$ was used for the nonlinear solver, except for the convergence rate tests in Figure \ref{fig:error-pbj}, which used $10^{-10}$. A relative tolerance of $10^{-4}$ was used for the inner linear solver.
We use the full block factorization preconditioner
\begin{equation*}
    P^{-1} =
    \begin{bmatrix}
        I & -\tilde{A}^{-1}_\gamma B^\top \\
        0 & I
    \end{bmatrix}
    \begin{bmatrix}
        \tilde{A}^{-1}_\gamma & 0\\
        0 & \tilde{S}_\gamma^{-1}
    \end{bmatrix}
    \begin{bmatrix}
        I & 0\\
        -B\tilde{A}^{-1}_\gamma & I
    \end{bmatrix},
\end{equation*}
where $\tilde{A}_\gamma^{-1}$ represents solving the top-left block $A_{\gamma}$ inexactly by our specialized multigrid algorithm
and the Schur complement approximation $\tilde{S}_\gamma^{-1}$ is given by \eqref{eq:aug-schur-approx}.
The multiplier mass matrix inverse $M_{\lambda}^{-1}$ is solved using Cholesky factorization.

For $\tilde{A}_{\gamma}^{-1}$, we perform a multigrid V-cycle, where the problem on the coarsest grid is solved exactly by Cholesky decomposition.
On each finer level, as relaxation we perform 3 GMRES iterations preconditioned by
the additive star (denoted as ALMG-STAR) iteration or additive point-block Jacobi (denoted as ALMG-PBJ) iteration.
In order to achieve convergence results independent of the number of cores used in parallel, we only report iteration counts using additive relaxation, although multiplicative ones generally give better convergence.

The solver described above is implemented in the Firedrake \cite{firedrake} library which relies on PETSc \cite{petsc} for solving linear systems.
The star and Vanka relaxation methods are implemented using the PCPATCH preconditioner recently included in PETSc \cite{pcpatch}.

\subsection{Numerical results}
\label{sec:num-results}

All the tests are executed on a computer with an Intel(R) Xeon(R) Silver 4116 CPU@2.10GHz processor.
We denote \#refs and \#dofs as the number of mesh refinements and degrees of freedom, respectively, in the following experiments.

\subsubsection{Periodic boundary condition in a square slab}\label{sec:pbc}

Following the nematic benchmarks in \cite{adler-2016-article}, we consider a generalized twist equilibrium configuration in a square $\Omega = [0,1]\times[0,1]$,
which is proven to have an analytical solution \cite{stewart-2004-book}.
We will investigate the robustness of the solver when applied to unequal Frank constants 
and nonzero cholesteric pitch.

The problem has periodic boundary conditions in the $x$-direction and Dirichlet boundary conditions in the $y$-direction, with values
\begin{displaymath}
    \begin{aligned}
        &\mathbf{n} = [\cos\theta_0, 0, -\sin \theta_0]^\top && \quad \text{on} \quad y=0,\\
        &\mathbf{n} = [\cos\theta_0, 0, \sin \theta_0]^\top && \quad \text{on} \quad y=1,
    \end{aligned}
\end{displaymath}
where $\theta_0={\pi}/8$.

We first consider parameter values $K_1=1.0$, $K_2=1.2$, $K_3=1.0$, $q_0=0$.
The exact solution is given by
\begin{displaymath}
    \mathbf{n} = [\cos(\theta_0(2y-1)), 0, \sin(\theta_0(2y-1))]^\top,
\end{displaymath}
with true free energy $2K_2\theta_0^2 \approx 0.37011$.
An example of the pure twist configuration is illustrated in Figure \ref{fig:rec}.

We use an initial guess of $\mathbf{n}_0=[1,0,0]^\top$ in the Newton iteration and a $10 \times 10$ mesh of triangles of negative slope as the coarse grid.

\begin{figure}[!ht]
    \centering
    \begin{minipage}{0.5\textwidth}
    \centering
        \includegraphics[scale=0.3, trim={7cm 8cm 2cm 8cm}, clip]{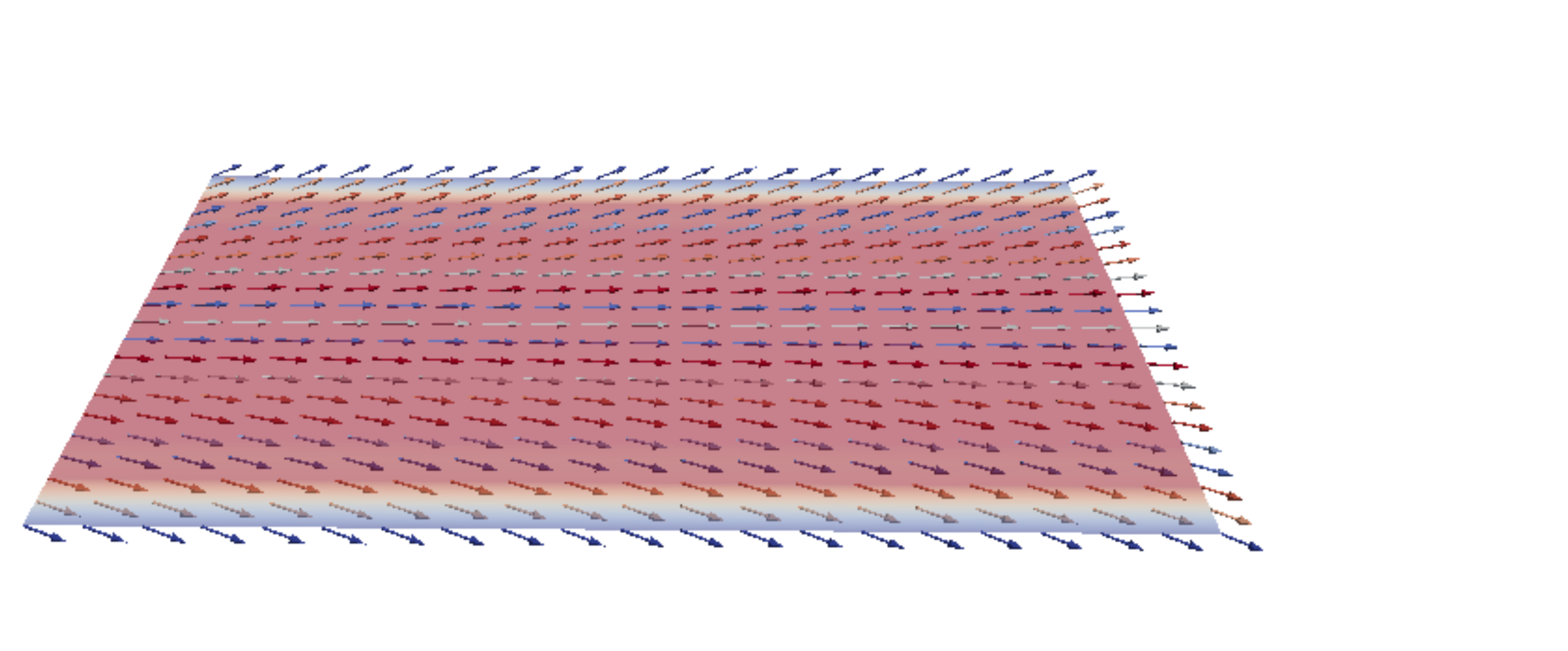}
    \end{minipage}
    \caption{A sample solution of the twist configuration. Colors represent the magnitude of directors.}
    \label{fig:rec}
\end{figure}

We first compare in Table \ref{table:cmp} the nonlinear convergence of the Newton linearization \eqref{eq:newton-aug} against that of the Picard iteration \eqref{eq:modified-newton-aug} we propose.
For these experiments we use the augmented Lagrangian preconditioner with ideal inner solvers (denoted as ALLU), i.e.~where the top-left block is solved exactly by LU factorization.
The Picard iteration requires substantially fewer nonlinear iterations for large $\gamma$. We expect that this relates to the degradation of the coercivity estimate given in Lemma \ref{lem:acoer-aug}, and will be analyzed in future work. Similar results were obtained on other test cases and we adopt the Picard iteration henceforth.

\begin{table}[!ht]
\centering
    \begin{tabular}{p{0.2\textwidth}crllll}
        \toprule
          & &  & \multicolumn{4}{c}{$\gamma$} \\ \cmidrule(lr){4-7}
          & \#refs & \#dofs & $10^3$ & $10^4$ & $10^5$ & $10^6$ \\
         \midrule
         \multirow{4}{*}{Newton}
          & 1 & 5,340 & 2.20 (5) & 1.14 (7) & 1.00 (10) & 1.00 (19) \\
         &2 & 21,080 & 3.20 (5) & 1.14 (7) & 1.00 (12) & 1.00 (15) \\
         &3 & 83,760 & 3.83 (6) & 1.57 (7) & 1.11 (9) & 1.00 (14) \\
         &4 & 333,920 & 4.67 (6) & 2.14 (7) & 1.00 (7) & 1.00 (11) \\
        &5 & 1,333,440 & 5.17 (6) & 2.43 (7) & 1.57 (7) & 1.00 (10)\\
        \bottomrule
         \multirow{4}{*}{Picard}
         &1 & 5,340 & 2.00 (5) & 1.20 (5) & 1.14 (7) & 1.11 (9)\\
         &2 & 21,080 & 3.00 (5) & 1.40 (5) & 1.17 (6) & 1.12 (8)\\
        &3 & 83,760 & 3.83 (6) & 2.00 (5) & 1.17 (6) & 1.14 (7)\\
        &4 & 333,920 & 4.67 (6) & 2.29 (7) & 1.14 (7) & 1.17 (6)\\
        &5 & 1,333,440 & 5.17 (6) & 2.57 (7) & 1.50 (8) & 1.17 (6)\\
        \bottomrule
    \end{tabular}
    \caption{A comparison of the nonlinear convergence of the Newton linearization \eqref{eq:newton-aug} and the Picard iteration \eqref{eq:modified-newton-aug}
    using ideal inner solvers for a nematic LC problem in a square slab.
    The table shows the average number of outer FGMRES iterations per nonlinear iteration and the total nonlinear iterations in brackets.}
    \label{table:cmp}
\end{table}

To see the efficiency of the Schur complement approximation \eqref{eq:aug-schur-approx} we used in Section \ref{sec:schur},
we give the number of Krylov iterations for ALLU in Table \ref{table:allu-rec}.
It can be observed that as $\gamma$ increases, the preconditioner becomes a better approximation to the real Jacobian inverse
and that the preconditioner is mesh-independent.

\begin{table}[!ht]
\centering
    \begin{tabular}{crllllllll}
        \toprule
         &  & \multicolumn{8}{c}{$\gamma$} \\ \cmidrule(lr){3-10}
        \#refs & \#dofs & 0 & 1 & 10 & $10^2$ & $10^3$ & $10^4$ & $10^5$ & $10^6$ \\
         \midrule
        1 & 5,340 & 10.40 & 9.20 & 8.00 & 5.40 & 2.00 & 1.20 & 1.14 & 1.11 \\
        2 & 21,080 & 14.20 & 13.20 & 9.20 & 5.80 & 3.00 & 1.40 & 1.17 & 1.12 \\
        3 & 83,760 & 4.75 & 4.75 & 6.75 & 6.40 & 3.83 & 2.00 & 1.17 & 1.14 \\
        4 & 333,920 & 5.50 & 4.50 & 7.25 & 7.20 & 4.67 & 2.29 & 1.14 & 1.17 \\
        5 & 1,333,440 & 5.25 & 3.75 & 5.75 & 7.00 & 5.17 & 2.57 & 1.50 & 1.17 \\
        \bottomrule
    \end{tabular}
    \caption{ALLU: The average number of FGMRES iterations per Newton iteration for a nematic LC problem
    in a square slab using $[\mathbb{P}_2]^3$-$\mathbb{P}_1$ discretization.}
    \label{table:allu-rec}
\end{table}

The performance of ALMG-STAR and ALMG-PBJ are illustrated in Tables \ref{table:almg/star} and \ref{table:almg/pbj}, respectively,
where both mesh-independence for $\gamma=10^6$ and $\gamma$-robustness are observed.

\begin{table}[!ht]
\centering
    \begin{tabular}{crllll}
        \toprule
         &  & \multicolumn{4}{c}{$\gamma$} \\ \cmidrule(lr){3-6}
         \#refs & \#dofs & $10^3$ & $10^4$ & $10^5$ & $10^6$\\
         \midrule
         1 & 5,340 & 2.60 (5) & 2.40 (5) & 2.29 (7) & 2.29 (7)\\
         2 & 21,080 & 4.20 (5) & 2.20 (5) & 2.50 (6) & 3.29 (7)\\
         3 & 83,760 & 8.00 (5) & 3.00 (5) & 2.33 (6) & 3.33 (6)\\
         4 & 333,920 & 11.60 (5) & 5.17 (6) & 2.17 (6) & 2.29 (7)\\
         5 & 1,333,440 & 15.20 (5) & 8.43 (7) & 3.14 (7) & 1.78 (9)\\
        \bottomrule
    \end{tabular}
    \caption{ALMG-STAR: the average number of FGMRES iterations per Newton iteration (total Newton iterations) for the nematic LC problem in a square slab.}
\label{table:almg/star}
\end{table}

\begin{table}[!ht]
\centering
\begin{tabular}{crp{1.35cm}lll}
        \toprule
        & & \multicolumn{4}{c}{$\gamma$} \\ \cmidrule(lr){3-6}
        \#refs & \#dofs & $10^3$ & $10^4$ & $10^5$ & $10^6$\\
        \midrule
         1 & 5,340 & 3.20 (5) & 2.60 (5) & 3.00 (6) & 3.57 (7)\\
         2 & 21,080 & 5.60 (5) & 2.60 (5) & 2.83 (6) & 3.71 (7)\\
         3 & 83,760 & 10.00 (5) & 3.80 (5) & 2.80 (5) & 3.00 (6)\\
        4 & 333,920 & 15.40 (5) & 7.00 (5) & 2.50 (6) & 2.83 (6)\\
        5 & 1,333,440 & $>$100 & 11.83 (6) & 5.00 (5) & 2.83 (6)\\
        \bottomrule
    \end{tabular}
    \caption{ALMG-PBJ: the average number of FGMRES iterations per Newton iteration (total Newton iterations) for the nematic LC problem in a square slab.}
\label{table:almg/pbj}
\end{table}

We also test the robustness of ALMG-STAR and ALMG-PBJ on other problem parameters, the twist elastic constant $K_2>0$ and the cholesteric pitch $q_0$.
To this end, we continue $K_2\in [0.2, 8]$ and $q_0\in [0,8]$ with step $0.1$. We fix $\gamma=10^6$, since it gives the best performance in Tables \ref{table:almg/star} and \ref{table:almg/pbj}.
The numerical results of ALMG-STAR and ALMG-PBJ in $K_2$- and $q_0$-continuation are shown in Figures \ref{fig:k2} and \ref{fig:q0}, respectively.
Clearly, a stable number of linear iterations is shown for both continuation experiments.

\begin{figure}[!ht]
    \centering
    \begin{minipage}{0.5\textwidth}
        \centering
        \includegraphics[width=1\textwidth]{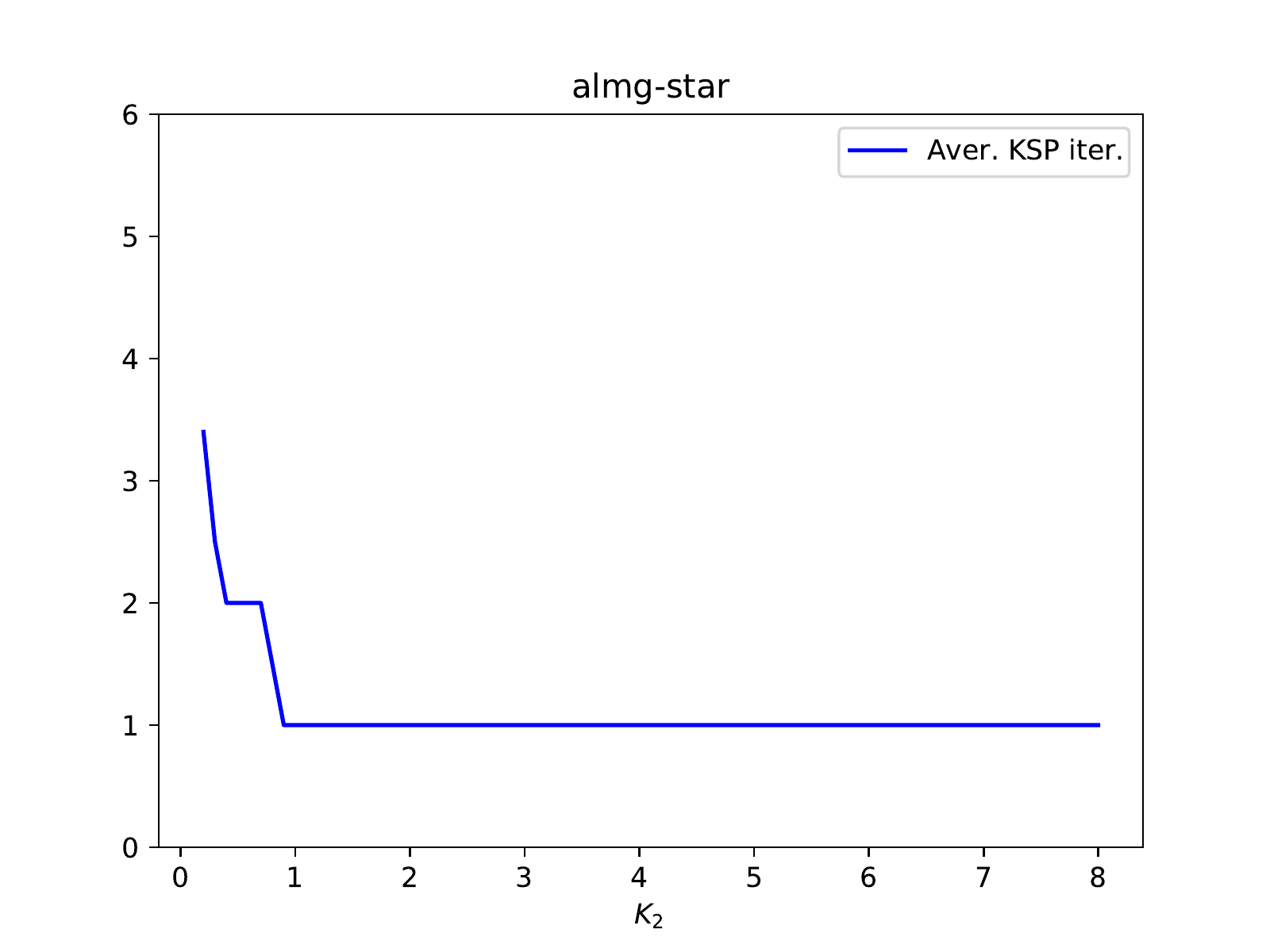}
    \end{minipage}\hfill
    \begin{minipage}{0.5\textwidth}
        \centering
        \includegraphics[width=1\textwidth]{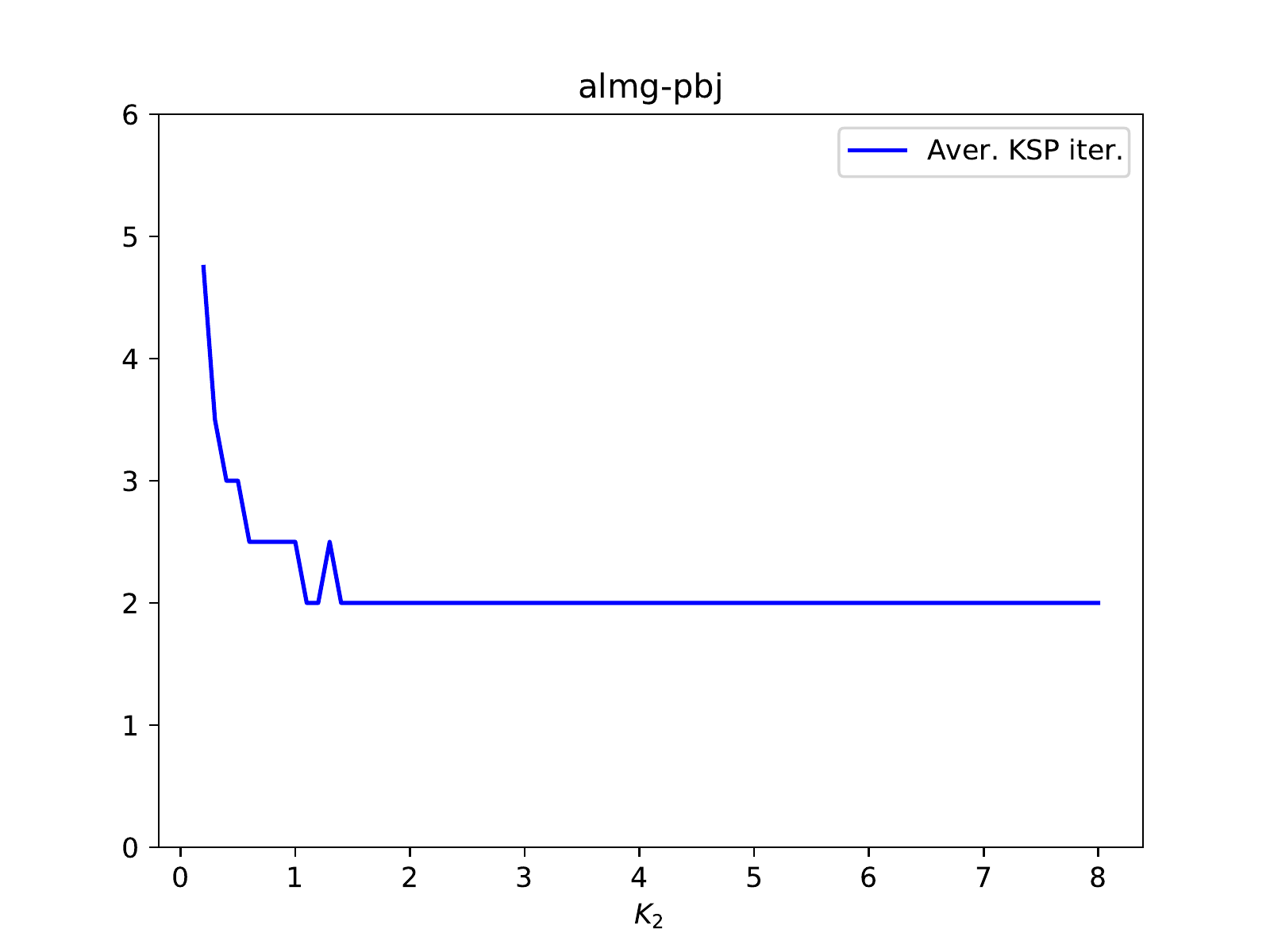}
    \end{minipage}\hfill
    \caption{Average number of FGMRES iterations per Newton iteration when continuing in $K_2$ for the LC problem in a square slab.}
    \label{fig:k2}
\end{figure}

\begin{figure}[!ht]
    \centering
    \begin{minipage}{0.5\textwidth}
        \centering
        \includegraphics[width=1\textwidth]{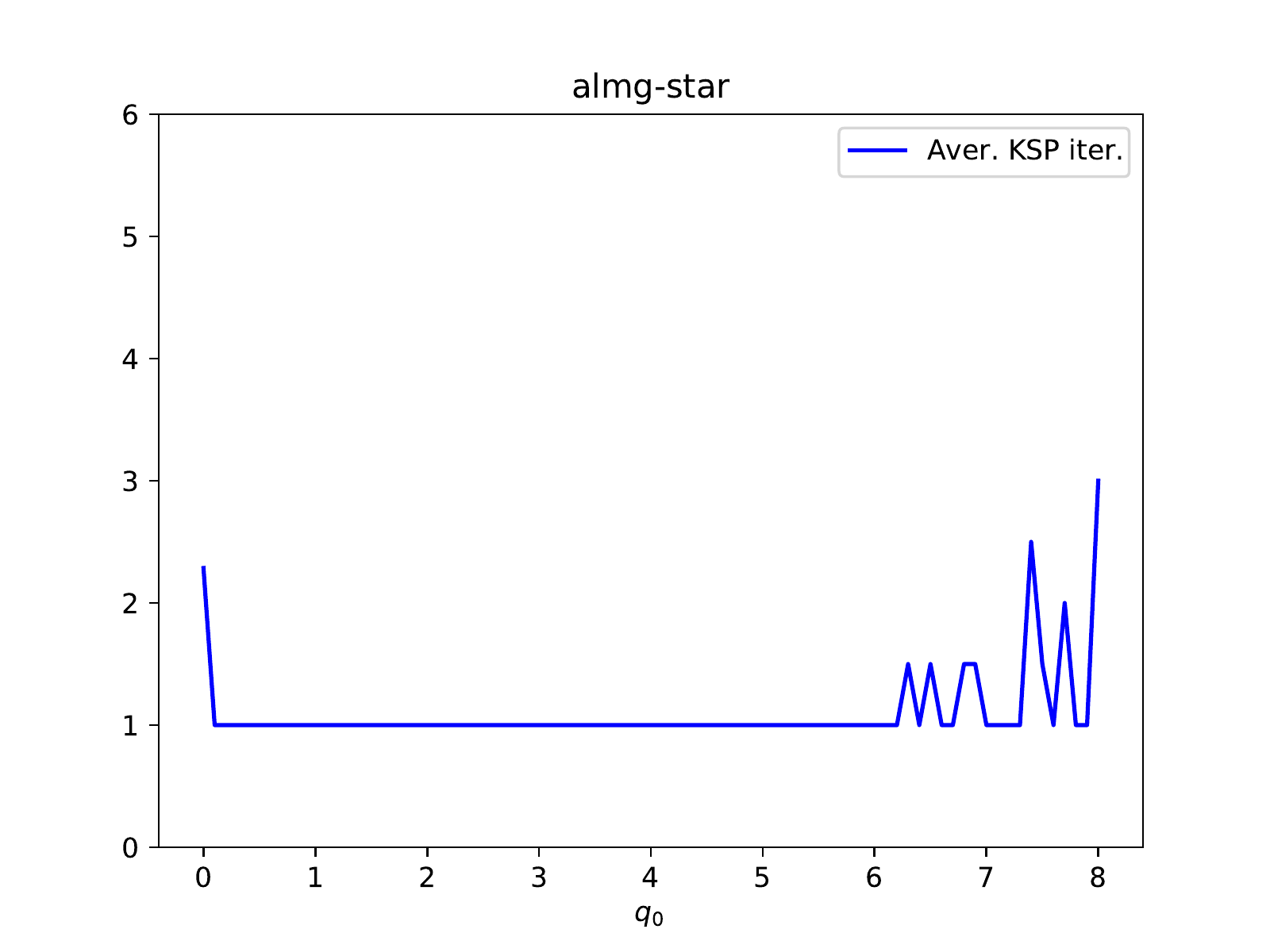}
    \end{minipage}\hfill
    \begin{minipage}{0.5\textwidth}
        \centering
        \includegraphics[width=1\textwidth]{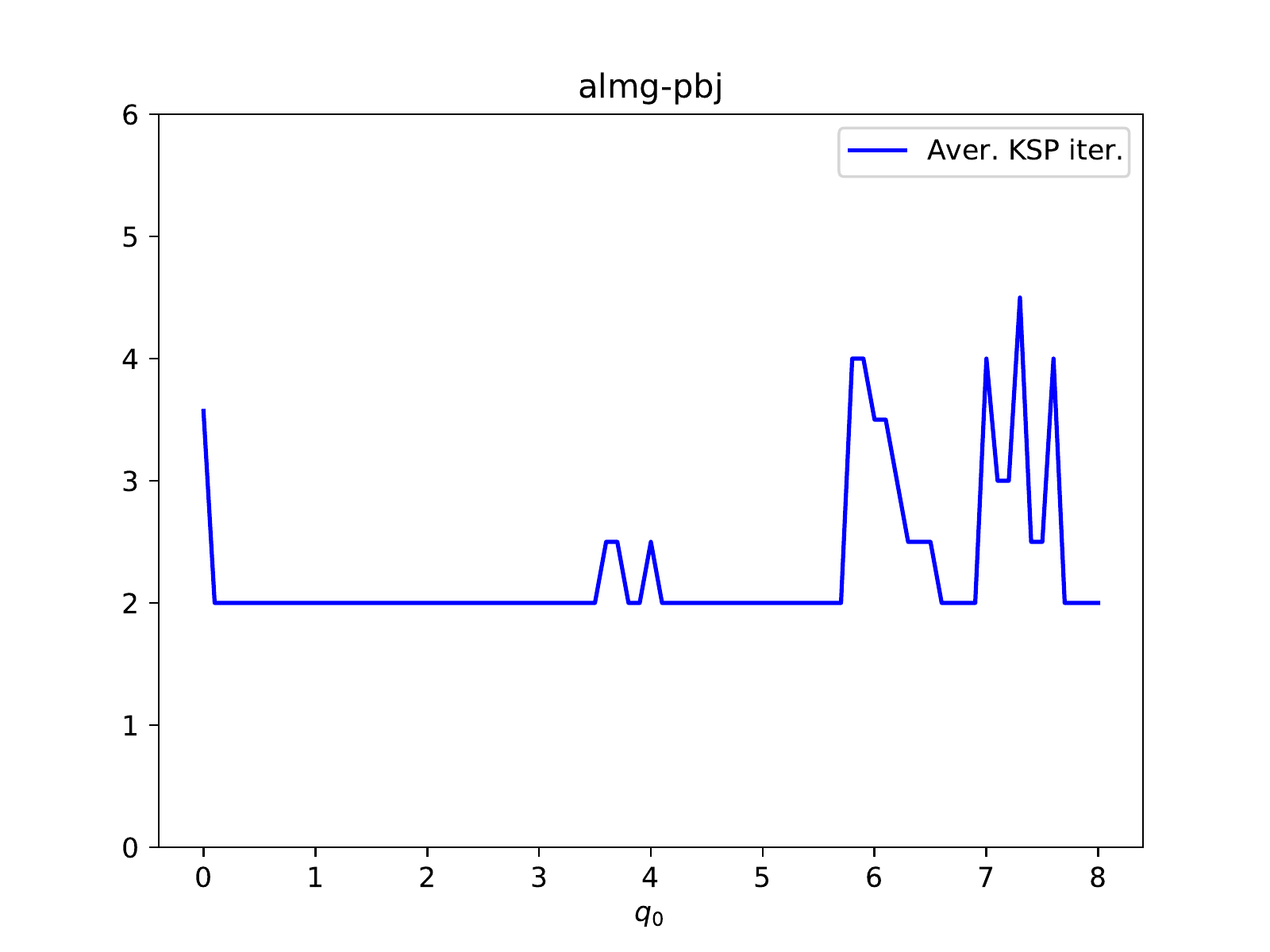}
    \end{minipage}\hfill
    \caption{Average number of FGMRES iterations per Newton iteration when continuing in $q_0$ for the LC problem in a square slab.}
    \label{fig:q0}
\end{figure}

To examine the convergence order of the discretization as a function of $\gamma$, we apply the ALMG-PBJ solver for $\gamma=10^4, 10^5$ and $10^6$.
Note that the convergence result does not rely on the solver used.
Figure \ref{fig:error-pbj} shows the $L^2$- and $H^1$-error between the computed director and the known analytical solution.
We observe third order convergence of the director in the $L^2$ norm and second order convergence in the $H^1$ norm for all values of $\gamma$ considered.
\begin{figure}[!ht]
    \centering
    \begin{minipage}{0.5\textwidth}
        \centering
        \includegraphics[width=1\textwidth]{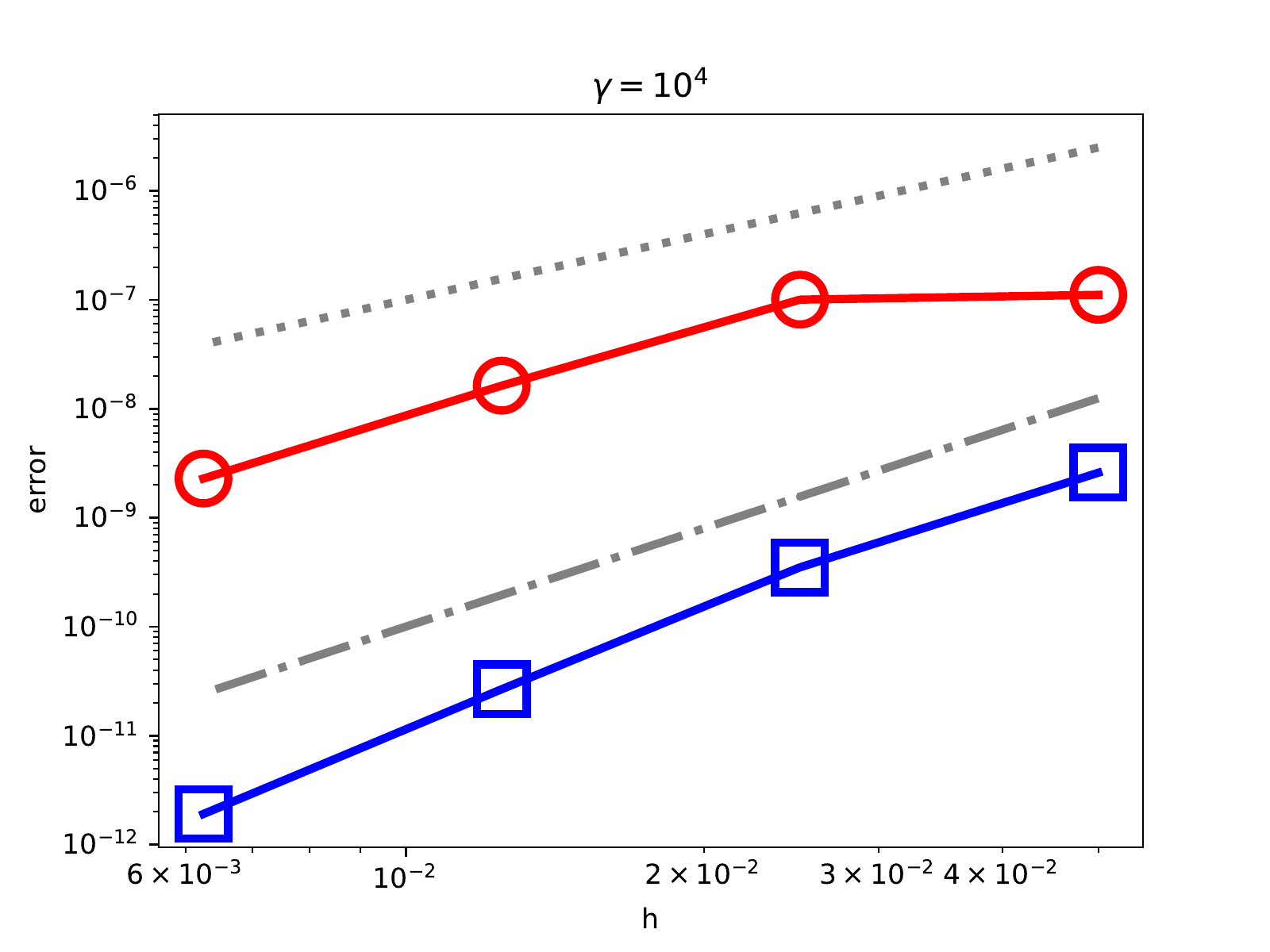}\\
    \end{minipage}\hfill
    \begin{minipage}{0.5\textwidth}
        \centering
        \includegraphics[width=1\textwidth]{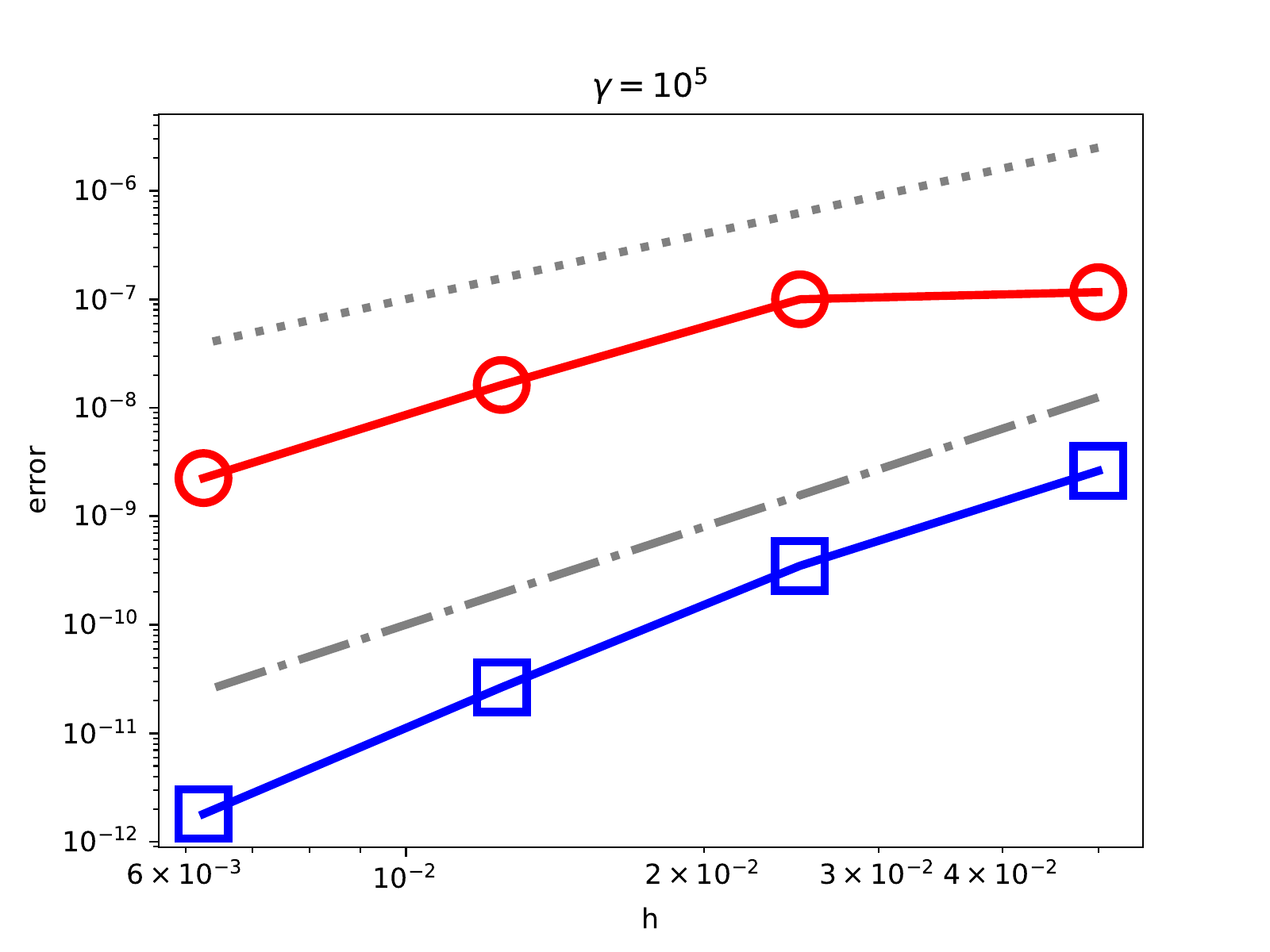}\\
    \end{minipage}\hfill
    \begin{minipage}{0.5\textwidth}
        \centering
        \includegraphics[width=1\textwidth]{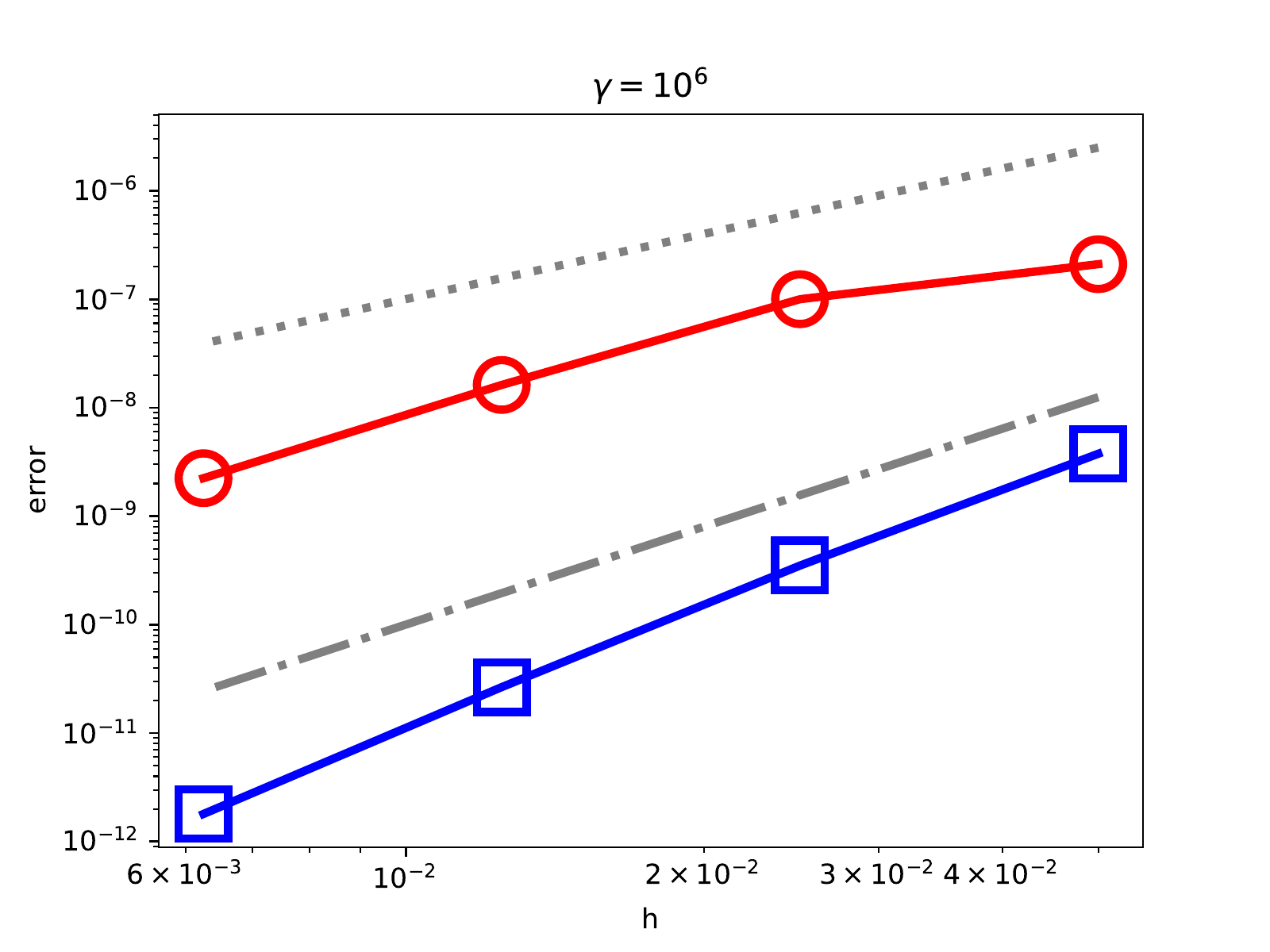}
    \end{minipage}\hfill
    \begin{minipage}{0.5\textwidth}
        \centering
        \includegraphics[width=1\textwidth, scale=1, trim={5cm 4cm 5cm 4cm}, clip]{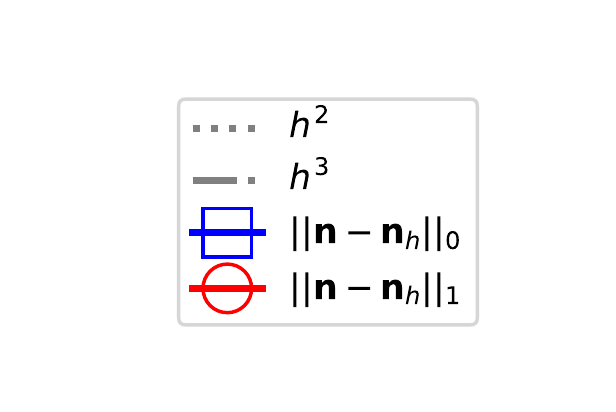}
    \end{minipage}\hfill
    \caption{The convergence of the computed director as the mesh is refined for the nematic LC problem in a square slab.}
    \label{fig:error-pbj}
\end{figure}

To investigate the computational efficiency of the AL approach,
we compare our proposed AL-based solvers (ALMG-PBJ and ALMG-STAR) with a monolithic multigrid preconditioner using Vanka relaxation \cite{adler-2015b-article, vanka-1986-article} on each level (denoted as MGVANKA) in Table \ref{table:runtime}.
Essentially, MGVANKA applies multigrid to the coupled director-multiplier problem, with an additive Schwarz relaxation organised around gathering all director dofs coupled to a given multiplier dof.
All results are computed in serial.
In our experiments, these two AL-based solvers outperform MGVANKA even for small problems of about five thousand dofs.
In particular, ALMG-PBJ is the fastest method considered and is approximately five times faster than MGVANKA for a problem with about five million dofs.
We also notice that ALMG-STAR is slower than ALMG-PBJ, which is caused by the size of the star patch being larger than that of the point-block patch, requiring more work in the multigrid relaxation.

\begin{table}[!ht]
\centering
    \begin{tabular}{lllllll}
    \toprule
    \multicolumn{7}{c}{Computing time (in minutes)}\\
        \midrule
        \#refs & 1 & 2 & 3 & 4 & 5 & 6\\
        \#dofs & 5,340 & 21,080 & 83,760 & 333,920 & 1,333,440 & 5,329,280\\
        ALMG-PBJ & 0.02 & 0.04 & 0.09 & 0.32 & 1.17 & 5.53\\
        ALMG-STAR & 0.02 & 0.07 & 0.23 & 0.79 & 2.95 & 12.86\\
        MGVANKA & 0.04 & 0.15 & 0.38 & 1.44 & 5.91 & 25.09\\
        \bottomrule
    \end{tabular}
    \caption{The computing time of ALMG-PBJ, ALMG-STAR and MGVANKA as a function of mesh refinement for the nematic LC problem in a square slab.}
\label{table:runtime}
\end{table}

\subsubsection{Equal-constant nematic case in an ellipse}

Consider an ellipse of aspect ratio ${3}/{2}$ with strong anchoring boundary condition $\mathbf{n} = [0, 0, 1]^\top$ imposed on the entire boundary.
We consider the equal-constant nematic case $K_1=K_2=K_3=1$, $q_0=0$ to verify the theoretical results presented in previous sections with corresponding discretizations.
We use the initial guess $\mathbf{n}_0 = [0, 0, 0.8]^\top$ in the nonlinear iteration.
The coarsest triangulation, generated in Gmsh \cite{gmsh}, is illustrated in Figure \ref{fig:ellipse}.

\begin{figure}[!ht]
    \centering
    \includegraphics[width=0.5\textwidth, trim={5cm 6cm 5cm 6cm}, clip]{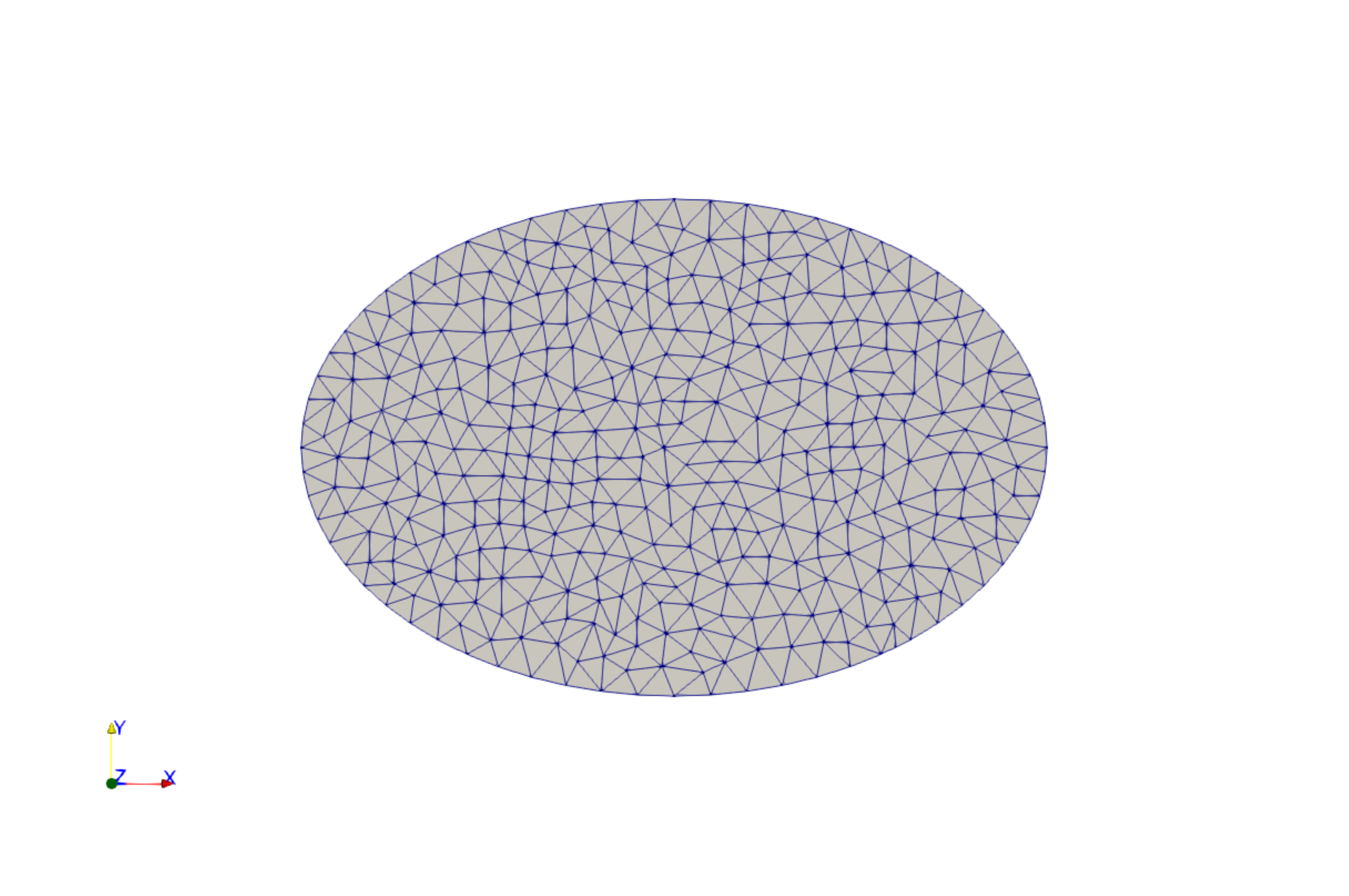}
    \caption{The coarse mesh of the ellipse.}
    \label{fig:ellipse}
\end{figure}

To verify our theoretical results on the improvement of the discrete enforcement of the constraint in Section \ref{sec:improv},
we vary the penalty parameter $\gamma$, use one refinement for the fine mesh, and employ the $[\mathbb{P}_1]^3$-$\mathbb{P}_1$ element.
The data is plotted in Figure \ref{fig:improv}.
The $L^2$-norm $\|\mathbf{n}\cdot\mathbf{n}-1\|_0$ of the residual of the constraint decreases as $\gamma$ grows,
and scales like $\mathcal{O}(\gamma^{-1/2})$ as expected.

\begin{figure}[!ht]
    \centering
    \includegraphics[width=0.5\textwidth]{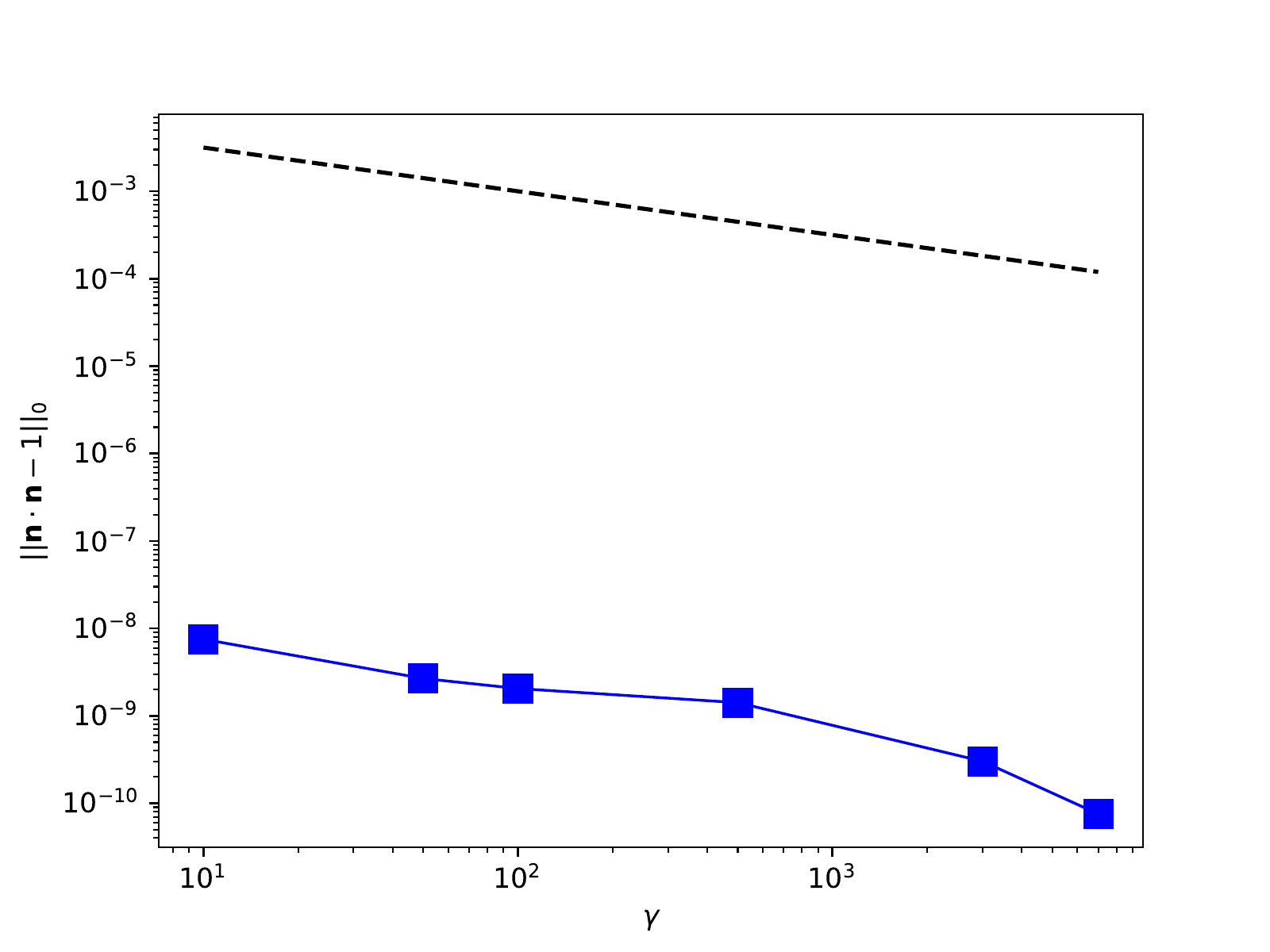}
    \caption{Comparison of the computed constraint $\|\mathbf{n}\cdot\mathbf{n}-1\|_0$ and the reference line $\mathcal{O}(\gamma^{-1/2})$
    using the $[\mathbb{P}_1]^3$-$\mathbb{P}_1$ finite element pair for equal-constant nematic LC problems in an ellipse.}
    \label{fig:improv}
\end{figure}

The efficiency of the Schur complement approximation of Section \ref{sec:schur} for the $[\mathbb{P}_2]^3$-$\mathbb{P}_1$ element can be observed in Table \ref{table:allu-ell}.

\begin{table}[!ht]
\centering
    \begin{tabular}{crllllllll}
        \toprule
         &  & \multicolumn{8}{c}{$\gamma$} \\ \cmidrule(lr){3-10}
        \#refs & \#dofs & 0 & 1 & 10 & $10^2$ & $10^3$ & $10^4$ & $10^5$ & $10^6$ \\
         \midrule
        1 & 19,933 & 29.20 & 25.60 & 16.40 & 5.20 & 2.60 & 1.60 & 1.33 & 1.14 \\
        2 & 78,810 & 32.50 & 26.00 & 14.00 & 6.80 & 3.40 & 1.80 & 1.33 & 1.17 \\
        3 & 313,408 & 12.50 & 15.50 & 16.25 & 7.60 & 4.20 & 2.20 & 1.33 & 1.17 \\
        4 & 1,249,980 & 11.00 & 12.25 & 14.75 & 8.40 & 4.80 & 2.60 & 1.40 & 1.17 \\
        5 & 4,992,628 & 12.33 & 13.33 & 11.75 & 8.00 & 5.20 & 3.00 & 1.50 & 1.14 \\
        \bottomrule
    \end{tabular}
    \caption{ALLU: The average number of FGMRES iterations per Newton iteration for an equal-constant nematic problem in an ellipse using $[\mathbb{P}_2]^3$-$\mathbb{P}_1$ discretization.}
    \label{table:allu-ell}
\end{table}

Tables \ref{table:almg/star-equal} and \ref{table:almg/pbj-equal} demonstrate
the robustness of ALMG-STAR and ALMG-PBJ with respect to $\gamma$ and mesh refinement
for the $[\mathbb{P}_2]^3$-$\mathbb{P}_1$ element.
It can be seen that both solvers are robust with respect to the penalty parameter $\gamma$,
and with respect to the mesh size $h$ for $\gamma=10^6$.
The number of nonlinear iterations and the number of FGMRES iterations per Newton step remain
stable.

\begin{table}[!ht]
\centering
    \begin{tabular}{crllll}
        \toprule
         &  & \multicolumn{4}{c}{$\gamma$} \\ \cmidrule(lr){3-6}
         \#refs & \#dofs & $10^3$ & $10^4$ & $10^5$ & $10^6$\\
         \midrule
         1 & 19,933 & 2.60 (5) & 1.60 (5) & 1.80 (5) & 1.67 (6)\\
         2 & 78,810 & 4.40 (5) & 1.80 (5) & 1.60 (5) & 1.50 (6)\\
         3 & 313,408 & 6.80 (5) & 3.20 (5) & 1.50 (6) & 1.50 (6)\\
        4 & 1,249,980 & 10.00 (5) & 4.67 (6) & 1.80 (5) & 1.50 (6)\\
        5 & 4,992,628 & 14.40 (5) & 7.50 (6) & 4.20 (5) & 1.33 (6)\\
        \bottomrule
    \end{tabular}
    \caption{ALMG-STAR: the average number of FGMRES iterations per Newton iteration (total Newton iterations) for equal-constant nematic problem in an ellipse using $[\mathbb{P}_2]^3$-$\mathbb{P}_1$ discretization.}
\label{table:almg/star-equal}
\end{table}

\begin{table}[!ht]
\centering
    \begin{tabular}{crllll}
        \toprule
         &  & \multicolumn{4}{c}{$\gamma$} \\ \cmidrule(lr){3-6}
         \#refs & \#dofs & $10^3$ & $10^4$ & $10^5$ & $10^6$\\
         \midrule
         1 &19,933 & 3.80 (5) & 2.60 (5) & 2.60 (5) & 2.80 (5)\\
         2 & 78,810 & 6.80 (5) & 3.20 (5) & 2.60 (5) & 2.60 (5)\\
         3 & 313,408 & 9.00 (5) & 5.00 (5) & 2.60 (5) & 2.60 (5)\\
        4 & 1,249,980 & 14.80 (5) & 8.20 (5) & 3.80 (5) & 2.40 (5)\\
        5 & 4,992,628 & 19.00 (5) & 11.60 (5) & 6.80 (5) & 2.50 (6)\\
        \bottomrule
    \end{tabular}
    \caption{ALMG-PBJ: the average number of FGMRES iterations per Newton iteration (total Newton iterations) for equal-constant nematic problem in an ellipse using $[\mathbb{P}_2]^3$-$\mathbb{P}_1$ discretization.}
\label{table:almg/pbj-equal}
\end{table}

\textbf{Code availability.} For reproducibility, both the solver code \cite{zenodo-alpaper} and the exact version of Firedrake used \cite{zenodo-firedrake-20201106} to produce the numerical results of this paper have been archived on Zenodo. An installation of Firedrake with components matching those used in this paper can be obtained by following the instructions at \url{https://www.firedrakeproject.org/download.html} with
\begin{verbatim}
    python3 firedrake-install --doi 10.5281/zenodo.4249051
\end{verbatim}

\section{Conclusions}\label{sec:conclusion}

The results in this paper divide into two categories: results about the
Oseen--Frank model and its discretization, and results about the augmented Lagrangian method
for solving it.
For the former, we extended the well-posedness results of
\cite{adler-2015-article} for nematic problems to the cholesteric case.
We also showed that the Schur complement of the discretized system is spectrally
equivalent to the Lagrange multiplier mass matrix. For the latter, we showed that
the AL method improves the discrete enforcement of the constraint,
and devised a parameter-robust multigrid scheme for the augmented director
block. The key point in this is to capture the kernel of the semi-definite
augmentation term in the multigrid relaxation. Numerical experiments
validate the results and indicate that the proposed scheme outperforms
existing monolithic multigrid methods.

\bibliographystyle{spmpsci}      
\bibliography{references}
\end{document}